\numberwithin{equation}{section}
\author[1]{Mathieu \sc{Merle}}
\author[2]{Raoul \sc{Normand}}
\email{merle@math.univ-paris-diderot.fr; rnormand@math.sinica.edu.tw}
\title{Self-organized criticality in a discrete model for Smoluchowski's equation with limited aggregations}
\newcommand{\cinf}{c_{\infty}}
\newcommand{\taugel}{\tau_{\mathrm{gel}}}
\newcommand{\GW}{\bbG\W}
\renewcommand{\N}{^{(N)}}
\newcommand{\n}{^{(n)}}
\newcommand{\MNf}{M^{(N),f}}
\newcommand{\An}{A^{(n)}}
\newcommand{\gel}{\mathrm{gel}}
\newcommand{\Tgel}{T_{\gel}}
\newcommand{\bN}{\mathbb{N}}
\renewcommand{\cE}{{\mathcal E}}
\renewcommand{\cF}{{\mathcal F}}
\renewcommand{\cG}{{\mathcal G}}
\newcommand{\beq}{\begin{eqnarray*}}
\newcommand{\feq}{\end{eqnarray*}}
\newcommand{\beqn}{\begin{eqnarray}}
\newcommand{\feqn}{\end{eqnarray}}
\newtheorem{assume}[thm]{Assumption}
\newcommand{\Bin}{\mathrm{Bin}}
\renewcommand{\d}[2]{\dfrac{\partial #1}{\partial #2}}
\renewcommand{\fm}{\mathfrak{m}}
\newcommand{\tK}{\widetilde{\K}}
\newcommand{\Ctyp}{\cC_{\mathrm{typ}}}
\begin{document}
\maketitle 

\abstract{
We introduce and study a discrete random model for Smoluchowski's equation with limited aggregations. The latter is a model of coagulation introduced by Bertoin which may exhibit gelation. 

In our model, a large number of particles are initially given a prescribed number of arms. These arms are activated independently after exponential times and successively linked together. However, when the size of a cluster goes above a fixed threshold, it falls instantaneously into the gel, meaning that it no longer interacts with the other particles in solution.

The concentrations in this model asymptotically obey Smoluchowski's equation with limited aggregations. In this article, we study the discrete features of this model. We are able to argue that it remains closely related with a configuration model. We specifically obtain explicit expressions for the parameters of this configuration model, which show that it is subcritical before gelation, but remains critical afterwards. As a consequence, the asymptotic distribution of a typical cluster in solution is that of a subcritical Galton-Watson tree before gelation, while it is that of a critical Galton-Watson tree after gelation. Our model therefore exhibits self-organized criticality.

Our study relies heavily on the study of the configuration model in the critical window. One of the main results of the paper is an extension of a result of Janson and Luczak on the size of the largest components in this critical window.

Finally, as a consequence of our explicit expressions, we provide an explanation of analytic formulas established by Normand and Zambotti regarding the limiting concentrations in Smoluchowski's equation.
}

\keywords Configuration model, gelation, hydrodynamic limit, limited aggregations, self-organized critica\-lity, Smoluchowski's coagulation equation

\MSC Primary: 60K35; Secondary: 05C80

\section{Introduction} \label{sec:intro}

\subsection{Smoluchowski's equation}

The standard Smoluchowski's equation was first introduced in 1916 in \cite{Smolu}, to model coalescing particles. As time passes, clusters of particles are created. One assumes that each particle has unit mass, so a cluster has mass $m$, the number of particles it contains. As time passes, particles, and hence clusters, coalesce pairwise. The phenomenon is characterized by a symmetric kernel $\k(m,m')$, modeling the ``rate'' at which two clusters of mass $m$ and $m'$ coalesce. When they do, a cluster of mass $m + m'$ is formed. Then, the evolution of the concentration $c_t(m)$ of clusters of mass $m$ is given by Smoluchowski's equation, actually an infinite system of nonlinear ODEs,
\begin{equation} \label{eq:smolunoarm} 
\dfdt{} c_t(m) = \frac12 \sum_{m'=1}^{m-1} \k(m,m') c_t(m') c_t(m-m') - \sum_{m'\geq 1} \k(m,m') c_t(m) c_t(m'),
\end{equation}
for $m \geq 1$.

The behavior of this equation depends heavily on the kernel $\k$. Three kernels are of particular interests, namely the constant, additive $\k(m,m') = m + m'$ and multiplicative $\k(m,m') = m m'$ kernels. For the first two kernels, the system is quite easy to solve, for instance by considering a PDE involving the generating function of $(c_t(m))$. For the multiplicative kernel, three facts have been known for a long time. First, there is a unique solution on $[0,T)$, where $T = (\sum_{m \geq 1} m^2 c_t(m))^{-1}$, and it is mass-conservative, in that $\sum_{m \geq 1} m c_t(m)$ remains constant \cite{McLeod}. Secondly, if there is actually a solution on a larger interval, then the mass has to decrease after time $T$. This phenomenon is called \textit{gelation}. Physically, it is interpreted as the appearance of {\em infinite-size clusters} accounting for a positive fraction of the total mass. Finally, Kokholm \cite{Kokholm} has shown, for monodisperse initial conditions $c_t(m) = \unn{m=1}$, that there exists a unique solution for all $t \geq 0$, given explicitly for $m \geq 1$ by
\begin{equation} \label{eq:solsmolunoarm}
c_t(m) =
\begin{cases}
m^{-1} B(t,m) & t \leq 1, \\
(mt)^{-1} B(1,m) & t \geq 1,
\end{cases}
\end{equation}
where
\[
B(\l,m) = (\l m)^{m-1} \frac{1}{m!} e^{- \l m}.
\]
The function $B(\l,\cdot)$ is a probability distribution for $\l \leq 1$, called the Borel distribution. It is the law of the total progeny of a Galton-Watson (GW) process with reproduction law Poisson($\l$), which we denote $PGW(\l)$.

This shows in particular that the total mass $\sum_{m \geq 1} m c_t(m)$ is conserved up to time 1, but afterwards decreases and is equal to $1/t$. Moreover, the proportion of particles which are in clusters of mass $m$ is exactly the probability for a $PGW(t)$ to have size $m$, when $t \leq 1$. On the other hand, when $t > 1$, the proportion of particles in clusters of mass $m$ matches the probability for a $PGW(1)$ to have size $m$. Since a $PGW(\l)$ is subcritical for $\l < 1$, and critical for $\l = 1$, this already suggests that the model exhibits a form of \emph{self-organized criticality} (SOC). Note also that the tail $\sum_{m \geq k} m c_t(m)$ decays exponentially with $k$ before gelation, but as $k^{-1/2}$ afterwards.

The proof of existence and uniqueness to Smoluchowski's equation with a multiplicative kernel and {\em general initial conditions} has only recently been given in \cite{NZ}, and independently in \cite{Rath}. In the latter, it is also shown (for initial conditions with bounded support) that SOC occurs, in the sense described above that the tails of the total mass decay as $k^{-1/2}$ after gelation.

SOC is well-known in the Physics literature, but is seldom proved mathematically. Some models are however known rigorously to exhibit SOC, such as Abelian sandpiles \cite{JaraiAAM}, invasion percolation on regular trees \cite{AngelIPRT}, and Curie-Weiss models \cite{CerfGorny}. 

Though the underlying idea in Smoluchowski's equation is essentially discrete, all that we just said applies to the continuous equation, which is the natural mean-field model to consider. To obtain a discrete picture, we introduced in \cite{MN0} a discrete model for Smoluchowski's equation, described as follows. Start with $N$ particles, and give independent exponential clocks with parameter $1/N$ to each pair of particles. When a clock rings, form the corresponding link between the two particles, {\em as long as they both belong to clusters with sizes below $\a(N)$}, where $(\a(N))$ is some threshold. This threshold should be thought of as when particles become too large and ``fall into the gel'', that is, become inactive. This model is directly inspired by the work of Fournier and Lauren\c{c}ot \cite{FournierMLP}, who first proved that, when the threshold satisfies
\begin{equation} \label{eq:alpha0}
1 \ll \a(N) \ll N, 
\end{equation}
then the concentration of particles of size $m$ converges to the solution \eqref{eq:solsmolunoarm} to Smoluchowski's equation \eqref{eq:smolunoarm}. To be precise, they only consider the concentrations as a jump Markov process, and prove convergence up to a subsequence, since  their paper is anterior to \cite{NZ} and \cite{Rath}. Their result is actually not limited to the multiplicative kernel $\k(m,m') = m m'$, but it is the only one which both exhibits gelation and is described by a truly discrete model as above. R\'ath in \cite{Rath} studies a slightly different yet closely related model, and obtains the same convergence result.

The threshold $(\a(N))$ above which clusters become inactive is fundamental. Informally, Smoluchowski's equation \eqref{eq:smolunoarm} shows no interaction with clusters of infinite mass, so that it should also be the case in a relevant discrete model. In particular, there is also a convergence result for the more simple discrete model with no threshold $(\a(N))$, i.e. when any cluster can interact with any other cluster, see \cite{FournierMLP}. The convergence is then to \emph{Flory's equation}, which is \eqref{eq:smolunoarm}, but with an extra term taking into account interactions with clusters of infinite mass. As it stems from a more simple discrete model, this equation is also easier to study than \eqref{eq:smolunoarm}, see e.g. \cite{NZ} and references therein.

Let us finally describe more precisely the results of \cite{MN0}. The main interest of that model, compared to that of Fournier and Lauren\c{c}ot, is that it retains much more information, in particular the graph structure, if we think of particles as vertices and coagulations as edges being created. Therefore, it allows to study the \emph{shape} of a typical cluster. Under stronger assumptions on the threshold, \cite{MN0} shows that the \emph{shape} a typical cluster converges to  that of a $PGW(t)$ tree for $t \leq 1$, and $PGW(1)$ for $t \geq 1$. In particular, this tree is subcritical before time 1, and critical afterwards, which is a far more striking way to explain SOC. As an aside, this also provides a better explanation for Formula \eqref{eq:solsmolunoarm}, and a way to recover it. Indeed, the concentration of clusters of mass $m$ is exactly the probability that a particle chosen uniformly at random is in an cluster of size $m$, times $1/m$, because of size-bias. From the result just mentioned, this probability converges to the probability that a $PGW(t \wedge 1)$ has size $m$, that is $B(t \wedge 1,m)$, in agreement with Formula \eqref{eq:solsmolunoarm}. See Sections \ref{sec:locconv} and \ref{sec:limconc} for similar reasonings and more details.

\subsection{Smoluchowski's equation with limited aggregations}
 
Physically, one may want to think of a coagulation involving two given particles as the creation of a covalent bond, so that it makes sense to assume that each particle can a priori only form a certain finite number of such bonds. Alternatively, there could be several types of particles, some being able to create more bonds than others. Therefore, it seems reasonable to initially give each particle a certain number of arms, representing the number of covalent bonds it may create. Two arms are used to create a bond, or link, and cannot be reused again. Then, a cluster is characterized by its mass $m$ and its number of free arms $a$, i.e. the total number of arms in that cluster which are not yet bound. This model {\em with limited aggregations} has first been considered by Bertoin in \cite{BertoinTSS}, and gives the evolution of the concentration $c_t(a,m)$ of clusters with $a$ free arms and mass $m$. It is given by an equation resembling Smoluchowski's equation \eqref{eq:smolunoarm}, which we now describe.

Let $S = \bN \times \bN^*$, and for $p=(a,m), p'=(a',m') \in S$,  define $p \cdot p' = a a'$ and $p \circ p' = (a+a'-2, m+m')$. We write $p' \lesssim p$ if $a' \leq a+1$ and $m' \leq m-1$. For $p' \lesssim p$, we define $p \bsl p' = (a+2-a', m-m')$. Then \emph{Smoluchowski's equation with limited aggregations} is given for $p \in S$ by
\begin{equation} \label{eq:smolu0}
\dfdt{} c_t(p)  = \frac12 \sum_{p' \lesssim p} p' \cdot (p \bsl p') c_t(p') c_t(p \bsl p') - \sum_{p' \in S}  p \cdot p' c_t(p) c_t(p').
\end{equation}
The first term in the RHS accounts for the appearance of $(a,m)$-clusters, by coagulation of $(a',m')$-clusters with $(a+2-a', m-m')$-clusters. The second term accounts for the disappearance of $(a,m)$-clusters by coagulation with any other cluster.  

In \cite{BertoinTSS}, Bertoin considers monodisperse initial conditions $c_0(a,m) = \mu'(a) \unn{m=1}$ for a measure\footnote{The prime is to differentiate Bertoin's convention and ours, where he normalizes $\mu$ to have unit moment, whereas we normalize it to be a probability, and we will then drop the prime.} $\mu'$ on $\bN$ with unit mean and a second moment, and such that $\mu'(0) + \mu'(2) \neq 1$. Denote $\g' = \sum_{a \geq 1} a (a-2) \mu'(a)$. Bertoin then shows that \eqref{eq:smolu0} has a unique solution up to time $\Tgel^0$, interpreted as the gelation time, with  
\begin{equation} \label{eq:Tgel0}
\Tgel^0 =
\begin{cases}
\pinf, & \text{if $\g \leq 0$} \\
1/\g', & \text{if $\g > 0$.}
\end{cases}
\end{equation}
More recently, Normand and Zambotti have shown in \cite{NZ}, using analytic techniques, that for general initial conditions (and in particular even when $\g' > 0$), Equation \eqref{eq:smolu0} has a unique solution on the whole of $\R^+$. They also provide explicit solutions; we will come back to this shortly.

One very interesting formula of \cite{BertoinTSS} is the following. Denote $\nu(m) = (m+1) \mu'(m+1)$, which is a probability measure on $\bN$. Then, provided $\g' \leq 0$, there are limits to the concentrations, given for $m \geq 2$ and $a \geq 0$ by
\begin{equation} \label{eq:cinf0}
\cinf(a,m) = \unn{a=0} \frac{1}{m(m-1)} \nu^{*m}(m-2),
\end{equation}
where $\nu^{*m}$ is the $m$-th convolution product of $\nu$. One part of this formula is clear: at the end, all the arms have been used up. To provide an explanation for $\cinf(0,m)$, Bertoin and Sidoravicius \cite{BerSid} study a microscopic model for these limiting concentrations. Without dwelling on technical details, they consider $N$ particles, with an empirical distribution of arms $\mu\N$ converging (weakly and in mean) to $\mu'/\mu'(\bN)$. They then consider a uniform pairing of these arms. Then, in the case when $\g' \leq 0$, they show that, when an arm is selected uniformly at random, the distribution of the cluster to which it belongs converges to that of GW tree started from 2 individuals, and with reproduction law $\nu$. This allows to give a probabilistic interpretation of Formula \eqref{eq:cinf0}, see \cite{BerSid}. We will provide another interpretation of \eqref{eq:cinf0} in Section \ref{sec:locconv}, based on the configuration model and which is more in the flavor of this work.

Let us now explain what happens when gelation does occurs, i.e. in the case when $\g' > 0$. Write $G_{\nu}$ for the generating function of $\nu$, and assume $\nu(0) > 0$. Then it is easy to see that the equation
\begin{equation} \label{eq:c}
c G_{\nu}'(c) = G_{\nu}(c)
\end{equation}
has a unique solution $c \in (0,1)$, and that $\b = 1 / G_{\nu}'(c) > 1$. In \cite{NZ}, it is then proved that, for all $m \geq 2$ and $a \geq 0$,
\begin{equation} \label{eq:cinf}
\cinf(a,m)= \unn{a=0} \frac{1}{m(m-1)} \b^{m-1} \nu^{*m}(m-2).
\end{equation}
This is obviously extremely similar to \eqref{eq:cinf0}, up to the factor $\b^{m-1}$, which seems to favor larger clusters. A question left open in \cite{NZ} is to give a probabilistic interpretation for this formula. We propose to answer this question, and more importantly, study in details a microscopic model for \eqref{eq:smolu0}, which, similarly to the model of \cite{MN0}, exhibits SOC.

A final result in \cite{BerSid} is a dynamic model for \eqref{eq:smolu0}, where each pair of free arms is linked at rate $1/N$. The authors then show that the concentrations of clusters with mass $m$ and $a$ free arms at time $t$ converges to the solution $c_t(a,m)$ of \eqref{eq:smolu0}. Of course, its final state corresponds to a uniform pairing of the arms, in accordance with the reasonings above. This is the natural discrete model underlying \eqref{eq:smolu0}, but, as should be clear from the previous section, it is not quite the right model when gelation occurs.

\subsection{Model}

Considering that we already explained the models of \cite{BerSid} and \cite{MN0}, the model that we are now going to present should not come as a surprise. So let us start initially with $N$ particles, each being equipped with a certain (deterministic) number of arms. We denote $\mu\N$ the \textbf{degree distribution}, that is, there are $N \mu\N(r)$ particles with $r$ arms for every $r \in \bN$. We also fix a threshold $\a(N)$. As time goes by, arms will bind to create clusters. We shall call a cluster \textbf{small} or \textbf{in solution} if it has size less than $\a(N)$, and \textbf{large}\footnote{We use \emph{large} in contrast to \emph{giant}, which is the term usually used in the Random Graphs literature to describe clusters of size of order $N$.} or \textbf{in the gel} if it has size $\a(N)$ or greater. Similarly, we will say that a particle or an arm is in solution if it belongs to a small cluster, and otherwise that it is in the gel. We also say that an arm is \textbf{free} if it is not linked to another arm, so that they are all free at time 0.

Now, each arm is given independently an exponential clock with parameter one $\cE(1)$. When a clock rings, the corresponding arm is made \textbf{active}. If, at this instant, there is another \emph{active} and \emph{free} arm \emph{in solution}, the two arms are linked together to form a bond between the corresponding particles (and are then no more free). Otherwise, nothing else happens.

Note that there can only be zero or one active arm in solution. Another way to see this model is to wait for an arm to activate, then for the next one, and bind them. Continue with the third and fourth activated arms, and so on, until \textbf{gelation} occurs, that is, a large cluster is created. This cluster then precipitates, or ``falls into the gel'', which is just a metaphor to mean that it becomes inactive, and it cannot bind to any more particle. The process continues similarly on the particles in solution, until another cluster falls into the gel, and so on.

The reader might notice two differences with the dynamic version of the model of Bertoin and Sidoravicius, which we explained above. First, of course, there is the appearance of the threshold $\a(N)$, which is fundamental, as for \eqref{eq:smolunoarm}. Without it we would get convergence to the corresponding Flory equation discussed in \cite{NZ}. On the other hand, we put clocks on arms, not on pair of arms. It is clear that the only difference it creates is a change of time, but the underlying Markov chain is the same, since, in both cases, a coagulation occurs by picking a pair of free arms in solution uniformly at random. We choose this model because the microscopic evolution is more clear. Indeed, to know what happens when a clock rings, we just need to check the state of one arm. Otherwise, we need to check the state of both arms in the link, which leads to some cumbersome complications.  As expected, and as we shall see in Section \ref{sec:smolu}, the convergence of the concentrations will be to a \emph{modified} Smoluchwoski equation with limited aggregations, which is just a time-change of \eqref{eq:smolu0}.

\subsection{Configuration model}

Consider the easier version of our model, where we also set i.i.d. $\cE(1)$ clocks on the arms, but there is no threshold $\a(N)$. Hence, an arm is activated when the corresponding clock rings, and at this time, if there exists another free active arm, we bind these two arms together. In other words, we bind the first and second activated arms, the third and fourth activated ones, and so on, up to time $t$. As we mentioned, this is not the right model for \eqref{eq:smolu0}, but it is nonetheless relevant, and it is also easier to study.

At the end of this process $t = \pinf$, we have thus bound all the arms (except maybe one), to obtain a random (multi-)graph. Notice that it would be exactly the same, in law, to successively choose a uniform pair of free arms until they are all picked, or to choose a pairing of the arms, uniform among all the possible pairings. This is the well-known \emph{configuration model} (CM) with degree distribution $\mu\N$, see e.g. \cite{BollobasRG,vdH}.

In this modified model, if we stop at some time $t$, then each arm is activated independently with probability $1 - e^{-t}$, but, conditionally on the activated arms, the order of activation is uniform. We thus still deal with a CM, but whose degree distribution is random. Note however that, by the law of large numbers, this degree distribution is essentially constant. The \emph{process} described above will thus be called a \emph{dynamical} CM (DCM).

There is an obvious coupling between our model and a DCM, by using the same exponential clocks, and these two processes are exactly equal up to gelation, that is, the time when a large cluster appears. Therefore, it seems reasonable that information on (dynamical) CM would give information on our model. For instance, if we know the size of the largest component in a DCM, then we know the gelation time in our model. This is described in more details in Section \ref{sec:firstgel}.

Explaining our results takes same time and preparation, and we shall delay this to Section \ref{sec:outline}. We first conclude this introduction with a short review of the literature, and the description of the plan of our paper.

\subsection{Related works}

\subsubsection{Frozen percolation on trees}

Aldous considers in \cite{AldousFP} a model of frozen percolation on a binary tree. Edges are given independent uniform on $[0,1]$ clocks, and when a clock rings, the edge appears if and only if both vertices belong to finite clusters. A rewording of the latter condition is that any cluster which becomes of infinite size is instantaneously frozen and can no longer interact with other clusters. 

Gelation happens when the first infinite cluster is formed, at time $1/2$. After proving existence of such process past $1/2$, Aldous computes in particular the rate at which the cluster containing a given edge or a given vertex becomes frozen. He further shows that at any time past gelation, conditionally given that an edge $e$ is present and its cluster is finite, clusters of the edge $e$ is a {\em critical} Galton-Watson tree with two ancestors. On the other hand, the distribution of infinite clusters is always that of an incipient infinite cluster (in this case a Galton-Watson with $\Bin(2,1/2)$ offspring distribution conditioned to be infinite).

\subsubsection{Forest-fire models}

R\'ath and T\'oth in \cite{RathToth} prove SOC for a model of forest fires on Erd\H{o}s-R\'enyi random graphs. In their model, edges between each pair of the $N$ particles appear (or re-appear) at rate $1/N$, while lightning strikes particles at a fixed rate $\a(N)/N$, with $(\a(N))$ satisfying Assumption \eqref{eq:alpha0} for the most interesting behavior. The effect of lightning is that edges in the corresponding connected components simply vanish (but vertices remain). The characterization of SOC in \cite{RathToth} is through the fact that past gelation, the limiting concentrations $(c_t(m), m \geq 1)$ satisfy $\sum_{m \geq k} m c_t(m) \sim C(t) k^{-1/2}$, with $C(t) > 0$ for $t \ge 1$; for $t < 1$, the decay is at an exponential rate.

Related to this model, R\'ath considers in \cite{Rath} a model of frozen percolation on the Erd\H{o}s-R\'enyi random graph. Edges between each pair of the $N$ particles appear at rate $1/N$, and vertices are hit at rate $\a(N)/N$. The effect of a vertex being hit is that its connected component becomes instantaneously frozen. 
R\'ath obtains convergence of the concentration of clusters of size $m$ towards the solution to Smoluchowski's equation with a multiplicative kernel. He further proves SOC for this model, in the same sense as \cite{RathToth} above, and obtains limit theorems for the remaining mass and the distribution of the size of the component of a typical vertex. 

Although the models of \cite{MN0} and \cite{Rath} present obvious similarities, they are also quite different: for instance the concentrations in the model of \cite{MN0} converge to the solution of Smoluchowski's equation with a multiplicative kernel, but the ones in \cite{RathToth} do not. Intuitively, this is due to the presence of additional dust created by the burning of clusters, which then also affects the growth of larger clusters. The characterization of SOC in these two papers is also quite different, since \cite{Rath} obtains SOC on the continuous model, while our goal in \cite{MN0}, as in the present paper, is to observe SOC on a \emph{microscopic} level.

\subsection{Organization of the paper}

Following this, Section \ref{sec:outline} is a precise outline of the paper, showing rigorous statements of our results and the main steps and ideas of the reasoning. We also introduce our assumptions and the main notation. For convenience, an index of notation is added in Appendix \ref{ap:indexnot}. Section \ref{sec:outline} contains in particular an explanation of the link between our model and the configuration model, and gives the main idea of how the latter helps study the former.

In Section \ref{sec:smolu}, we state and prove the main results concerning Equation \eqref{eq:smolu}. This section is essentially independent of the rest of the paper, and is merely meant to justify that we indeed get a model for Smoluchowski's equation with limited aggregations. In particular, the details can be skipped in a first reading.

We get back to our model in Section \ref{sec:prelim}. This is a part containing three types of preliminary results. First, that, for any time, there is a still a positive proportion of particles remaining in solution, what allows to use asymptotic results. Secondly, some fairly easy technical results which allow, thanks to our assumptions, to strengthen most convergence results that we get. Finally, we prove some combinatorial results describing the structure of our system at any time in terms of the configuration model, what allows to justify the comparisons between the two models.

The longest part of this article is by far Section \ref{sec:JL}, where we study the CM in the critical window, and prove an improved version of the result of Janson and Luczak. That section is focused on the CM and is entirely self-contained. It has therefore an interest of its own for readers who are more interested in random graphs. We also prove an extension of this result for the percolation on a CM.

These results are applied to \emph{dynamical} CM in Section \ref{sec:gelDCM}, specifically to study when, in a DCM, we observe the appearance of a cluster of size $\a(N)$. We also determine the properties of this cluster. This is a quite straightforward application of the previous section, but the results are slightly tedious to write, and require a lot of notation.

To clarify the link between our model and DCM over several gelation events, we introduce in Section \ref{sec:altmodel} an alternative model. It is built from independent DCM, more precisely from parts of DCM pasted together at the right times. We construct in this way a coupling of our model with another easier model, and we prove, using the results of the preceding section, that they coincide with high probability, at least on compact intervals.

This completes the most technical part of the work. What remains is to use this comparison, and the results on the DCM, to study our process. This is done in Section \ref{sec:limit}. We first prove tightness of our processes, and then show that subsequential limits have to solve a PDE. Despite the unusual nature of this PDE, it turns out to be exactly solvable. This allows to conclude in a classical fashion.

Finally, Section \ref{sec:conclusion} is a conclusion, where we first explain how to recover \eqref{eq:cinf}, and solve our initial problem. We also present some examples of initial distribution for which computations can be carried out explicitly. We conclude by discussing our assumptions and some open problems.

One last note to the reader: \cite{MN0} and the present paper obviously use similar ideas, but the presence of arms creates many complications, both in the reasoning and in the techniques (as the length of the papers can attest). We thus think that \cite{MN0} is a good preliminary reading.

\section{Outline and main results} \label{sec:outline}

The goal of this section is to introduce some notation, describe precisely our results, and finally get a flavor of the reasoning route that we will follow, by studying what happens when gelation occurs.

\subsection{Notation and assumption}

For convenience, we summarized all the notation that we use in Appendix \ref{ap:indexnot}. To begin with, for a measure $\mu$ on $\bN = \{0,1,2,\dots\}$ and $i \in \bN$, we let
\[
G_{\mu}(x) = \sum_{k \geq 0} \mu(k) x^k, \quad x \in [0,1],
\]
for its generating function, and 
\[
m_i^{\mu} = G_{\mu}^{(i)} (1) = \sum_{k \geq 0} k (k-1) \dots (k-i+1) \mu(k)
\]
for the $i$-th factorial moment of $\mu$. Weak convergence of measures is denoted by $\to$.

Recall that in our model, we start with $N$ particles indexed by $[N] = \{1,2,\dots,N\}$, and with degree distribution $\mu\N$. Clusters of size $< \a(N)$ are \textbf{small} (or \textbf{in solution}), those of size $\geq \a(N)$ are \textbf{large} (or \textbf{in the gel}). We set i.i.d. exponential clocks with parameter one on each arm, and bind successively the first and second activated \emph{in solution}, the third and fourth \emph{in solution}, and so on. We let $G(t)$ describe the graph created by the particles and their bonds at time $t$. In fact, we will always consider graphs (and trees) up to graph isomorphism, but this is a minor technical issue that we will not mention again.

For the sake of definiteness, we assume all our processes to be c\`adl\`ag. When we talk of the \textbf{degree} of a particle, we always mean its total number of arms (activated or not), which is therefore a constant in time for a given particle.

We will only be interested in particles in solution, so we let $G_S(t)$ be the restriction of $G(t)$ to the particles in solution at time $t$. Note that there are no bounds between $G_S(t)$ and its complement. We also define
\begin{itemize}
\item $N_t$, the number of particles in solution;
\item $n_t = N_t/N$, the concentration of particles in solution;
\item $P_t(k,r)$, the number of particles in solution with $k$ activated arms and degree $r$;
\item $p_t(k,r) = P_t(k,r)/N$, the concentration of particles that are in solution and with $k$ activated arms and degree $r$;
\item $\mu_t(r) = \sum_{k=0}^r P_t(k,r) / N_t$, the distribution of the degree of the particles in solution;
\item $\pi_t(k) = \sum_{r=0}^{\pinf} P_t(k,r) / N_t$, the distribution of the number of activated arms of the particles in solution.
\end{itemize}
With a slight abuse of notation, we have $\mu_0 = \mu$. Note that all these quantities are encoded in $(p_t)$, and that $\mu_t$ and $\pi_t$ are probabilities. When a large cluster appears, we witness a \textbf{gelation event}. The first one will hold a special r\^ole and will just be called \textbf{gelation}. We will denote
\begin{itemize}
\item $(\tau_i)_{i \geq 1}$, the time of the successive gelation events;
\item $\taugel = \tau_1$, the gelation time.
\end{itemize}
When necessary, we will add an exponent $(N)$ to these quantities to mean that we talk of the model on $N$ particles. Finally, we say that a sequence $(E_N)$ of events occurs \textbf{with high probability} (\textbf{w.h.p.}) if $P(E_N) \to 1$ as $N \to \pinf$.

We will need two types of assumptions. One concerns the degree distribution $\mu\N$, the other the threshold $(\a(N))$.

\begin{assume} \label{as:mu4}
There exists a probability $\mu$ such that $\mu\N \to \mu$ and $m_3^{\mu} > 0$. Moreover, there are constants $\eta, M \in (0, \pinf)$ such that
\[
\sup_{N \geq 1} \sum_{r \geq 0} \mu\N(r) r^{4+\eta} \leq M.
\]
\end{assume}

Notice that the second part of the assumption also ensures that $m_i^{\mu\N} \to m_i^{\mu}$ for $i = 1, 2, 3, 4$. It turns out that this assumption will suffice for results on the CM, but that we will need a bit more so as to study \emph{dynamical} CM, and our own model.

\begin{assume} \label{as:mu5}
There exists a probability $\mu \neq \dl_0$ such that $\mu\N \to \mu$ and $m_3^{\mu} > 0$. Moreover, there are constants $\eta, M \in (0, \pinf)$ such that
\[
\sup_{N \geq 1} \sum_{r \geq 0} \mu\N(r) r^{5+\eta} \leq M.
\]
\end{assume}

The only difference is of course the $5+\eta$-th moment assumption. Now, as we mentioned, the result of convergence to Smoluchowski's equation only requires a simple assumption on $(\a(N))$, namely \eqref{eq:alpha0}. As in \cite{MN0}, we will need more on $(\a(N))$ to be able to use results on the configuration model, and actually even more than the ``minimal assumption''
\begin{equation} \label{eq:alpha1}
N^{2/3} \ll \a(N) \ll N
\end{equation}
so as to be able to use CM results repeatedly.

\begin{assume} \label{as:alpha}
As $N \to \pinf$, 
\begin{equation} \label{eq:alpha}
N^{4/5} \ll \a(N) \ll N.
\end{equation}
\end{assume}

The careful reader will notice in the proofs that there is a balance between the assumptions on $(\mu\N)$ and on $(\a(N))$, in that stronger moment conditions allow to take smaller $\a(N)$. We did not try to optimize this.

\subsection{Configuration model} \label{sec:CM}

\subsubsection{Phase transition}

Consider $N$ vertices, with $N \pi\N(i)$ of them of degree $i$ (that is, endowed with $i$ arms, or half-edges). The probability $\pi\N$ is called the \textbf{degree distribution}. Performing a uniform pairing of all the arms (with eventually one half-edge left unpaired, if their total number is odd) gives a random graph, called the  \textbf{configuration model} $CM(N,\pi\N)$. Let us denote by $\cC_1(N,\pi\N)$ and $\cC_2(N,\pi\N)$ the largest and second largest components of the graph $CM(N,\pi\N)$ (chosen uniformly at random if there are several choices). We write $\cC_1\N$ and $\cC_2\N$ if there are no ambiguities. For a given component $\cC$, we let $v_k(\cC)$ be the number of vertices of degree $k$ in $\cC$, and $v(\cC) = \sum_{k \geq 0} v_k(\cC)$ its size.

The configuration model has been extensively studied since its introduction by Bollob\'as in \cite{BollobasCM}. There are in fact several variants for its definition, see for instance Chapters 7 and 10 of \cite{vdH} for a survey. In this paper we take the simplest definition as a uniformly chosen pairing of arms, thus allowing the possible presence of self-loops or multiple edges, and leading to an obvious (multi-)graph structure. We also allow one arm to be left unpaired when there is an odd total number of arms, without affecting the graph structure.

Denote $m\N_i = m_i^{\pi\N}$, and let us assume that $\pi\N$ converges weakly to some probability $\pi$, with $m_1\N \to m_1^{\pi}$ and $m_2\N \to m_2^{\pi}$. An important parameter for the CM is
\begin{equation} \label{eq:gamma}
\g^{\pi} := m_2^{\pi} - m_1^{\pi}.
\end{equation}
An abrupt transition between the subcritical regime $\g^{\pi} \leq 1$, where most vertices are in finite components, and a supercritical regime $\g^{\pi} > 1$, where a positive proportion of vertices belongs to a single giant component, was shown by Molloy and Reed in 1995 \cite{MRPoint}, up to some technical conditions, which were removed in \cite{JL}. We will be more precise shortly, but let us first start by explaining what happens at a microscopic level. This should enlighten the appearance of this parameter $\g$, as well as the following results.

For a measure $\pi \neq \dl_0$ on $\bN$, we will denote $\hat{\pi}$ the \textbf{size-biased, shifted by one} probability defined by
\begin{equation} \label{eq:hat}
\hat{\pi}(k) = \frac{(k+1)\pi(k+1)}{\sum_{i \geq 1} i \pi(i)}, \quad k \in \bN.
\end{equation}
To see why this is a relevant notion, look at a $CM(N,\pi)$ for large $N$. One way to build it is to select a vertex uniformly at random, take one of its half-edges $e$ (chosen in any way), then choose another half-edge $e'$ uniformly at random, and create the link between $e$ and $e'$. Then, select another free half-edge in this cluster (in any way), choose another free half-edge uniformly at random, bind them, and so on. Eventually, we may run out of half-edges in the cluster, and we may then start with another vertex. In this way, we then build a CM component-wise. This construction is actually at the heart of the proof of \cite{JL}, and will be used in Section \ref{sec:JL}.

Now, let us see what happens in this algorithm. First, we pick a vertex $v$ uniformly at random. Its number of half-edges has law $\pi\N \approx \pi$. Then, we pick an \emph{half-edge} uniformly at random, belonging to a vertex $v'$. Forgetting about the half-edge we already chose, it is easy to see that the number of half-edges of $v'$ is distributed according to $\hat{\pi}\N \approx \hat{\pi}$. The difference clearly lies in the fact that we choose a half-edge, rather than choosing a vertex. Repeating the result, we see that, as long as the cluster is not too big, we essentially always pick new vertices, and choose them according to $\hat{\pi}$. Therefore, the cluster we will see will be a tree, whose root has a number of children distributed according to $\pi$, and whose other vertices have a number of children distributed according to $\hat{\pi}$. In branching processes terminology, this is a \textbf{delayed Galton-Watson tree}, that we denote $\GW_{\pi,\hat{\pi}}$.

So what does it have to do with $\g$? Recall \cite{AtNey} that a Galton-Watson tree $\GW_{\pi}$ with reproduction law $\pi$ has extinction probability one if and only if $m_1^{\pi} \leq 1$. Therefore, it is easy to see that a $\GW_{\pi,\hat{\pi}}$ has extinction probability one if and only if $m_1^{\hat{\pi}} \leq 1$. But one readily computes $m_1^{\hat{\pi}} = m_2^{\pi}/m_1^{\pi}$, and therefore there is a.s. extinction if and only if $m_2^{\pi}/m_1^{\pi} \leq 1$, that is, $\g^{\pi} \leq 0$. Though this is informal, this explains the appearance of this quantity $\g^{\pi}$. For instance, a similar reasoning could be done for the Erd\H{o}s-R\'enyi graph.

In the next section, we will mention some results showing an even more striking relation with Galton-Watson trees, but let us first state the phase transition result.

\begin{theorem}[\cite{JansonLC,JL,MRPoint}] \label{th:CMgiant}
Assume that $\pi\N \to \pi$ with $\pi(1) > 0$ and
\[
\sup_{N \geq 1} \sum_{k \geq 0} \pi\N(k) k^2 < \pinf.
\]
Then
\begin{enumerate}
\item if $\g^{\pi} \leq 0$, then $v(\cC_1\N)/N \to 0$ in probability;
\item if $\g^{\pi} > 0$, then $v(\cC_1\N)/N$ converges in probability to the (positive) survival probability of a $\GW_{\pi,\hat{\pi}}$-tree.
\end{enumerate}
If moreover
\[
\sup_{N \geq 1} \sum_{k \geq 0} \pi\N(k) k^{4+\eta} < \pinf
\]
and $\g^{\pi} < 0$, then there is a constant $K$ such that $v(\cC_1\N) \leq K N^{1/(4+\eta)}$ w.h.p.
\end{theorem}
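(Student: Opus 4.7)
The common tool for all three parts is a coupling between the breadth-first exploration of the component of a uniformly chosen vertex $v$ in $CM(N, \pi\N)$ and a delayed Galton--Watson tree $\GW_{\pi\N, \hat{\pi}\N}$. Starting from $v$ with degree distributed as $\pi\N$, one iteratively takes a half-edge from the exploration boundary and pairs it with a uniformly chosen free half-edge; if the latter belongs to a new vertex, that vertex's remaining degree is size-biased, i.e.\ approximately distributed as $\hat{\pi}\N$ up to sampling-without-replacement corrections of order $O(k/N)$ after $k$ steps, and if it belongs to an already-explored vertex (a collision), the true exploration is merely shrunk. Up to negligible corrections, $|C_v|$ is stochastically dominated by $|\GW_{\pi\N, \hat{\pi}\N}|$.

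Since $m_1^{\hat{\pi}} = m_2^\pi/m_1^\pi$, the branching process is subcritical when $\gamma^\pi \leq 0$ and supercritical when $\gamma^\pi > 0$. For (1), $\GW_{\pi, \hat{\pi}}$ is a.s.\ finite, so for any $\epsilon > 0$ one picks $L$ with $\pp(|\GW| \geq L) < \epsilon$; then $v(\cC_1\N) \leq L + N_L^+$ where $N_L^+ = \#\{v : |C_v| \geq L\}$, and $\ee N_L^+ = N \pp(|C_v| \geq L) \leq N(\epsilon + o(1))$, yielding $v(\cC_1\N)/N \to 0$ in probability. For (2), the local exploration survives with probability $\rho > 0$, and a standard sprinkling argument (as in Molloy--Reed or Janson--Luczak) shows that all the surviving explorations coalesce into a single giant component of size $\approx \rho N$.

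For the quantitative bound (3), the uniform $(4+\eta)$-th moment bound on $\pi\N$ translates (via $\hat{\pi}\N(k) = (k+1)\pi\N(k+1)/m_1^{\pi\N}$) into a uniform $(3+\eta)$-th moment bound on $\hat{\pi}\N$, and the condition $\gamma^\pi < 0$ gives a uniform spectral gap $1 - m_1^{\hat{\pi}\N} \geq \delta > 0$ for $N$ large. A classical fact about subcritical GW processes --- derivable from the recursion $T = 1 + \sum_{i=1}^{\xi} T_i$ via Rosenthal-type inequalities, or from the analytic behavior of the generating function near $1$ --- gives $\sup_N \ee |T_{\hat{\pi}\N}|^{3+\eta} < \infty$, and then convexity gives $\ee |\GW_{\pi\N, \hat{\pi}\N}|^{3+\eta} \leq 1 + \ee D^{3+\eta} \cdot \ee T^{3+\eta}$ with $D$ distributed as $\pi\N$, so the quantity is uniformly bounded. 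The stochastic domination transfers this to $\sup_N \ee |C_v|^{3+\eta} < \infty$, hence $\pp(|C_v| \geq k) \leq C k^{-(3+\eta)}$ by Markov. Setting $N_k = \#\{v : |C_v| \geq k\}$, one has $\{v(\cC_1\N) \geq k\} = \{N_k \geq k\}$ (since a single component of size $\geq k$ contributes at least $k$ to $N_k$), and Markov once more gives $\pp(v(\cC_1\N) \geq k) \leq \ee N_k/k = N\pp(|C_v|\geq k)/k \leq C N k^{-(4+\eta)}$, which vanishes as soon as $k = K N^{1/(4+\eta)}$ with $K$ large.

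The hardest step is the uniform moment bound $\sup_N \ee|T_{\hat{\pi}\N}|^{3+\eta} < \infty$: the constants must depend only on the spectral gap $1 - m_1^{\hat{\pi}\N}$ and on $m_{3+\eta}^{\hat{\pi}\N}$, not on $N$. A naive Rosenthal induction yields an inequality of the form $\ee T^p \leq C_p' + C_p \ee \xi^{p/2} (\ee T^2)^{p/2} + C_p m \cdot \ee T^p$ which requires $C_p \cdot m < 1$, a condition that may fail at fixed $p$ even when $m < 1$. One resolves this either by a truncation of $\xi$ combined with a Foster--Lyapunov recursion, or by going through the $L^p$-bounds for the hitting time of the random walk encoding the GW tree. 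Once this moment bound is established, the stochastic-domination coupling and the Markov argument make the transfer to the CM routine.
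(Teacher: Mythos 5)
The paper does not prove Theorem~\ref{th:CMgiant}: it is attributed to \cite{MRPoint,JL,JansonLC}, with only a remark that the last item follows from \cite{JansonLC} under the moment assumptions in force. Your sketch therefore fills a gap the authors chose to leave to the literature, and the route you take --- breadth-first exploration compared to a delayed Galton--Watson tree, first-moment bounds, a random-walk estimate for progeny moments --- is the standard one there, so the overall strategy is sound. A few points should nonetheless be flagged.

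The assertion that $|C_v|$ is stochastically dominated by $|\GW_{\pi\N,\hat\pi\N}|$ ``up to negligible corrections'' is not a domination. Collisions do only shrink the component, but the degree of each newly discovered vertex follows the size-biased \emph{residual} degree distribution, which, after low-degree vertices are consumed, can become stochastically \emph{larger} than $\hat\pi\N$, so the naive coupling fails. The standard repair (cf.\ \cite{MRPoint,JL}) is to dominate by a GW tree with an $\eps$-inflated offspring law that remains strictly subcritical uniformly in $N$ --- which the strict inequality $\g^\pi<0$ together with the uniform second-moment bound makes possible --- valid so long as at most $\eps N$ half-edges have been consumed; the event $\{|C_v|\geq\eps N\}$ is then killed by a crude application of the same tail bound, so your first-moment arguments for parts (1) and (3) close. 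Note also that your final estimate $\pp(v(\cC_1\N)\geq k)\leq C N k^{-(4+\eta)}$ gives, at $k=KN^{1/(4+\eta)}$, only $C/K^{4+\eta}$, i.e.\ tightness of $v(\cC_1\N)/N^{1/(4+\eta)}$ rather than a probability tending to $1$ for a fixed $K$. That weaker reading is what Section~\ref{sec:firstgel} actually uses under \eqref{eq:alpha1} ($v(\cC_1\N)=o(\a(N))$ w.h.p.), so this is a matter of phrasing, but worth saying out loud.

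Part (2) is dismissed as ``a standard sprinkling argument.'' This is the genuinely hard direction and accounts for the bulk of \cite{MRPoint} and of the coloring/exploration analysis of \cite{JL}; as written it is a pointer, not a proof. Finally, you are right to single out the uniform bound $\sup_N\ee|T_{\hat\pi\N}|^{3+\eta}<\infty$ as the delicate step, and the random-walk route you propose is the right one: $T$ is the first passage time to $-1$ of a walk with increments $\xi-1$, whose drift $m_1^{\hat\pi\N}-1$ is uniformly negative and whose $(3+\eta)$-th moment is uniformly bounded, and stopped-random-walk moment estimates (Gut, or Heyde's first-passage results) then give a bound depending only on these two quantities, where a naive Rosenthal recursion does not close.
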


In other words, a giant component exist with positive probability if and only if $\g^{\pi} > 0$. If it does, its size is that of the survival probability of a $\GW_{\pi,\hat{\pi}}$-tree, similarly to the Erd\H{o}s-R\'enyi case \cite{JansonRG,vdH}. This is the classical result of \cite{MRPoint}, with the assumptions of \cite{JL}. These two papers do not make precise the size of the components when $\g^{\pi} < 0$. This is however done in \cite{JansonLC}, under some growth assumptions. Under the assumption done, corresponding to Assumption \ref{as:mu4}, it amounts to the statement above.

In parallel to the Galton-Watson terminology, we will therefore call the cases $\g^{\pi} < 0$, $\g^{\pi} = 0$ and $\g^{\pi} > 0$ respectively \emph{subcritical}, \emph{critical} and \emph{supercritical}. Notice in particular that $\Tgel^0$, defined in \eqref{eq:Tgel0}, is finite if and only if we are in the supercritical case. We will explain this in section \ref{sec:firstgel}.

\subsubsection{Local convergence} \label{sec:locconv}

We explained above what a typical component in a $CM(N,\pi\N)$ should look like, namely a $GW_{\pi,\hat{\pi}}$. This intuition can actually be made precise through the notion of \emph{local convergence}: essentially, a rooted graph converges locally to another rooted graph if they look the same at any fixed distance around the root. For probabilistic results, the natural extension is the concept of \emph{local weak convergence} popularized by Aldous, see e.g \cite{AldousSteele,BordenaveRG}. The following result is exactly Theorem 3.15 of \cite{BordenaveRG}, see also Proposition 2.5 of \cite{DemboMontanari}.

\begin{prop}{\cite{BordenaveRG,DemboMontanari}} \label{prop:locconv}
Assume that $\pi\N \to \pi$ with $m_2\N \to m_2^{\pi}$. Consider a vertex uniformly at random, and the component $\Ctyp\N$ rooted at this vertex. Then, in the sense of local weak convergence
\[
\Ctyp\N \to \GW_{\pi,\hat{\pi}}
\]
as $N \to \pinf$.
\end{prop}

This makes the intuition from the previous section rigorous: a typical component of a CM asymptotically looks like a delayed GW tree. In particular, in the subcritical case, these trees are finite a.s., and the distribution (as a graph) of a typical component converges to that of a $\GW_{\pi,\hat{\pi}}$ tree.

This result of weak local convergence allows to give a more direct explanation to this formula than the one given in \cite{BerSid} that we mentioned in the introduction. Indeed, $\cinf\N(0,m)$ is the concentration of clusters of size $m$ (let us forget about the possible unique free arm remaining) in a $CM(N,\pi\N)$. There is a size bias when one picks a \emph{vertex} uniformly at random, and thus
\[
\cinf\N(0,m) = \frac1m \Q \left ( v(\Ctyp\N) = m \right ),
\]
where $\Q$ is the probability corresponding to the choice of the root vertex of $\Ctyp\N$ (so the RHS above is still random). But under the assumptions of Proposition \ref{prop:locconv}, $\Ctyp\N \to \GW_{\pi,\hat{\pi}}$, and therefore
\[
\Q \left ( v(\Ctyp\N) = m \right ) \to \Q \left ( v(\GW_{\pi,\hat{\pi}} ) = m \right ) = \frac{1}{m-1} \hat{\pi}^{*m}(m-2),
\]
what explains Formula \eqref{eq:cinf0}. The last part of the above computation is a simple application of the celebrated formula of Dwass \cite{Dwass}.

We will use the same type of reasoning to explain Formula \eqref{eq:cinf} in Section \ref{sec:limconc}. However, in this case, we do not know what a typical cluster look like. This is what we will investigate in this article, as explained in the rest of this outline.

\subsubsection{The critical window}

The reader will have noticed that Theorem \ref{th:CMgiant} does not provide much information about the case $\g^{\pi} = 0$. Just as in the case of the Erd\H{o}s-R\'enyi graph, what happens in this \emph{critical window} is more subtle. So let us first state the result of \cite{JL} describing precisely what happens then. We let $\g_N = m_2\N - m_1\N$.

\begin{theorem}[\cite{JL}] \label{th:JL}
Suppose that Assumption \ref{as:mu4} holds for $\pi$. Assume moreover that, for $\g_N = m_2\N - m_1\N$,
\[
N^{-1/3} \ll \g_N \ll 1.
\]
Then, for every $\dl > 0$, the following hold w.h.p.
\begin{enumerate}
\item The size of the largest component verifies
\[
\left | v \left ( \cC_1\N \right ) - 2 \frac{m_1^{\pi}}{m_3^{\pi}} N \g_N \right | \leq \dl N \g_N.
\]
\item For every $k \geq 0$,
\[
\left | v_k \left ( \cC_1\N  \right ) - 2 \frac{k \pi(k)}{m_3^{\pi}} N \g_N \right | \leq \dl N \g_N.
\]
\item The size of the second largest component verifies
\[
v \left ( \cC_2\N \right ) \leq \dl N \g_N.
\]
\end{enumerate}
\end{theorem}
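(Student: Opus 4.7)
The plan is to follow the sequential-pairing strategy of \cite{JL} and reduce each of (i), (ii), (iii) to a law-of-large-numbers statement for an appropriate random walk. I would build the $CM(N,\pi^{(N)})$ by a depth-first exploration: fix any starting vertex, and repeatedly pick an active (revealed but not yet paired) half-edge and pair it with a uniformly chosen partner among the remaining free half-edges; if the partner reveals an unexplored vertex, the latter joins the active set; when the active set empties, pick the next unexplored vertex and restart. Let $S_k$ be the number of active half-edges after $k$ pairings. Then the connected components, in exploration order, correspond exactly to the successive excursions of $S$ away from $0$, and the number of vertices of degree $d$ in a component equals the number of such vertices revealed during the corresponding excursion, so each item of the theorem becomes a statement about a particular excursion of $S$.

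I would then pin down the drift of $S$. Conditionally on the history, with probability $\approx S_k/L^{(N)}(k)$ (where $L^{(N)}(k)$ is the number of remaining free half-edges) the partner is already active and $\Delta S_k = -2$; otherwise, a new vertex is revealed whose total degree is sampled size-biasedly from the residual degree distribution. A vertex of degree $d$ has probability $\approx (1-u)^d$ of being unexplored after a fraction $u := 2k/(m_1^\pi N)$ of the half-edges have been consumed, so the residual degree law is approximately $\pi(d)(1-u)^d/G_\pi(1-u)$. A direct computation using $G_\pi'(1) = m_1^\pi$, $G_\pi''(1) = m_2^\pi$, $G_\pi'''(1) = m_3^\pi$ yields the drift expansion
\[
\ee[\Delta S_k \mid \mathcal{F}_k] \;\approx\; \frac{\g_N}{m_1^\pi} \;-\; u \cdot \frac{m_3^\pi}{m_1^\pi} \;+\; O(u^2) \;+\; O\!\left(\tfrac{S_k}{N}\right).
\]
Integrating, the expected trajectory $\ee[S_k]/N$ is a concave parabola in $u$ that vanishes at $u=0$ and at $u^\star := 2\g_N/m_3^\pi$ (with maximum of order $\g_N^2$ attained at $u^\star/2$). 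Combining this with a Freedman/Bernstein-type martingale inequality, after truncating the few very-high-degree vertices using the $(4+\eta)$-th moment assumption to bound the increments, would give
\[
\sup_{0 \leq k \leq k^\star} \big|S_k - \ee[S_k]\big| \;=\; o(N\g_N) \quad\text{w.h.p.,}
\]
where $k^\star := u^\star m_1^\pi N/2 = m_1^\pi N \g_N/m_3^\pi$.

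With the walk controlled up to $k^\star$, assertion (i) follows: the first long excursion of $S$ closes essentially at $k^\star$, and during this excursion a vertex of degree $d$ is revealed with probability $\approx 1 - (1-u^\star)^d \approx d u^\star$, so the total number of vertices revealed is $\approx N \sum_d \pi(d) \cdot d u^\star = N m_1^\pi u^\star = 2 m_1^\pi N \g_N/m_3^\pi$, matching (i). The same identity at each fixed degree $d$ gives (ii), namely $v_d(\cC_1^{(N)}) \approx N \pi(d) \cdot d u^\star = 2 d \pi(d) N \g_N/m_3^\pi$. For (iii), I would observe that at time $u^\star$ the drift has become $-\g_N/m_1^\pi$, strictly negative of order $\g_N$; hence the exploration past $k^\star$ is quantitatively subcritical, and classical subcritical component bounds (as in the estimate quoted in Theorem~\ref{th:CMgiant} for $\g<0$, or their sharper near-critical versions) give that every subsequent component has size $o(N\g_N)$ w.h.p.

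The principal obstacle will be to make the drift expansion and the concentration bound precise and uniform on the correct scale. Because $\g_N \ll 1$, any $o(1)$ error in the drift is of the same order as the leading signal $\g_N$, so the residual degree distribution must be tracked with error $o(\g_N)$ uniformly up to just past $k^\star$; this requires careful bookkeeping of the $O(u^2)$ and $O(S_k/N)$ correction terms along the whole trajectory, as well as good control of the empirical higher factorial moments of $\pi^{(N)}$. The fluctuation bound $|S_k - \ee[S_k]| = o(N\g_N)$ is only marginal given $\g_N \gg N^{-1/3}$, which is precisely why the $(4+\eta)$-th moment hypothesis is essential: a handful of very-high-degree vertices could otherwise produce outsized jumps that spoil the concentration, and a truncation argument is needed to isolate them before any martingale inequality is applied. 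A secondary subtlety is ruling out a second excursion comparable to the giant; this follows from the fact that, immediately after the giant, the drift is strictly negative of order $\g_N$, placing the subsequent exploration in a genuinely subcritical regime.
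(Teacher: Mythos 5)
The architecture you propose---exploration walk whose excursions correspond to components, a concave-parabolic drift vanishing at $0$ and at $u^\star = 2\g_N/m_3^{\pi}$, concentration of the walk, and a separate argument to exclude a second large component---is exactly the Janson--Luczak architecture, and it is also what the paper reproduces and strengthens in its proof of Theorem~\ref{th:JLimproved}. But the mechanism you propose to control the walk is different, and it is precisely where what you call ``the principal obstacle'' is a genuine gap rather than bookkeeping.

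Your drift expansion rests on the claim that after $k$ pairings the residual degree law is approximately $\pi(d)(1-u)^d/G_{\pi}(1-u)$ with $u=2k/(m_1^{\pi} N)$. In the discrete exploration this is only a heuristic: of the roughly $2k$ half-edges consumed by step $k$, the $k$ ``responders'' are chosen uniformly from the remaining pool, but the other $k$ ``initiators'' are algorithm-chosen from the active set, a complicated adapted object whose composition depends on the entire excursion history. Justifying the residual-law claim with error $o(\g_N)$ uniformly over $k\lesssim N\g_N$, down to $\g_N$ of order $N^{-1/3}$, is a nontrivial piece of work for which you supply no mechanism. The entire point of the exponential-clock coloring algorithm used by Janson--Luczak and by the paper is to sidestep this: each half-edge carries an independent $\cE(1)$ lifetime, so the count of sleeping degree-$k$ vertices is a pure death process, \emph{independent} across $k$, with elementary concentration (yielding \eqref{eq:cVk} and \eqref{eq:tW}), and the discrepancy with the true exploration is reduced to a short one-sided bound controlled by the running infimum of the trajectory (Step~3 of the paper's proof). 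Your sketch would need a substitute for this decoupling and does not identify one.

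There is also a genuine but legitimate divergence on item (iii). You argue that past $k^\star$ the drift is $\approx -\g_N/m_1^{\pi}$, hence the residual exploration is quantitatively subcritical and yields only components of size $o(N\g_N)$; this does work, and $\g_N\gg N^{-1/3}$ is precisely what makes the near-critical subcritical bound $O(\g_N^{-2})$ negligible against $N\g_N$. The paper's Step~6 instead uses the size-biased order in which components are discovered: a second component of size $\ge\dl N\g_N$ would, with probability bounded below, be explored \emph{before} the candidate giant, contradicting the already-established trajectory bounds. That route is shorter and avoids proving a near-critical subcritical estimate from scratch.
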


This result describes precisely what happens in the critical window. First, Points 1 and 3 show that there is a unique largest component, which has size about $2 N \g_N m_1^{\pi}/m_3^{\pi}$. Notice that this quantity is of an order strictly between $N^{2/3}$ and $N$, which is the reason for Assumption \ref{eq:alpha1}. The second point means that the vertices in that large component are chosen by a size-bias of their degree. This amounts to saying that, to find the vertices in that component, one chooses about $2 m_1^{\pi} N \g_N / m_3^{\pi}$ \emph{arms} uniformly at random, and picks the corresponding vertices.

\subsection{First gelation time} \label{sec:firstgel}

Let us explain how to use these results to study our model. We assume that the initial distribution of degrees $(\mu\N)$ verifies Assumption \ref{as:mu4} and that $(\a(N))$ verifies \eqref{eq:alpha1}. Recall that $G(t)$ is the graph induced by the particles at time $t$, along with their links. As we already mentioned in the introduction, there is another model coupled with ours that we can consider: link the first and second activated links, the third and fourth, and so on, until time $t$, to get a graph process $(\cG(t))$. In this other process, that we call a \textbf{dynamic configuration model} (DCM), there is no gel and solution, and any cluster can interact with any other cluster.

Recall that $\taugel$ is the gelation time, that is, when we first see a large component. It is clear that
\[
G(t) = \cG(t), \quad t \leq \taugel.
\]
In particular, $\taugel$ is exactly the time when a large component appears in the DCM $\cG$, and the size and properties of that component are the same for both models. Therefore, up to \emph{and including} $\taugel$, it suffices to study $\cG$.

Now, for some fixed time $t$, $\cG(t)$ is a CM on $N$ vertices, with a random degree distribution $\rho\N_t$, where $\rho\N_t$ corresponds to picking an arm or not with probability $p_t = 1 - e^{-t}$. By law of large numbers, it is easy to check that $\rho\N_t \to \rho_t$ a.s. with
\[
G_{\rho_t}(x) = G_{\mu} \left ( (1-e^{-t}) x + e^{-t} \right ),
\]
and even more, than Assumption \ref{as:mu4} holds a.s. for $(\rho_t\N)$. The generating function allows to compute
\[
m_i^{\rho_t} = p_t^i \; m_i^{\mu}, \quad \g^{\rho_t} = p_t \; (p_t \; m_2^{\mu} - m_1^{\mu}).
\]
Clearly, $\g^{\rho_t}$ is an increasing function of $t$ that vanishes at $\Tgel$ whenever it is finite, where
\begin{equation} \label{eq:Tgel}
\Tgel =
\begin{cases}
- \log \left( 1 - \frac{m_1^{\mu}}{m_2^{\mu}} \right) & \text{if $m_2^{\mu} > m_1^{\mu}$} \\
\pinf & \text{otherwise}.
\end{cases}
\end{equation}
Notice that $\Tgel$ is finite if and only if $\Tgel^0$, defined in \eqref{eq:Tgel0} is finite. We can then see different alternatives.

\paragraph{The subcritical case} The subcritical case corresponds to $\g^{\mu} = m_2^{\mu} - m_1^{\mu} < 0$ and is simple. Indeed, when all the arms are activated, the graph $\cG(\infty)$ is a $CM(N,\mu\N)$, which is subcritical. In particular, Theorem \ref{th:CMgiant} implies that its largest component has size $O(N^{1/(4+\eta)}) \ll \a(N)$, since we assume \eqref{eq:alpha1}. This means that w.h.p. there is no gelation event, and the final configuration of clusters is that of this configuration model.

\paragraph{The critical case} The critical case corresponds to $\g^{\mu} = 0$, and is not precise enough to decide whether or not there will be at least one gelation event (for that, one would at least need to know the leading order of $\g_N$ as $N \to \infty$, according to Theorem \ref{th:JL}). Nonetheless, at any time $t \geq 0$, $\g^{\rho_t} < 0$, and therefore the graph $\cG(t)$ is an asymptotically \emph{subcritical} CM. By the exact same reasoning as above, w.h.p. there is no gelation event up to time $t$, and the configuration of clusters at any time $t \ge 0$ is that of a configuration model with $N$ particles and degree distribution $\rho_t$.

Notice that the critical and subcritical cases mean that
\begin{itemize}
\item $\Tgel = \pinf$;
\item $\Tgel^0 = \pinf$, that is, there is no gelation in Smoluchowski's equation \eqref{eq:smolu0};
\item there is w.h.p. no gelation (at a finite time) in our discrete model.
\end{itemize}
In other words, $\taugel\N \to \Tgel = \pinf$. As expected, when $\Tgel < \pinf$, that is, there is gelation in \eqref{eq:smolu}, we will observe gelation in our discrete model as well, as we now explain.

\paragraph{The supercritical case} The supercritical case corresponds to $\g^{\mu} > 0$. Then $\g^{\rho_t} < 0$ for all $t < \Tgel$, and by a reasoning similar as above, there is w.h.p. no gelation event at a time $t < \Tgel$. On the other hand, having no gelation event by time $t > \Tgel$ means that a $CM(N,\rho\N_t)$ has no cluster of size greater than $\a(N) \ll N$. But then $\g^{\rho_t} > 0$, so by Theorem \ref{th:CMgiant}, this event has asymptotically vanishing probability. Hence, this simple argument tells us that
\begin{equation} \label{eq:limTgel}
\taugel\N \to \Tgel,
\end{equation}
provided again that \eqref{eq:alpha1} holds. Clearly, this even just requires that $N^{1/(4+\eta)} \ll \a(N) \ll N$.

Using Janson and Luczak's result in the critical window, Theorem \ref{th:JL}, we can in fact be even more precise in understanding typically when the first gelation event occurs, and what happens then. Let us indeed consider the time
\[
t_N = \Tgel + \frac{\a(N)}{N} \frac{m_3^{\mu}}{2 m_2^{\mu}(m_2^{\mu} - m_1^{\mu})}.
\]
Since $\a(N)/N \to 0$, it is easy to check that
\[
\g_N := \g^{\rho\N_{t_N}} = \frac{\a(N)}{N} \left ( \frac{m_3^{\mu}}{2 m_1^{\mu}} + o(1) \right ).
\]
Now, $(\a(N))$ verifies \eqref{eq:alpha1}, and thus the assumptions of Theorem \ref{th:JL} are verified. The largest component in a $CM(N,\rho\N_{t_N})$ has then size about
\[
2 N \frac{m_1^{\mu}}{m_3^{\mu}} \g_N \approx \a(N).
\]
Therefore
\begin{equation} \label{eq:Tgelexpansion}
\taugel\N = \Tgel + \frac{\a(N)}{N} \frac{m_3^{\mu}}{2 m_2^{\mu}(m_2^{\mu} - m_1^{\mu})} + o \left ( \frac{\a(N)}{N} \right ).
\end{equation}
Moreover, we see that the component that is created at this time $\Tgel\N$ has size exactly $\a(N) + o(\a(N))$, when we can \textit{a priori} only tell that it is between $\a(N)$ and $2 \a(N)$. Theorem \ref{th:JL} even allows to be more precise, and to show that, in that first gel cluster, the particles are chosen by a size-bias of their number of activated arms.

Of course, we are slightly cheating here; the result \eqref{eq:limTgel} holds rigorously, but \eqref{eq:Tgelexpansion} is a bit informal. To begin with, we did not make explicit what $o(\cdot)$ means. We also dealt quite liberally with the error terms. This could be fixed easily by taking times slightly before and after $t_n$, as we will do in Section \ref{sec:gelDCM}. A most important imprecision however, is that we used reasonings on the CM with a random degree distribution, without checking that the assumptions of Theorem \ref{th:JL} hold a.s. with respect to the law of the degree distribution. As will turn out, this will require slightly more to be made precise, to wit Assumption \ref{as:mu5}. We refer to Section \ref{sec:gelDCM} for details.

\subsection{Results}

As should be clear from the earlier description, there are three main protagonists in our play: Smoluchowski's equation with limited aggregations \eqref{eq:smolu0}; our discrete model; and the configuration model. We now describe our results concerning these objects.

\subsubsection{Modified Smoluchowski equation}

To begin with, we will be concerned with Smoluchowski's equation with limited aggregations \eqref{eq:smolu0}, or rather the slight modification that we now introduce. For $f : S \to \R^+$, we write $\la c_t, f(a,m) \ra = \sum_{(a,m) \in S} f(a,m) c_t(a,m)$ and the equation is
\begin{equation} \label{eq:smolu}
\dfdt{} c_t(p)  = \frac{1}{2 \la c_t, a \ra} \left ( \sum_{p' \lesssim p} p' \cdot p \bsl p' c_t(p') c_t(p \bsl p')  - \sum_{p' \in S}  p \cdot p' c_t(p) c_t(p') \right ),
\end{equation}
for $p \in S$. Two natural questions are then raised. First, is this equation well-posed? Secondly, what is its relevance to our model? The two following answers should not come as a surprise.

Let us consider initial conditions $c_0$, with
\[
m_1 = \la c_0, a \ra \in (0, \pinf), \quad m_2 = \la c_0, a(a-1) \ra \in [0,\pinf]
\]
and let
\begin{equation} \label{eq:Tgelsmolu}
\Tgel =
\begin{cases}
- \log \left( 1 - \frac{m_1}{m_2} \right) & \text{if $m_2 > m_1$} \\
\pinf & \text{otherwise}.
\end{cases}
\end{equation}
Clearly, for monodisperse initial conditions $c_0(a,m) = \mu(a) \unn{m=1}$ for a probability $\mu$ on $\bN$, then this is the same as the time $\Tgel$ defined in \eqref{eq:Tgel}. Since, we will always consider monodisperse initial conditions, we keep the same notation. The first result is the well-posedness of \eqref{eq:smolu}. The precise definition of a solution is given in Section \ref{sec:smolu}.

\begin{theorem} \label{th:wellposed}
Consider initial conditions $(c_0(p), p \in S)$ with $\la c_0, a \ra \in (0, \pinf)$, and define $\Tgel$ as in \eqref{eq:Tgel}. Then the modified Smoluchowski's equation with limited aggregations \eqref{eq:smolu} has a unique solution $c$ on $\R^+$. It enjoys the following properties. 
\begin{enumerate}
\item The mass $\la c_t, m \ra$ is continuous on $\R_+$, constant before time $\Tgel$, and decreasing afterwards. In other words, there is gelation at time $\Tgel$. 
\item The total number of arms $A_t = \la c_t ,a \ra$ is continuous, decreasing and remains positive on $\R^+$.
\item The ODE $s'(t) = A_{s(t)}, s(0) = 0$, has a unique solution on $\R_+$. This solution is a $C^1$-diffeomorphism of $\R^+$ and $c_{s(t)}$ is the solution to \eqref{eq:smolu0} with initial condition $c_0$. 
\item If $b$ is the solution to \eqref{eq:smolu0} with initial conditions $c_0$, then for $B_t := \la b_t, a \ra$, the ODE $s'(t) = \frac{1}{B_{s(t)}}$, $s(0)=0$, has a unique solution on $\R^+$. This solution is a $C^1$-diffeomorphism of $\R^+$ and $b_{s(t)}$ is the solution to \eqref{eq:smolu} with initial conditions $c_0$. 
\item The limiting concentrations $\cinf(p) = \lim_{t \to \infty} c_t(p)$ exist for all $p \in S$ and are the same as for \eqref{eq:smolu0}, that is, given by \eqref{eq:cinf}. 
\end{enumerate}
\end{theorem}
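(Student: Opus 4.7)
The strategy is to reduce the theorem to the existence and uniqueness of \eqref{eq:smolu0} (established in \cite{NZ}) via a deterministic time change. The right-hand side of \eqref{eq:smolu} differs from that of \eqref{eq:smolu0} by a factor $1/\la c_t,a\ra$, so one expects solutions $b$ to \eqref{eq:smolu0} and $c$ to \eqref{eq:smolu} with the same initial data $c_0$ to be linked by $c_t = b_{s(t)}$ for a time change $s$ characterized by $\la c_t, a\ra = \la b_{s(t)}, a\ra$.

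The first technical step is to analyze the arm mass. Multiplying \eqref{eq:smolu0} by $a$ and summing over $p \in S$, a direct computation (substituting $p = p' \circ q$ in the gain term and using $p' \cdot q = a' a''$) yields $dB_t/dt = -B_t^2$ before gelation, where $B_t := \la b_t, a\ra$. After gelation, at most a nonpositive correction appears (arms may be absorbed into large untracked clusters), so $dB_t/dt \leq -B_t^2$ holds throughout $\R^+$. Comparison with the ODE $u' = -u^2$ then gives $B_t \geq B_0/(1 + B_0 t) > 0$ for all finite $t$; continuity and monotonicity of $B$ follow from the explicit representations of $b$ available in \cite{NZ}. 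The analogous computation applied to \eqref{eq:smolu} gives $dA_t/dt \leq -A_t$ with $A_t := \la c_t, a\ra$, hence $A_t \geq A_0 e^{-t} > 0$. This establishes Statement 2.

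I would then construct the time change to obtain Statement 4 and existence in Statement 1. Given the unique $b$ solving \eqref{eq:smolu0}, define $s : \R^+ \to \R^+$ as the unique solution of $s'(t) = 1/B_{s(t)}$, $s(0) = 0$: local existence and uniqueness are standard since $B$ is continuous and strictly positive, and the identity $t = \int_0^{s(t)} B_v\,dv$ combined with $\int_0^\infty B_v\,dv = \infty$ (from the lower bound above) shows $s$ extends to all of $\R^+$ with $s(t) \to \infty$; positivity of $s'$ makes $s$ a $C^1$-diffeomorphism. Setting $c_t := b_{s(t)}$, the chain rule and \eqref{eq:smolu0} give
\[
\frac{d c_t}{dt} \;=\; s'(t) \cdot \bigl[\text{RHS of \eqref{eq:smolu0}}\bigr]\bigl(b_{s(t)}\bigr) \;=\; \frac{1}{\la c_t,a\ra} \cdot \bigl[\text{RHS of \eqref{eq:smolu0}}\bigr](c_t),
\]
which is exactly \eqref{eq:smolu}. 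For uniqueness and Statement 3, I would run this in reverse: given any solution $c$ of \eqref{eq:smolu}, the ODE $\sigma'(t) = A_{\sigma(t)}$, $\sigma(0) = 0$ admits a unique $C^1$-diffeomorphism solution $\sigma : \R^+ \to \R^+$ (explicitly $\sigma(t) = \log(1+A_0 t)$ before gelation), and $\tilde{b}_t := c_{\sigma(t)}$ solves \eqref{eq:smolu0} with initial datum $c_0$ by a symmetric chain-rule argument. Uniqueness of \eqref{eq:smolu0} forces $\tilde{b} = b$, and since $\sigma^{-1}$ and the $s$ constructed above solve the same ODE, they coincide. The remaining parts of Statement 2 (mass constant before $\Tgel$, decreasing after) and Statement 5 then transport from $b$ to $c$ via $s$, and the identification $\taugel = s^{-1}(\Tgel^0) = \Tgel$ from \eqref{eq:Tgelsmolu} follows from plugging $B_u = B_0/(1+B_0 u)$ into the definition of $s$ on the pre-gelation interval.

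The main technical obstacle is the rigorous justification of the arm-mass inequality $dB_t/dt \leq -B_t^2$ past gelation: the RHS of \eqref{eq:smolu0} is a signed bilinear sum on $S$ whose absolute convergence is not a priori obvious once mass is being lost to the gel, and one must justify the interchange of derivative and summation. I anticipate handling this by exploiting the explicit representations of $b$ derived in \cite{NZ}, or alternatively by a truncation argument followed by a Fatou-type passage to the limit across $t = \Tgel^0$.
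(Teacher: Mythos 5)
Your overall strategy---reducing \eqref{eq:smolu} to \eqref{eq:smolu0} by a deterministic time change and deferring the existence, uniqueness, and qualitative properties of \eqref{eq:smolu0} to \cite{NZ}---matches what the paper has in mind. The paper explicitly declines to give a proof, saying that existence/uniqueness and Points~1--2 are ``the same as in \cite{NZ}'' and that Points~3--5 follow from the time change; your chain-rule verifications in both directions of the time change are correct, as is the observation that $s$ and $\sigma$ are mutually inverse, so the reduction is sound modulo the inputs from \cite{NZ}.

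There is, however, a sign error in the ODE comparison you use to establish positivity of $B_t$, which is the crux of Statement~2 and is also what lets you deduce $\int_0^\infty B_v\,dv=\infty$ so that the time change of Statement~4 does not blow up in finite time. You assert $dB_t/dt\leq -B_t^2$ (extra loss of arms to the gel past gelation makes the derivative \emph{more} negative), and then conclude $B_t\geq B_0/(1+B_0t)$ by comparison. That conclusion is backwards: $B'\leq -B^2$ is equivalent to $(1/B)'\geq 1$, which integrates to $1/B_t\geq 1/B_0+t$, i.e.\ the \emph{upper} bound $B_t\leq B_0/(1+B_0t)$, which gives no positivity. Likewise $A'\leq -A$ only yields $A_t\leq A_0 e^{-t}$. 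To get a lower bound by comparison you would need the \emph{reverse} inequality $B'\geq -B^2$, and your heuristic (arms being absorbed into untracked large clusters) actually argues for the direction you wrote, not its reverse. Since you already invoke the explicit representation of $b$ from \cite{NZ} for the continuity and monotonicity of $B$, you should invoke it for strict positivity and the divergence of $\int_0^\infty B_v\,dv$ as well---that is what the paper implicitly does. As written, the comparison step does not close.
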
 

This shows in particular that Equations \eqref{eq:smolu0} and \eqref{eq:smolu} are essentially the same, since they are related by a time-change. Now, as we mentioned, Equation \eqref{eq:smolu0} corresponds informally to a model where pairs of arms are bound at rate 1. Since in our model, we have to wait for two activations to bind a pair of arms, we should observe a time-change at the limit, which turns out to give Equation \eqref{eq:smolu} instead. Indeed, if we denote $c\N_t(a,m)$ for the concentration of clusters with $a$ free arms and mass $m$ in our model, then we have a result similar to that of \cite{FournierMLP}, provided we also assume \eqref{eq:alpha0}. As usual, $\D(\R^+,E)$ is the Skorokhod space of c\`adl\`ag functions on $\R^+$ with values in a Polish space $E$, see \cite{BillingsleyCPM,Kallenberg}.

\begin{theorem} \label{th:convconc}
Assume that \eqref{eq:alpha0} holds and that $\mu\N$ converges weakly to a probability $\mu$, with
\[
\sum_{a \geq 1} a \mu\N(a) \to \sum_{a \geq 1} a \mu(a) \in (0,\pinf).
\]
Then the process $(c\N)$ converges in distribution in $\D(\R^+,\ell^1(S))$ to the unique solution of Smoluchowski's equation \eqref{eq:smolu} started from monodisperse initial conditions $c_0(a,m) = \mu(a) \unn{m=1}$. 
\end{theorem}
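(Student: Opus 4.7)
The plan is a standard martingale / semimartingale decomposition argument in the spirit of Jeon, Norris, and Fournier--Lauren\c{c}ot, adapted to our arm-activation dynamics with threshold $\a(N)$. The first step is to write the Doob decomposition
\[
c^{(N)}_t(p) = c^{(N)}_0(p) + \int_0^t L^{(N)}(p, c_s^{(N)})\,ds + R^{(N)}_t(p) + M^{(N)}_t(p),
\]
where $M^{(N)}(p)$ is a local martingale, $R^{(N)}(p)$ collects $O(1/N)$ discretization errors plus the flux of mass into the gel, and $L^{(N)}(p,c)$ is precisely the bracketed right-hand side of \eqref{eq:smolu} evaluated at $c$. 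The factor $1/(2\la c_s, a\ra)$ emerges naturally from the dynamics: writing $A^{(N)}_s=\la c_s^{(N)}, a\ra$, arms activate at total rate $N A^{(N)}_s$, so the effective time between two successive activations (i.e.\ one coagulation) is of order $2/(N A^{(N)}_s)$, while each activated arm belongs to a cluster of type $p'$ with probability proportional to $a_{p'} c_s^{(N)}(p')$.

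Next I would control the martingale and establish tightness. For bounded $f: S\to\R$, the predictable quadratic variation of $M^{(N),f}_t = \la c_t^{(N)}, f\ra - \la c_0^{(N)}, f\ra - \int_0^t L^{(N)}f(c_s^{(N)})\,ds$ is of order $N^{-1}\int_0^t (\text{polynomial in moments of }c_s^{(N)})\,ds$. The moment hypothesis on $\mu^{(N)}$, propagated through the dynamics by a Gronwall-type estimate on $\la c_t^{(N)}, a\ra$, gives $\sup_{s\leq t}|M^{(N),f}_s|\to 0$ in probability via Doob's inequality. Tightness of $(c^{(N)})$ in $\D(\R^+, \ell^1(S))$ follows from Aldous's criterion applied coordinate-wise (jumps of size $O(1/N)$ with locally bounded rate), combined with the uniform bound $\la c_t^{(N)}, m\ra\leq 1$ and uniform moment control on the arms.

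The subtle step is controlling $R^{(N)}$, specifically the gel-flux produced when a coagulation creates a cluster of size $\geq \a(N)$. Before $\Tgel$ as defined in \eqref{eq:Tgel}, the coupling with a dynamical configuration model presented in Section~\ref{sec:firstgel}, combined with Theorem~\ref{th:CMgiant}, implies that no cluster of size $\geq \a(N)$ appears on $[0,\Tgel-\epsilon]$ with probability tending to one, so the gel-flux vanishes there. On $[\Tgel, T]$, the condition $\a(N)\ll N$ ensures that each gelation event removes a vanishing fraction of mass per particle, while the integrated mass lost converges to the mass loss predicted by \eqref{eq:smolu}; the denominator $\la c_s, a\ra$ stays bounded away from zero thanks to $A_t>0$ for all $t$ (Theorem~\ref{th:wellposed}(2)). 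Any subsequential limit then satisfies the integrated form of \eqref{eq:smolu} with monodisperse initial condition $c_0(a,m)=\mu(a)\unn{m=1}$, and the uniqueness assertion of Theorem~\ref{th:wellposed} identifies it, yielding convergence of the full sequence.

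The main obstacle is the clean identification of the gel-flux through and past $\Tgel$, where one must precisely match the microscopic precipitation events with the macroscopic mass decrease $\frac{d}{dt}\la c_t, m\ra$. A lighter alternative is offered by Theorem~\ref{th:wellposed}(3): the underlying jump chain of cluster configurations is identical in our model and in the pair-activation model of Bertoin--Sidoravicius (only the speeds differ), so $(c^{(N)}_t)$ is a random time change of a pair-activation process. Establishing the analogue of Fournier--Lauren\c{c}ot's convergence in that simpler setup with threshold (a direct extension of \cite{BerSid}) yields convergence to the solution $b$ of \eqref{eq:smolu0}; transporting through the random time change $\sigma$ with $\sigma'=\la b\circ\sigma, a\ra$ then gives $c^{(N)}\to b\circ\sigma$, which by Theorem~\ref{th:wellposed}(3) is exactly the unique solution of \eqref{eq:smolu}.
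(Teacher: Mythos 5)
Your structural skeleton (semi-Markov/Doob decomposition, martingale control, tightness via Aldous, identification of subsequential limits through the uniqueness in Theorem~\ref{th:wellposed}) matches the paper's approach, and the observation about the factor $1/(2\la c_s, a\ra)$ arising from the two-activation timing is correct. However, your proposed mechanism for controlling the gel-flux term $R^{(N)}$ does not work under the theorem's hypotheses. You invoke the DCM coupling and Theorem~\ref{th:CMgiant}, but that theorem requires a uniform second moment bound, $\sup_N \sum_k k^2 \pi^{(N)}(k) < \infty$, whereas the present theorem assumes only convergence of the first moment. Worse, when $m_2^\mu = \infty$ the definition \eqref{eq:Tgel} gives $\Tgel = 0$, so your ``before $\Tgel$'' interval is empty and your argument is vacuous there; and the ``after $\Tgel$'' step (matching microscopic precipitation to the macroscopic mass loss) is precisely the hard part, which you state as a goal rather than prove.

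The paper does not separate a gel-flux remainder at all. Instead, following Fournier--Lauren\c{c}ot and \cite{MN0}, the generator $L^{(N)}$ is written directly on the truncated index set $I_N = \bN \times \{1,\dots,\a(N)-1\}$, and the single estimate \eqref{eq:boundconv},
\begin{equation*}
\E \left ( \int_0^t \frac{1}{\An_s - 1/(2N)} \left ( \sum_{a \geq b} \sum_{m=1}^{\a(N) - 1} a c\N_s(a,m) \right )^2 \ds \right ) \leq K \left ( \frac1b + \frac{\a(N)}{N} \right ),
\end{equation*}
obtained by taking $f(\eta) = \sum_{a,m}(a\wedge b)\eta(a,m)$ in the martingale, controls simultaneously the quadratic variation of $\MNf$ and the discrepancy between $L^{(N)}$ and the limiting generator (including gel losses). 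This route needs no configuration-model input, no case split around $\Tgel$, and only the first-moment hypothesis, precisely because $K$ depends only on the initial number of arms. If you want to save your argument, you should replace the Theorem~\ref{th:CMgiant}-based control by a bound of this type. As for your alternative route via a time-change of the Bertoin--Sidoravicius chain: it is a valid reformulation, but the convergence of the pair-activation model \emph{with a threshold $\a(N)$} is not established in \cite{BerSid} (that paper has no threshold), so you would still have to prove a statement of the same difficulty, gaining nothing.
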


This justifies part of the title of this paper, in that our model is indeed a microscopic model for (the modified) Smoluchowski equation with limited aggregations.

\subsubsection{Precise results in the critical window} \label{sec:critwindow}

Our second interest is the configuration model. We already saw in the Section \ref{sec:firstgel} that results on the CM can provide information on our model. More precisely, the divergence between our model and a DCM occurs when a large component appears. It therefore makes sense to study the largest components in a CM. One naturally expects that, in the supercritical case, more than one gelation event will occur, and actually that an order $\a(N)/N$ will. Hence, we will actually repeat the argument of Section \ref{sec:firstgel} over all these gelation events, so that we need to be more precise and control the probability with which they take place. Moreover, we will have to accurately control the degree of the vertices in these components. One of the weakness of the result of \cite{JL}, Theorem \ref{th:JL}, is that it is not at all quantitative and does not allow this. The strength of this result, however, is that the proof can be adapted to show a more precise statement. This is similar to what we do for the Erd\H{o}s-R\'enyi graph in \cite{MN}. 

We shall here consider a CM with parameters $n$ and $\pi\n$. The important difference with Theorem \ref{th:JL} is that we do not really want to see these quantities as being $n \to \pinf$, $\pi\n \to \pi$, but merely as being part of some ``compact'' set, and obtain results for fixed $n$ and $\pi\n$, with probabilities which are uniform for all the parameters in this set.

Let us introduce
\[
\phi_n(x) = \sum_{k \geq 0} v_k(\cC_1(n,\pi\n)) x^k, \quad w_n = \sum_{k \geq 0} k^3 v_k(\cC_1(n,\pi\n)),
\]
and the natural $\| \cdot \|_k$ norm on $C^k$ functions on $[0,1]$ or $[0,1]^2$ (which should be clear from the context) defined by
\begin{equation} \label{eq:Cknorm}
\|f\|_k = \sum_{i=0}^k \sup_{x \in [0,1]} \left | D^i f(x) \right |.
\end{equation}
Denote $G_n = G_{\pi\n}$ the generating function of $\pi\n$, $m\n_i = m_i^{\pi\n}$ its factorial moments, and let
\[
F_n(x) = x G_n'(x) = \sum_{k \geq 1} k \pi\n(k) x^k,
\]
which is, up to $m_1\n$, the generating function of $\hat{\pi}\n$. Finally, let
\[
\g_n = \g^{\pi\n} = m_2\n - m_1\n.
\]

To explain what our ``compact'' set is, we fix two positive sequences $(\eps_n^-)$ and $(\eps_n^+)$ with
\begin{equation} \label{eq:epsnpm}
n^{-1/4} \ll \eps_n^- \ll \eps_n^+ \ll 1,
\end{equation}
as well as three constants $\eta, m, M \in (0,\pinf)$. Our main assumptions will be that
\begin{equation} \label{eq:degreeass}
\eps_n^- \leq \g_n \leq \eps_n^+, \quad m \leq m\n_3, \quad \sum_{k \geq 1} \pi\n(k) k^{4 + \eta} \leq M.
\end{equation}

We may then prove the following result.

\begin{theorem} \label{th:JLimproved}
Consider a sequence $(\pi\n)$ such that \eqref{eq:degreeass} holds. Then, for any $\dl > 0$, there exists a constant $K > 0$, depending only on $\dl$, $\eta$, $m$, $M$ and $(\eps_n^{\pm})$ such that, with probability greater than
\[
1 - K \left ( \frac{1}{\sqrt{n \g_n^3}} + \g_n^{1+\eta} \right )
\]
we have
\[
\left ( 2 \frac{m\n_1}{m\n_3} - \dl \right ) n \g_n \leq v(\cC_1(n,\pi\n)) \leq \left ( 2 \frac{m\n_1}{m\n_3} + \dl \right ) n \g_n, \quad v(\cC_2(n,\pi\n)) \leq \dl n \g_n,
\]
and
\[
\left \|\frac{1}{n\g_n} \phi_n - \frac{2}{m\n_3} F_n \right \|_2 \leq \dl, \quad \frac1n \sum_{k \geq 0} k^3 v_k(\cC_1(n,\pi\n)) \leq \dl.
\]
\end{theorem}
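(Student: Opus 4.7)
The plan is to extend and quantify the exploration-based argument of Janson and Łuczak \cite{JL}. Reveal $CM(n,\pi\n)$ by a sequential pairing process: start from a uniformly chosen vertex, successively select an active half-edge, pair it to a uniformly chosen free half-edge, and absorb into the exploration the vertex newly touched (if any). Let $S_n(t)$ be the walk counting active half-edges after $t$ pairing steps, and let $V^{(k)}_n(t)$ count the vertices of degree $k$ whose first half-edge was revealed in the first $t$ steps. A connected component is exhausted precisely at the successive zeros of $S_n$, so the size $v(\cC_1\n)$ equals the duration of the longest excursion of $S_n$ and the degree profile $(v_k(\cC_1\n))_k$ reads off the increments of $V_n^{(k)}$ along that excursion.

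First I would write a Doob decomposition $S_n(t) = M_n(t) + A_n(t)$ and similarly for each $V_n^{(k)}$. Direct combinatorial computation shows that $A_n(t)$ tracks a smooth deterministic quantity $\alpha_n(t)$ whose derivative is essentially $\widehat\pi\n$-mean minus one, with the density of remaining half-edges slowly decreasing by size bias. Expanding to third order in $t/(n m_1\n)$ around $t=0$ yields $\alpha_n(t) = \gamma_n t - \frac{m_3\n}{2 (m_1\n)^2} \, t^2/n + O(t^3/n^2)$, which has a positive maximum of order $n\gamma_n^2$ and returns to $0$ near $t^\star_n := 2 n\gamma_n \, m_1\n/m_3\n$. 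The key quantitative input is Freedman's martingale inequality applied to $M_n$ and to each $M_n^{(k)}$: the quadratic variation over $[0, t^\star_n]$ is of order $n\gamma_n$, so for any $\delta > 0$,
\[
\pp\!\left(\sup_{t \le 2 t^\star_n}|M_n(t)| > \delta n\gamma_n\right) \le 2 \exp\!\left(-c\, \delta^2 n\gamma_n / (\text{truncation level})\right) + \pp(\text{large-degree vertex revealed}).
\]
Truncating degrees at a level $L_n \asymp \gamma_n^{-1/(1+\eta)}$ and using the $(4+\eta)$-moment hypothesis controls the bad event by $O(\gamma_n^{1+\eta})$, while the Gaussian tail at this level gives the $O(1/\sqrt{n\gamma_n^3})$ contribution once $\delta$ is fixed.

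With uniform concentration of $S_n$ around $\alpha_n$ on $[0, 2 t^\star_n]$ in hand, deduce: (i) $S_n$ stays strictly positive on a subinterval of length $(1\pm\delta) t^\star_n$ and becomes negative shortly after; (ii) during any interval of length $\delta t^\star_n$ disjoint from this long excursion, $S_n$ has fluctuations of order $\sqrt{n\gamma_n} \ll \delta n\gamma_n$, so no competing excursion of length $\ge \delta n\gamma_n$ exists. This gives simultaneously the bounds on $v(\cC_1\n)$ and $v(\cC_2\n)$. For the generating-function statement, apply the same concentration to each $V_n^{(k)}$ individually: the fluid limit of $V_n^{(k)}(t^\star_n)/(n\gamma_n)$ equals $2k\pi(k)/m_3\n$, hence the pointwise statement. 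To upgrade pointwise convergence to convergence in $\|\cdot\|_2$, use the uniform tail bound $\sum_{k \ge L} k^{4+\eta}\pi\n(k) \le M L^{-\eta}$: the coefficients $k^j v_k(\cC_1\n)/(n\gamma_n)$ for $j \le 2$ are uniformly summable past any cutoff $L$, so cutoff plus the $O(L)$ pointwise estimates gives uniform control of $\phi_n$ and its first two derivatives on $[0,1]$. The third-moment bound $\frac1n\sum k^3 v_k(\cC_1\n) \le \delta$ follows from the same tail bound together with the pointwise estimate for small $k$.

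The main technical obstacle is handling the drift of $A_n$ accurately enough over the whole time scale $n\gamma_n$ while accounting for the \emph{feedback} from size-biased depletion of half-edges: the increment distribution at step $t$ depends on $(V_n^{(k)}(t))_k$, so $S_n$ and the $V_n^{(k)}$'s form a coupled chain. Handling this requires a bootstrap — first apply a crude concentration to show that the empirical degree distribution of revealed vertices deviates by at most $o(\gamma_n)$ from its deterministic counterpart on $[0,2 t^\star_n]$, then plug this back into the Doob decomposition to obtain the sharp drift $\alpha_n(t)$ up to the additive error $\delta n\gamma_n$. A second delicate point is that the probability bound must be \emph{uniform} over the class of $(\pi\n)$ satisfying \eqref{eq:degreeass}; this forces every estimate to depend on $(\eps_n^{\pm},m,M,\eta)$ only through those parameters, so all constants introduced (truncation levels, Freedman constants, tail bounds) must be traced explicitly.
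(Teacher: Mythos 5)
Your route differs genuinely from the paper's. The paper follows Janson--\L uczak's \emph{continuous-time} construction: half-edges carry i.i.d.\ $\cE(1)$ lifetimes, and the key quantities $\cV_k(t)$ (vertices of degree $k$ with all lifetimes $>t$), $\cW(t)=\sum k\cV_k(t)$ and $B(t)$ are pure death processes, entirely independent of the exploration order. The exploration only enters through the difference $\cW-W$, which increases only at a step C1 and is therefore bounded deterministically by the running minimum of $\cY$ plus the maximal degree. This makes the drift-control and the error term come from elementary $L^2$/$L^1$ moment estimates for death processes followed by Markov's inequality, with \emph{no feedback issue at all}. By contrast, your discrete-time pairing chain has the feedback built in (the increment law at step $t$ depends on the revealed degree profile), which is precisely the obstacle you flag and propose to address by a bootstrap; this is a real technical expense that the continuous-time construction simply avoids, and you do not actually carry out the bootstrap. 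So while your Doob-decomposition-plus-Freedman plan is a legitimate alternative, the paper buys a substantially cleaner drift analysis by the continuous-time device.

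Beyond the route difference, there are two concrete gaps. First, your argument for $v(\cC_2\n)\le\dl n\g_n$ only controls excursions inside the window $[0,2t^\star_n]$ where the parabola estimate holds; it does not rule out a large component being explored \emph{after} that window. The paper handles this with a separate, elegant size-bias argument: components are revealed in a size-biased order, so conditionally on the existence of two components of size $\gtrsim n\g_n$, with uniformly positive probability the giant $\cC_0$ is \emph{not} explored first, contradicting the high-probability excursion picture; this converts the concentration estimate into a bound on $\P(\text{two large components})$. Your proposal has no analogue of this step. Second, your truncation level $L_n\asymp\g_n^{-1/(1+\eta)}$ looks off: with a $(4+\eta)$-th moment the tail beyond $L$ contributes $\sum_{k>L}k\,\pi\n(k)\le M L^{-(3+\eta)}$ per vertex, so $n\sum_{k>L}k\pi\n(k)\le M n L^{-(3+\eta)}$, and after normalizing by $n\g_n^2$ the paper's choice $L\asymp 1/\g_n$ yields exactly $\g_n^{1+\eta}$; with your smaller $L_n$ the ``large-degree revealed'' event is not $O(\g_n^{1+\eta})$. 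Also note the statement requires $\eps_n^-\gg n^{-1/4}$ rather than $n^{-1/3}$, and the paper uses precisely this slack (together with $\max\{i:\pi\n(i)\neq 0\}\le (Mn)^{1/(4+\eta)}$ and a geometric bound on the number of C1 steps before $\cC_0$) to control $\sum_k k^2|V_k-\cV_k|$ and obtain the $\|\cdot\|_2$ bound and the third-moment bound; your pointwise-in-$k$-then-sum sketch would need the same accounting to be made explicit for the constants to trace uniformly.
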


The interpretation of this result is the same as for Theorem \ref{th:JL}. Let us however list the differences with the latter.
\begin{itemize}
\item We require a bit more on $\g_n$, namely that it is bounded below by $n^{-1/4}$, not just $n^{-1/3}$, see \eqref{eq:epsnpm}. There is a trade-off here between this assumption and the moments assumptions on $(\pi\n)$. For instance, if we only have $\eps_n^- \gg n^{-1/3}$, the result holds in the $\| \cdot \|_1$ sense, and we have thus an actual strengthening of the result of \cite{JL}. On the other hand, to get the result in the $\| \cdot \|_2$ sense while just assuming $\eps_n^- \gg n^{-1/3}$, the proof would require $6 + \eta$ moments. In any case, since we will anyway need to assume \eqref{eq:alpha}, this result will be sufficient for us.
\item The properties of the components are given for fixed $n$, not depending on the limiting quantities. Specifically, $\pi\n$ does not need to have a limit, as long as it does not become too degenerate.
\item The number of vertices of degree $k$ is controlled very precisely, since this bound in the $\| \cdot \|_2$ sense translates as
\[
\sum_{k \geq 0} k^2 \left | \frac{1}{n\g_n} v_k(\cC_1(n,\pi\n)) - \frac{2}{m\n_3} k \pi\n(k) \right | \leq \dl.
\]
\item The last part of the result allows to control the third moment, but with much less precision. It is however enough to say that the measure of degrees of the particles which are not in the largest component still have a positive third factorial moment.
\item The probability is uniform for a large set of parameters.
\end{itemize}

These considerations will turn out to be of great importance for several reasons. First, this result allows us to control precisely which vertices fall and which remain in solution after a gelation event. Secondly, we already emphasized that we want to employ this result repeatedly: we will use it not only for $N$, but for all $\eps N \leq n \leq N$ particles (what also explain the switch of notation from $N$ to $n$), as well as for different $\pi\n$, for which we cannot \emph{a priori} tell whether they converge or not. Finally, this result will be of interest to us when $\g_n$ is of order $\a(N)/N$ (see the analysis in Section \ref{sec:firstgel} above), so that we will use it about $N/\a(N)$ times. But in that case, we will get results with probability at least
\[
1 - K \frac{N}{\a(N)} \left ( \frac{1}{\sqrt{N (\a(N)/N)^3}} + \left ( \frac{\a(N)}{N} \right )^{1+\eta} \right ),
\]
and this goes to 1 exactly because we assume \eqref{eq:alpha}. This is the same reason why a stronger condition for the threshold $(\a(N))$ appears in \cite{MN0}.

\subsubsection{Controlling several gelation events: alternative model}

Our final final interest is of course our discrete model. At any given time, the particles along with their bounds naturally define a graph structure. Our main goal is to study the graph involving the particles \emph{in solution}. Note indeed that the way that we deal with the gel is somewhat arbitrary: it consists of a collection of clusters of size between $\a(N)$ and $2 \a(N)$, and we decide that these clusters are inert, but we could also decide to let them coalesce among themselves, what would not affect the dynamic. Their behavior is thus essentially irrelevant. More importantly, the interesting SOC occurs in solution.

With large probability, Theorem \ref{th:JLimproved} allows to control precisely what happens at the first gelation event. The informal idea of the proof is to be able to repeat this argument over all gelation events. To make the argument rigorous, we will {\em couple} our dynamics with that of an alternative model where, at all times, the configuration is that of an appropriate configuration model. 

Details are left to Section \ref{sec:altmodel}, but in short, the alternative model allows us to see our process as pieces of DCM pasted together, every time with different parameters. More precisely, we will condition at the configuration in solution after a gelation event, that is, $G_S(\tau_k)$. The first step is to know precisely what this configuration is. To this end, denote $S(t)$ the \textbf{set of particles in solution} at time $t$, and $B(t)$ the \textbf{total number of activated arms in solution} at $t$. For $S \subset [N] = \{1,2,\dots,N\}$ and $B \in \bN$, we define a configuration model $CM(S,B)$ as a uniform choice of $B$ arms of the particles in $S$, followed by a uniform pairing of these arms. Alternatively, we can choose a uniform sequence of $B$ arms and link the $2 i - 1$-th to the $2i$-th for $i = 1, \dots, \lfloor B /2 \rfloor$. We denote $CM'(S,B)$ the graph $CM(S,B)$ \emph{conditioned on having no large component}, that is we choose a sequence of $B$ arms, uniformly out of all the sequences such that the corresponding pairing does not give a large component. Be wary that this is different from choosing first $B$ arms, then a uniform ordering out of all those that give no giant component.

Finally, we say that a stopping time $\tau$ is a \emph{gelation stopping time} if, informally, it depends only on the particles in the gel. See Section \ref{sec:combin} for a precise definition. What is important is that all the gelation times $\tau_k$ are gelation stopping times. Our main structure result is the following.

\begin{lemma} \label{lem:combiB-int}
If $\tau$ is a gelation stopping time, then conditionally on $S(\tau)$ and $B(\tau)$, the configuration in solution $G_S(\tau)$ has the distribution of a $CM'(S(\tau),B(\tau))$.
\end{lemma}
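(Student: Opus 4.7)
The plan is to establish the slightly stronger statement that, conditionally on the entire gel history $\cF^g_\tau$---the $\sigma$-algebra generated by the gelation times $\tau_j \le \tau$ and the identities of the associated gel clusters $L_j$---together with $B(\tau)$, the in-solution configuration $G_S(\tau)$ has the distribution of a $CM'(S(\tau), B(\tau))$. Since this conditional law depends on $\cF^g_\tau$ only through $S(\tau)$, averaging over $\cF^g_\tau$ given $(S(\tau), B(\tau))$ yields the lemma. The proof proceeds by induction on the number of gelation events occurring by time $\tau$.

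The base case $\tau<\taugel$ is immediate: before gelation the process coincides with the dynamic configuration model $\cG$, so $G_S(\tau)=\cG(\tau)$ is a uniform pairing of $B(\tau)$ arms of $[N]$ conditioned on the absence of a large component, which is $CM'([N], B(\tau))$ by definition. For the inductive step at $\tau=\tau_k$, the in-solution dynamics on $[\tau_{k-1}, \tau_k^-)$ is a dynamic CM started from $G_S(\tau_{k-1}) \sim CM'(S(\tau_{k-1}), B(\tau_{k-1}))$ (the inductive hypothesis). A short extension principle---that appending uniformly chosen arms to a $CM'$ configuration and conditioning on no new large component produces a $CM'$ with the updated arm count, by monotonicity of components under edge addition together with the uniformity of the order of appended arms---gives $G_S(\tau_k^-) \sim CM'(S(\tau_{k-1}), B(\tau_k^-))$. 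At $\tau_k$ the critical pairing is formed, creating $L_k$; the resulting full configuration $G^\star$ on $S(\tau_{k-1})$ is, conditional on $\cF^g_{\tau_k}$, a uniform sequence of arms of $S(\tau_{k-1})$ whose consecutive pairing produces its first large component, precisely $L_k$, at the final step.

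The combinatorial heart of the argument is the observation that, $L_k$ being a connected component of $G^\star$, every pair of $G^\star$ lies entirely inside $L_k$ or entirely inside $S(\tau_k) := S(\tau_{k-1}) \setminus L_k$. Writing $\pi_S$ and $\pi_L$ for the induced pairings on $S(\tau_k)$ and $L_k$, and $M = |\pi_S|+|\pi_L|$ for the total number of pairs, the number of arm-sequences producing $(\pi_S, \pi_L)$ factors as
\[
\binom{M-1}{|\pi_S|} \cdot N(\pi_L) \cdot 2^{|\pi_L|} \cdot |\pi_S|! \cdot 2^{|\pi_S|},
\]
where the binomial counts the interleavings of $S$- and $L$-positions with the final position an $L$-pair (the $L_k$-creating one), $N(\pi_L)$ counts orderings of the edges of $\pi_L$ whose proper prefixes do not yet form $L_k$, and the remaining terms account for within-pair orientations and for the ordering of the $S$-pairs among their positions. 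The dependence on $\pi_S$ is only through its cardinality, which is fixed by $B(\tau_k)$; summing over $\pi_L$ thus yields a count constant in $\pi_S$, so $G_S(\tau_k) = \pi_S$ is uniform on pairings of $B(\tau_k)$ arms in $S(\tau_k)$ without a large component, i.e., $CM'(S(\tau_k), B(\tau_k))$. For a general gelation stopping time with $\tau\in(\tau_k,\tau_{k+1})$, one more application of the extension principle from $\tau_k$ to $\tau$, together with the $\cF^g$-measurability of $\tau$ on this event and the identity $S(\tau)=S(\tau_k)$, propagates the $CM'$ structure to $\tau$ and closes the induction.

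The main anticipated obstacle is the careful tracking of the unique waiting in-solution arm through the decomposition: it lies in $L_k$ at $\tau_k^-$, making $B(\tau_k^-)$ odd, and joins $L_k$ via the critical pairing, making $B(\tau_k)$ even. Parity must be handled correctly to match the $CM'$ convention (an odd arm count leaves the last arm unpaired, corresponding to a waiting arm), but this bookkeeping does not affect the structural outline above.
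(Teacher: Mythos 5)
Your proof takes a genuinely different route from the paper's. For the deterministic-time case (Lemma \ref{lem:combiB1}), the paper argues in a single pass by splitting the event $\{S(t)=S,\ \s_S(t)=\ell\}$ over the arms activated in $\bar S$ and exhibiting a direct bijection that replaces $\ell$ by $\ell'$ while leaving the gel subsequence $\bar\ell$ fixed; the extension to gelation stopping times is then deferred to the argument of \cite{MN0}. You instead induct over gelation events, combining a $CM'$-extension principle with an explicit counting factorization of arm-sequences at each gel event. That factorization --- depending on $\pi_S$ only through $|\pi_S|$, and vanishing when $\pi_S$ itself has a large component --- is correct and gives a transparent alternative explanation of why the in-solution configuration stays a conditioned configuration model.

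However, your treatment of the stopping-time aspect has a gap. Both the base case (``$\tau<\taugel$ is immediate'') and the final step from $\tau_k$ to a general $\tau\in(\tau_k,\tau_{k+1})$ appeal to $\cF^g$-measurability of $\tau$. The paper's notion of a gelation stopping time is a \emph{conditional-independence} condition: given $\{S(t)=S\}$, $\tau\one{\tau\le t}$ is independent of $\cF^S_t$. This is neither implied by nor equivalent to $\cF^g_\tau$-measurability (a gelation stopping time may involve auxiliary randomness independent of both $\cF^S$ and the gel history). It is precisely the conditional independence, not measurability, that guarantees conditioning on $\tau$ does not bias the in-solution arm sequence; without invoking it, your extension principle cannot be applied as stated, and the base case is already not immediate for non-deterministic $\tau$ (you must argue that conditioning on $\{\tau<\taugel\}$ and on $B(\tau)$ leaves the sequence uniform on $L'_{[N],B(\tau)}$). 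A secondary, more local imprecision: $N(\pi_L)$ should count orderings of the edges of $L_k$ whose proper prefixes create \emph{no large component}, not merely ``do not yet form $L_k$'' --- a proper prefix could already contain a large connected subgraph of $L_k$, which would have triggered gelation earlier and contradicted $L_k$ being the $k$-th gel cluster.
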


We can now come back to our reasoning. We want to condition on the configuration $G_S(\tau_k)$ right after a gelation time $\tau_k$, and recognize this configuration as a DCM on $S(\tau_k)$. We therefore look at this DCM at the time when we see $B(\tau_k)$ activated arms, at which we exactly have an \emph{unconditioned} $CM(S(\tau_k), B(\tau_k))$. We have then three more steps to take.
\begin{enumerate}
\item Check that the conditioning has high probability. This can be done because, thanks to Theorem \ref{th:JLimproved}, we know very precisely the particles that remain in solution after a gelation.
\item Study what happens before the next gelation time $\tau_{k+1}$. To figure this out, the previous result allows us to consider our model as a DCM started from an initial configuration with the particles $S(\tau_k)$. We can then use directly Theorem \ref{th:JLimproved}, or actually the dynamic results of Section \ref{sec:gelDCM}.
\item Iterating this reasoning  allows to describe our model in between gelation events, and will yield the results that we explain in the next section.
\end{enumerate}

On a side note, there is a similar result to Lemma \ref{lem:combiB1} and \ref{lem:combiB2}, that will not be used in the bulk of the proofs, and which gives another description of the structure of our graph. Recall that $N_t$ is the number of particles in solution, and $\pi_t$ their empirical distribution of activated arms.

\begin{lemma} \label{lem:combipi-int}
Conditionally on $N_t$ and $\pi_t$, the configuration in solution $G_S(t)$ has the distribution of a $CM(N_t,\pi_t)$, conditioned on having no large cluster.
\end{lemma}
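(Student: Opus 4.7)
The plan is to deduce the statement from Lemma~\ref{lem:combiB-int} by a refinement-and-averaging argument. First, any deterministic time $t$ is (trivially) a gelation stopping time in the sense of Section~\ref{sec:combin}, since the constant $\sigma$-algebra sits inside the gel filtration. Applying Lemma~\ref{lem:combiB-int} with $\tau = t$ yields: conditionally on $S(t)$ and $B(t)$, the graph $G_S(t)$ has the distribution of $CM'(S(t), B(t))$, that is, a uniform sequence of $B(t)$ arms from particles in $S(t)$, paired successively and conditioned on producing no cluster of size $\geq \alpha(N)$.

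Next I would refine the conditioning to include the full degree sequence $(K_v)_{v \in S(t)}$, where $K_v$ is the number of activated arms of $v$ at time $t$. Note that $\sum_v K_v = B(t)$ and $\pi_t(k) = N_t^{-1} \#\{v \in S(t) : K_v = k\}$, so $(N_t, \pi_t)$ is a coarsening of $(S(t), (K_v))$. A direct count on sequences of arms shows that, under $CM'(S(t), B(t))$, every pairing $g$ of $B(t)$ arms (with no large cluster) is realized by exactly $(B(t)/2)! \, 2^{B(t)/2}$ sequences -- a constant depending on neither $g$ nor the induced degree sequence. Consequently, conditionally on $S(t)$, $B(t)$, and $(K_v)$, the graph $G_S(t)$ is uniform over multigraphs on $S(t)$ with degree sequence $(K_v)$ and no large cluster; equivalently, $G_S(t)$ has the law of $CM(S(t), (K_v))$ conditioned on no large cluster.

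Finally, since graphs are considered up to isomorphism, the distribution of $CM(S(t), (K_v))$ conditioned on no large cluster depends on its parameters only through $N_t = |S(t)|$ and the empirical distribution $\pi_t$ of $(K_v)$: the particular identities of the vertices in $S(t)$ and which of them carries which degree are irrelevant. Hence, conditionally on $(N_t, \pi_t)$, $G_S(t)$ already has the prescribed $CM(N_t, \pi_t)$-conditioned distribution, and averaging over all $(S(t), (K_v))$ compatible with $(N_t, \pi_t)$ leaves it unchanged. The one step that requires care is the sequence-to-pairing counting that produces the uniform-on-valid-pairings statement from the sequence-uniform definition of $CM'$; once that is established, the passage to the coarser conditioning is automatic thanks to the isomorphism convention.
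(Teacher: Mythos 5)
Your route — apply Lemma~\ref{lem:combiB-int} at the deterministic time $t$, refine the conditioning to the full activated-degree sequence $(K_v)$, then coarsen to $(N_t,\pi_t)$ by exchangeability — is essentially the paper's: the paper simply says the proof is ``done in a similar fashion'' to that of Lemmas~\ref{lem:combiB1}--\ref{lem:combiB3}, which is exactly the refine/sum-over-$S$-and-exchangeability step you spell out. Your counting of $(B(t)/2)!\,2^{B(t)/2}$ sequences per pairing, and the observation that the choice of which specific arms of each vertex are activated is independent of the pairing and invisible to the graph structure, are the right way to pass from ``uniform over valid sequences'' to the claimed distribution.

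One intermediate claim is wrong and should be deleted: it is \emph{not} true that ``conditionally on $S(t)$, $B(t)$, and $(K_v)$, the graph $G_S(t)$ is uniform over multigraphs on $S(t)$ with degree sequence $(K_v)$ and no large cluster.'' The configuration model is a uniform choice of \emph{pairing} of half-edges, and the map from pairings to multigraphs is not constant-to-one in the presence of self-loops or multiple edges (a pairing realizing a multigraph $G$ has a fiber of size $\prod_v K_v!$ divided by a $G$-dependent product of multiplicities). Consequently, $CM(N_t,\pi_t)$ conditioned on no large cluster is \emph{not} uniform over multigraphs, and ``equivalently'' is the wrong connective. The fix is to skip the ``uniform over multigraphs'' statement entirely: your counting argument directly shows that, conditionally on $(K_v)$ (and no large cluster), the \emph{pairing} of activated arms is uniform over admissible pairings, and that is literally the definition of the conditioned $CM(S(t),(K_v))$; the final coarsening to $(N_t,\pi_t)$ is then unchanged.
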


In this case, we deal with the classical CM that we explained before. The meaning of the conditioning is thus clear: we take a uniform pairing of all the arms, out of all the pairings that do not create a large component. We will come back to this result in Section \ref{sec:SOC} below.

\subsubsection{Tightness and limit}

The coupling with the alternative model has two major consequences. First, it allows us to precisely control the time between two gelation events. In particular, we will see that there exists $c>0$ such that, w.h.p.,
\[
\inf_{k \geq 0} \tau_{k+1}\N - \tau_k\N \geq c \frac{\a(N)}{N},
\]
at least for the $\tau_k$ in a compact interval. This easily ensures tightness of our processes, as we now state. For $d \geq 1$, the space $\ell^1_{\b}(\bN^d)$, is the space of sequences in $\bN^d$ with finite $\| \cdot \|_{\b}$-norm, where
\begin{equation} \label{eq:normbeta}
\| u \|_{\b} = \sum_{(k_1,\dots,k_d) \in \bN^d} (1 + k_1^{\b} + \dots + k_d^{\b}) | u(k,r) |,
\end{equation}
endowed with this norm. With no index $\| \cdot \| = \| \cdot \|_0$. Recall that $p\N_t(k,r)$ is the concentration of particles that are in solution at time $t$, with degree $r$ and $k$ activated arms.

\begin{lemma} \label{th:tightness}
Assume that Assumptions \ref{as:mu5} and \ref{as:alpha} hold. Then the sequence $(n\N)$ is tight in $\D(\R^+,\R)$, and any limiting point is locally Lipschitz-continuous. Moreover, for any $\b < 5 + \eta$, the sequence $(p\N)$ is tight in $\D(\R^+,\ell^1_{\b}(\bN^2))$, and any limit point is continuous.
\end{lemma}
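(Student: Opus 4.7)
The plan is to handle $n\N$ and $p\N$ separately, using the coupling with the alternative model of Section \ref{sec:altmodel} and the quantitative control of Theorem \ref{th:JLimproved}. The two key inputs drawn from Section \ref{sec:altmodel} are that, with high probability on any compact time interval, consecutive gelation times $\tau_k\N$ are spaced by at least $c\a(N)/N$ for some $c>0$, and that at each such event the gelated cluster has size in $[\a(N),2\a(N)]$ with a degree profile described by Theorem \ref{th:JLimproved}.

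For $n\N$: this process is nonincreasing, bounded in $[0,1]$, constant between gelation events, and drops by at most $2\a(N)/N$ at each of them. On any $[0,T]$, the number of such events is at most $TN/(c\a(N))+O(1)$ w.h.p., so for any $s<t \leq T$,
\[
n\N_s - n\N_t \leq \frac{2\a(N)}{N}\left( \frac{(t-s)N}{c\a(N)} + 1 \right) = \frac{2(t-s)}{c} + o(1).
\]
This equi-Lipschitz bound combined with uniform boundedness gives tightness of $(n\N)$ in $\D(\R^+,\R)$, and any subsequential limit is $(2/c)$-Lipschitz.

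For $p\N$, compact containment in $\ell^1_\b(\bN^2)$ is immediate from Assumption \ref{as:mu5}: since $k \leq r_v$, we have the deterministic bound $\|p\N_t\|_\b \leq 1 + 2m_\b^{\mu\N}$, and
\[
\sum_{k+r>M}(1+k^\b+r^\b) p\N_t(k,r) \leq C\sum_{r > M/2} r^\b \mu\N(r) \leq CM^{\b-(5+\eta)} m_{5+\eta}^{\mu\N}
\]
vanishes uniformly in $N,t$ as $M\to\infty$, provided $\b < 5+\eta$. For the modulus of continuity, decompose $p\N_t - p\N_s = \Delta^{\mathrm{act}}_{s,t} + \Delta^{\mathrm{gel}}_{s,t}$ along arm activations and gelation events. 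Each arm activation on a particle of degree $r$ contributes at most $C(1+r^\b)/N$ to $\|\cdot\|_\b$, and a given particle $v$ has some arm activating in $(s,t)$ with probability at most $r_v(t-s)\wedge 1$. Splitting the expected sum
\[
\ee\bigl\|\Delta^{\mathrm{act}}_{s,t}\bigr\|_\b \leq \frac{C}{N}\sum_v (1+r_v^\b)\bigl(r_v(t-s) \wedge 1\bigr)
\]
at $R = (t-s)^{-1/(5+\eta)}$ between low- and high-degree particles yields $\ee\|\Delta^{\mathrm{act}}_{s,t}\|_\b = O((t-s)^{(5+\eta-\b)/(5+\eta)})$. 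For gelations, the total number of particles gelated in $(s,t)$ is at most $2(t-s)N/c$, so the same truncation bounds $\sum_{v\text{ gel in }(s,t)}r_v^\b$ by $R^\b \cdot 2(t-s)N/c + NMR^{\b-(5+\eta)}$, giving again $\ee\|\Delta^{\mathrm{gel}}_{s,t}\|_\b = O((t-s)^{(5+\eta-\b)/(5+\eta)})$. Combining, we obtain an Aldous-type modulus of continuity which, with compact containment, yields tightness of $(p\N)$ in $\D(\R^+,\ell^1_\b(\bN^2))$. Since a single gelation jump has $\|\cdot\|_\b$-size $O((\a(N)/N)^{(5+\eta-\b)/(5+\eta)})\to 0$ by the same truncation, every subsequential limit has continuous paths.

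The hard part is the tail control of the gelated cluster's degree profile for $\b$ approaching $5+\eta$: Theorem \ref{th:JLimproved} pins down only the low-order moments of this profile, and the contribution of atypically high-degree particles must be absorbed via the Markov-type truncation above. That argument balances the bulk cluster size of order $\a(N)$ against the rarity of high-degree particles granted by Assumption \ref{as:mu5}, and the matching exponent $5+\eta$ is exactly what forces the restriction $\b < 5+\eta$ in the statement.
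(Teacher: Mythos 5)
Your proof is correct but takes a genuinely different route from the paper's. The paper first proves tightness in the weak topology (variations of $\phi\N$ in $C^0$, equivalently of the underlying measures in $\ell^1$, are of order $s$ on an interval of length $s$, since they change only at the $\sim sN/\a(N)$ gelation events, each of size $2\a(N)/N$), and then upgrades to the stronger topology by appealing to the Hölder-interpolation Lemma \ref{lem:strongconvproc}, which is exactly why that abstract preliminary lemma was set up in Section \ref{sec:prelim}. You instead establish the modulus of continuity directly in $\ell^1_\b$ by a truncation at a degree threshold $R=R(t-s)$, bounding separately the low-degree contribution (many particles, small weight) against the high-degree tail (few particles by Assumption \ref{as:mu5}, large weight), and doing so for the two sources of change --- arm activations and gelation events --- which the paper only handles somewhat implicitly (the proof in Section \ref{sec:limit} is written for $\phi\N$, which is insensitive to arm activations; the activation part is deferred to a brief remark at the end of the proof of Proposition \ref{prop:PDE}). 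Your route is more self-contained and makes the role of the exponent $5+\eta$ transparent, at the price of redoing the moment/truncation bookkeeping by hand; the paper's route is shorter and modular. Two small things to tighten: the bound $\ee\|\Delta^{\mathrm{gel}}_{s,t}\|_\b$ mixes a with-high-probability estimate on the number of gelated particles with an expectation --- this is harmless (work on the good event, whose complement has vanishing probability as $N\to\infty$, which is enough for Aldous' criterion) but should be stated that way; and your choice $R=(t-s)^{-1/(5+\eta)}$ is not optimal for the activation term (the natural split is at $R=1/(t-s)$), though it still produces a positive Hölder exponent, which is all that is needed.
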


This is the penultimate step towards the description of our system, and it only remains to figure out how these quantities $(n_t)$ and $(p_t)$ evolve. The second consequence of the alternative model is that it allows to control the characteristics of the particles in the components that fall into the gel --- this appeals repeatedly to Theorem \ref{th:JLimproved}, but with different parameters each time. We will find that their generating function solves an odd PDE, that is nonetheless fairly easy to solve, see Section \ref{sec:limit}. The result we end up getting is the following.

\begin{theorem} \label{th:limit}
Assume that Assumptions \ref{as:mu5} and \ref{as:alpha} hold, and denote $\nu = \hat{\mu}$. Then the equation
\begin{equation} \label{eq:Q}
(Q(t) - e^{-t}) G_{\nu}'(Q(t)) = G_{\nu}(Q(t))
\end{equation}
has a unique solution on $[\Tgel,\pinf)$. We denote $Q(t) = 1$ for $t < \Tgel$, and $Q$ is then continuous on $\R^+$.

Moreover, for any $\b < 5 + \eta$, $(n\N,p\N)$ converges in $\D(\R^+,\R) \times \D(\R^+,\ell^1_{\b}(\bN^2))$ to a limit $(n,p)$ where
\[
n_t = G_{\mu}(Q(t))
\]
and the generating function $\psi_t$ of $p_t$ is
\[
\psi_t(x,y) = G_{\mu} \left [ ((Q(t) - e^{-t})x + e^{-t})y \right ].
\]
\end{theorem}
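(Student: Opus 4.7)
The plan is to first establish well-posedness of $Q$, then use tightness (Lemma~\ref{th:tightness}) together with the alternative model of Section~\ref{sec:altmodel} to characterize any subsequential limit of $\psi_t\N(x,y) := \sum_{k,r} p_t\N(k,r) x^k y^r$ by a PDE coupled with an SOC constraint, and finally verify that the claimed parametric form is the unique such solution.

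For the first stage, set $H(q,s) := (q-s) G_\nu'(q) - G_\nu(q)$. Then $\partial_q H = (q-s) G_\nu''(q) > 0$ on $(s, \infty)$ because $G_\nu''(1) = m_3^\mu/m_1^\mu > 0$ by Assumption~\ref{as:mu5}. Since $H(s,s) = -G_\nu(s) < 0$ and $H(1,s) = (1-s) m_1^\nu - 1 > 0$ exactly when $s < 1 - 1/m_1^\nu = e^{-\Tgel}$ (using $m_1^\nu = m_2^\mu/m_1^\mu$), for each $t > \Tgel$ monotonicity yields a unique $Q(t) \in (e^{-t}, 1)$ with $H(Q(t), e^{-t}) = 0$; the implicit function theorem then gives $Q \in C^1((\Tgel,\infty))$. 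Since $H(1, e^{-\Tgel}) = 0$, the extension $Q \equiv 1$ on $[0, \Tgel]$ is continuous at $\Tgel$.

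For the second stage, extract a subsequence along which $(n\N, p\N) \to (n, p)$ with continuous sample paths. The evolution of $\psi_t\N$ splits into (i) local arm activation (rate $r-k$ per particle of type $(k,r)$), whose standard generator calculation contributes $(x-1)(y\partial_y \psi_t - x\partial_x \psi_t)$ in the limit, and (ii) loss of particles through gelation. For (ii), the alternative model and Lemma~\ref{lem:combiB-int} let us treat $G_S(t)$ as an unconditioned CM up to error probabilities summable under Assumption~\ref{as:alpha}; applying the quantitative Theorem~\ref{th:JLimproved} at each of the $O(N/\alpha(N))$ gelation events in a compact interval shows that removed particles are asymptotically size-biased by their number of activated arms. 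In the limit this produces a drift $c(t)\, x\partial_x \psi_t$ with $c(t) \leq 0$ and $c(t) \equiv 0$ on $[0, \Tgel]$. The same quantitative analysis (via Lemma~\ref{lem:combipi-int} combined with Theorem~\ref{th:JLimproved}) forces the SOC constraint $\partial_x^2 \psi_t(1,1) = \partial_x \psi_t(1,1)$ on $[\Tgel, \infty)$, reflecting that the configuration in solution sits exactly at criticality after gelation.

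For the third stage, substitute the ansatz $\psi_t(x,y) = G_\mu\bigl[((q(t) - e^{-t})x + e^{-t})y\bigr]$, $q(0) = 1$, into the PDE. With $u := ((q-e^{-t})x + e^{-t})y$, chain-ruling reduces the PDE to the scalar ODE $q'(t) = c(t)(q(t) - e^{-t})$, while the SOC constraint (via $G_\nu = G_\mu'/m_1^\mu$) collapses to $(q(t) - e^{-t}) G_\nu'(q(t)) = G_\nu(q(t))$, i.e.\ equation~\eqref{eq:Q}; stage~1 then forces $q \equiv Q$. Before gelation, $Q \equiv 1$ and the formula reduces to the pure-DCM generating function $G_\mu[((1-e^{-t})x + e^{-t})y]$, consistent with the coupling $G(t) = \cG(t)$ on $[0, \Tgel)$. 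The formula $n_t = \psi_t(1,1) = G_\mu(Q(t))$ follows, and uniqueness of the limit promotes subsequential convergence to convergence of the whole sequence. The hard part is step (ii): extracting the clean size-biased drift from the cumulative effect of $O(N/\alpha(N))$ microscopic gelation shocks. The summability of the error probabilities in Theorem~\ref{th:JLimproved} under Assumption~\ref{as:alpha} (the very reason $\alpha(N) \gg N^{4/5}$ is imposed) is what makes this tractable; once the drift is correctly identified, the PDE is solved by the parametric ansatz essentially mechanically.
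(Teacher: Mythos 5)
Your proposal shares the paper's overall architecture (tightness via Lemma~\ref{th:tightness}, characterize subsequential limits using the alternative model, solve explicitly), but the closing mechanism at the limit-characterization step is genuinely different, and this is where the gaps lie. The paper (Proposition~\ref{prop:PDE}, built on Lemma~\ref{lem:evolution}) derives an \emph{explicit} mass-loss rate $s(t)$ from the asymptotics of the inter-gelation times $\tau_{k+1}-\tau_k$ (Lemma~\ref{lem:diffgeltimes}), obtaining a closed ODE for $\phi_t$, and then \emph{derives} the solution by observing that $u_t(x)=\phi_t'(x)/(x\phi_t''(x))$ satisfies the same transport equation as $\phi_t$, which forces $R(t)=Q(t)(1-u_T(Q(t)))$ to satisfy $R'=-R$. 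You instead leave the gelation drift $c(t)$ undetermined and close the system by imposing the SOC identity $\partial_x^2\psi_t(1,1)=\partial_x\psi_t(1,1)$ for $t\ge\Tgel$. This buys a cleaner stage~3 (plug in the ansatz and verify), and your stage~1 (implicit function theorem on $H(q,s)=(q-s)G_\nu'(q)-G_\nu(q)$, using $\partial_qH=(q-s)G_\nu''(q)>0$) is actually more explicit than what the paper writes down for existence/uniqueness of $Q$.

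However, two steps are asserted but not established. First, you state that ``the same quantitative analysis\dots forces the SOC constraint,'' but in the paper the criticality of $\pi_t$ is a \emph{consequence} of the explicit formula (Section~\ref{sec:SOC}), derived only after Theorem~\ref{th:limit} is proved; using it as an \emph{input} requires deriving it independently. This can be done — from Lemma~\ref{lem:evolution}'s relation $\psi_k=\phi_k(q_{\mu_k}(x,y))+O(\a(N)/N)$ one computes that $\partial_x\psi_k(1,1)$ and $\partial_x^2\psi_k(1,1)$ both equal $n_k(m_1^k)^2/m_2^k$ to leading order — but that derivation rests on exactly the same quantitative machinery (Proposition~\ref{prop:atgel} propagated over $O(N/\a(N))$ gelation events) that the paper uses to get $s(t)$, so no work is saved and the step must be written out. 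Second, your stage~3 verifies that the parametric ansatz solves the PDE with constraint, but does not show it is the \emph{only} limit-point satisfying them. The PDE involves the a priori unknown $c(t)$; to get uniqueness you need to argue that the constraint determines $c(t)$ pointwise in terms of $\psi_t$ (e.g.\ by differentiating the constraint in $t$ and substituting the PDE, which gives a linear equation in $c(t)$ with nonvanishing coefficient), after which standard transport-equation uniqueness applies. Without this, ``uniqueness of the limit promotes subsequential convergence'' is not justified. The paper sidesteps both issues because its ODE for $\phi_t$ is already closed, so the characteristics method produces the formula directly rather than checking an ansatz.
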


We will explain some consequences of this result in the conclusion, Section \ref{sec:conclusion}, and compute some explicit examples. We will for now be content with explaining how this shows SOC in our model.

\subsubsection{Configuration in solution, self-organized criticality} \label{sec:SOC}

We now have all the keys in hand to be able to conclude to the graph structure in solution. On the one hand, Lemma \ref{lem:combipi-int} describes this structure. On the other hand, Theorem \ref{th:limit} gives us the parameters of the model. We essentially just have to deal with the conditioning in Lemma \ref{lem:combipi-int}, but the alternative model will show that it essentially does not matter. The technical details are left to Section \ref{sec:lastres}.

In any case, Theorem \ref{th:limit} ensures that $\pi\N_t \to \pi_t$, where $\pi_t$ has generating function 
\begin{equation} \label{eq:pit}
G_{\pi_t}(x) = \frac{G_{\mu} \left ( (Q(t) - e^{-t})x + e^{-t} \right )}{G_{\mu}(Q(t))}.
\end{equation}
Thanks to Lemma \ref{lem:combipi-int}, we see that to get a sample of our model at time $t$, we can essentially just pick $n_t N$ particles, give them an i.i.d. number of arms according to $\pi_t$, and then create a uniform pairing of the arms. This is a subcritical configuration model for $t < \Tgel$, and a critical one for $t \geq \Tgel$. Check indeed that, by definition of $\pi_t$,
\[
m_2^{\pi_t} = (1 - e^{-t})^2 m_2^{\mu} < (1 - e^{-t}) m_1^{\mu} = m_1^{\mu},
\]
for $t < \Tgel$, and that, for $t \geq \Tgel$,
\[
m_2^{\pi_t} = (Q(t) - e^{-t})^2 G_{\mu}''(Q(t)) = (Q(t) - e^{-t}) G'_{\mu}(Q(t)) = m_1^{\pi_t},
\]
by definition of $Q(t)$. This already proves that our model exhibits some form of SOC. However, it is somewhat awkward to deal with CM with random parameters. For instance, we may want to say that our configuration is close to a critical CM (with deterministic parameters), but this is not really a well-documented notion.

A nice way around this issue is to use the concept of local convergence that we mentioned in Section \ref{sec:locconv}. We can then give the asymptotic distribution of a typical cluster in solution. This is a direct consequence of the alternative model and of the local convergence result, Proposition \ref{prop:locconv}.
 
\begin{theorem} \label{th:typcluster} 
Assume that Assumptions \ref{as:mu5} and \ref{as:alpha} hold. Consider a vertex uniformly at random at some time $t$, and the component $\Ctyp\N(t)$ rooted at this vertex. Then, in the sense of local weak convergence
\[
\Ctyp\N(t) \to \GW_{\pi_t,\hat{\pi}_t}
\]
as $N \to \pinf$.
\end{theorem}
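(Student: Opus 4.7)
The plan is to combine the structural description of $G_S(t)$ provided by Lemma~\ref{lem:combipi-int} with the convergence of the degree distribution in solution from Theorem~\ref{th:limit}, and then invoke Proposition~\ref{prop:locconv}. The immediate obstacle is that Lemma~\ref{lem:combipi-int} identifies $G_S(t)$ as a configuration model \emph{conditioned} on the absence of any cluster of size $\geq \alpha(N)$, whereas Proposition~\ref{prop:locconv} applies to the unconditioned model; additionally, the relevant degree distribution $\pi_t^{(N)}$ and the number $N_t^{(N)}$ of vertices in solution are themselves random.

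To dispose of the conditioning, I would appeal to the alternative model of Section~\ref{sec:altmodel}: by the coupling established there, with probability tending to~$1$ the configuration $G_S^{(N)}(t)$ coincides, for any fixed $t$, with the configuration produced by the alternative model at time $t$, which is by construction an unconditioned $CM(N_t^{(N)}, \pi_t^{(N)})$ on the particles in solution. Thus, up to an event of vanishing probability, $\Ctyp^{(N)}(t)$ has the law of the component of a uniformly chosen vertex in such an unconditioned configuration model, with no hidden conditioning to worry about.

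To handle the random parameters, I would use Theorem~\ref{th:limit}: $\pi_t^{(N)} \to \pi_t$ in probability in $\ell^1_\beta(\bN)$ for every $\beta < 5+\eta$, with $\pi_t$ having generating function~\eqref{eq:pit}, and $N_t^{(N)}/N \to G_\mu(Q(t)) > 0$. In particular, the first two factorial moments of $\pi_t^{(N)}$ converge in probability to those of $\pi_t$. A standard Skorokhod representation / subsequence argument then reduces the situation to the deterministic setting of Proposition~\ref{prop:locconv}, yielding $\Ctyp^{(N)}(t) \to \GW_{\pi_t, \hat{\pi}_t}$ in the sense of local weak convergence.

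The main conceptual difficulty has in fact already been absorbed into the results quoted above: ensuring that the coupling with the unconditioned configuration model survives through every gelation event up to time $t$ is precisely the purpose of the alternative model of Section~\ref{sec:altmodel}, while the convergence of the empirical parameters is provided by Theorem~\ref{th:limit}. What remains here is essentially a transfer lemma that combines these two ingredients with the classical local convergence of configuration models, together with the verification that a bounded continuous functional of a rooted finite graph is preserved under the approximation $\pi_t^{(N)} \approx \pi_t$, which is routine.
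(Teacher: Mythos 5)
Your proposal is correct and follows essentially the same route as the paper's proof: use Theorem~\ref{th:equiv} to replace $G_S^{(N)}(t)$ by the alternative-model configuration $Z(t)$, which at any time is an unconditioned configuration model with random parameters, then invoke Theorem~\ref{th:limit} for convergence of those parameters in $\ell^1_\beta$ (which gives in particular convergence of the second factorial moment), and conclude via Proposition~\ref{prop:locconv} together with a Skorokhod/subsequence de-conditioning step. The only stylistic difference is that you begin from Lemma~\ref{lem:combipi-int} and frame the alternative model as the tool for removing the conditioning, while the paper goes directly to $Z(t)$; the content is the same.
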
 

Since $\hat{\pi}_t$ is critical or subcritical, these trees are finite, and this could be rephrased by saying that, for any finite rooted tree $\cT$,
\begin{equation} \label{eq:typcluster}
\lim_{N \to \pinf} \P \left ( \Ctyp\N(t) = \cT \right ) = \P \left ( \GW_{\pi_t,\hat{\pi}_t} = \cT \right ).
\end{equation}
This Galton-Watson tree is subcritical for $t < \Tgel$, and critical for $t \geq \Tgel$. This shows even more clearly that our model exhibits SOC on a microscopic scale. This is similar to results in \cite{MN0}. As a last step, and somehow \textit{en passant}, we will be able to easily explain the intriguing Formula \eqref{eq:cinf}, answering at last the question of \cite{NZ}; see Section \ref{sec:limconc}.

\section{Smoluchowski's equation} \label{sec:smolu}

The results concerning Equation \eqref{eq:smolu} are essentially already proven in the literature, though with small differences. This part of the work is different from the remaining of the paper, and is merely here to justify that we indeed obtain a model for Smoluchowski's equation. Therefore, we deal with it here once and for all.

\subsection{Well-posedness}

We recall that (the modified) Smoluchowski's equation (with limited aggregations) is written in \eqref{eq:smolu}, and that $\Tgel$ is defined in \eqref{eq:Tgelsmolu}. The well-posedness result is Theorem \ref{th:wellposed}. We just need to make clear what we mean by a solution to \eqref{eq:smolu}.

\begin{defn} \label{def:solsmolu}
We say that a family of nonnegative continuous functions $(c_t(p), p \in S)$ is a solution to \eqref{eq:smolu} with initial conditions $c_0 \in [0, \pinf)^S$ if, for every $t \geq 0$,
\begin{enumerate}[label=(\roman*)]
\item $0 < \inf_{s \in [0,t]} \la c_s, a \ra \leq \sup_{s \in [0,t]} \la c_s, a \ra < \pinf$;
\item $\int_0^t \la c_s, a \ra^2 \ds < \pinf$;
\item for every $p \in S$,
\[
c_t(p) - c_t(0) = \frac12 \int_0^t \frac{1}{\la c_s, a \ra} \left ( \sum_{p' \lesssim p} p' \cdot (p \bsl p') c_s(p') c_s(p \bsl p') - \sum_{p' \in S}  p \cdot p' c_s(p) c_s(p') \right ) \ds.
\]
\end{enumerate} 
\end{defn}

Note that conditions (i) and (ii) ensure that the integral in (iii) is well-defined. Up to the unsurprising condition (i), this is the definition of a solution given in \cite{NZ}. Notice also that the former has an extra condition in the definition of a solution, but it is unnecessary, as was explained for the usual Smoluchowski equation in \cite{MN0}.

We will not prove Theorem \ref{th:wellposed}. Indeed, due to the similarity between equations \eqref{eq:smolu0} and \eqref{eq:smolu}, the proof of the existence and uniqueness of the solution is exactly the same as in \cite{NZ}, as are Points 1 and 2 of the result. Points 3 and 4 show that both equations are just a time-change of one another, and can be checked in a straightforward way. Finally, Point 5 is a direct consequence of this time-change. The analysis of \cite{NZ} also presents a representation formula for the solution to \eqref{eq:smolu0}. The same can be done for \ref{eq:smolu}. However, this gives rise to quite cumbersome formulae, which are not of interest to us.

It is worth comparing the time $\Tgel^0$ defined in \eqref{eq:Tgel0}, to the time $\Tgel$ defined in \eqref{eq:Tgel}. We already mentioned that $\Tgel^0 < \pinf$ if and only if $\Tgel < \pinf$, which is expected as $\Tgel^0$ is the gelation time for \eqref{eq:smolu0}, whereas $\Tgel$ is the gelation time for the time-changed \eqref{eq:smolu}. Moreover, one can explicitly compute that they are related by the time-change described in Theorem \ref{th:wellposed}. In our case of interest, i.e. monodisperse initial conditions $c_0(a,m) = \mu(a) \unn{m=1}$, we also have that $m_1$ and $m_2$ are just the two first factorial moments $m_1^{\mu}$ and $m_2^{\mu}$ of $\mu$, what makes sense in terms of configuration model, as we explained in Section \ref{sec:firstgel}.

\subsection{Convergence to Smoluchowski's equation}

Recall that $c\N_t(a,m)$ is the concentration of clusters with $a$ free arms and mass $m$ in our model, that is, there are $N c\N_t(a,m)$ such clusters at time $t$. Similarly to the result of Fournier and Lauren\c{c}ot \cite{FournierMLP} mentioned above, the only assumption needed for convergence is \eqref{eq:alpha0}. The convergence result is Theorem \ref{th:convconc}.

This result justifies that our model is indeed a discrete model for Smoluchwoski's equation with limited aggregations. Notice also that our model is defined in such a way that the $c_0$ are monodisperse, but it would be easy to modify it to get any initial conditions, and a similar result would hold. However, different initial conditions \emph{do not} allow us to carry out the analysis that will lead to the other results, and which are the main interest of this paper.

We shall now quickly prove Theorem \ref{th:convconc}. The ideas are from \cite{FournierMLP}, already used in \cite{MN0}. There are however two main differences here.
\begin{itemize}
\item There are two parameters $a$ and $m$, and the coagulations depend on $a$, whereas the threshold is a function of $m$. This only creates minor issues.
\item Since one has to wait for two arms to be activated to see a coagulation, the process $(c\N)$ is no more Markovian. The time between two jumps is indeed the sum of two independent exponential times and therefore not exponential. The process is then called \emph{semi-Markov}. Once again, this nonetheless allows to write martingales, and only minor technical issues occur. For the necessary background on semi-Markov processes, see \cite{SemiMarkovBook}.
\end{itemize}

\begin{proof}[Proof of Theorem \ref{th:convconc}]
Let
\[
A\N_t = \sum_{m = 1}^{\a(N) - 1} \sum_{a=1}^{\pinf} a c\N_t(a,m)
\]
so $N A\N_t$ is the total number of arms in solution. We always assume in the following computations that there are at least two arms in solution (in particular $A\N_t \geq 2/N$), else nothing happens. These concentrations evolve as follows. First, one has to wait an exponential time with parameter $A\N_t$ to see an arm activated. Then, another arm is activated after an independent exponential time with parameter $A\N_t-1$, and a coagulation occurs. A coagulation occurs between two different $p$- and $p'$-clusters in solution, with probability
\[
\frac{2 p \cdot p' c\N_t(p) c\N_t(p')}{A\N_t(A\N_t - 1/N)}
\]
if $p \neq p'$, and with probability
\[
\frac{p \cdot p \: c\N_t(p)^2}{A\N_t(A\N_t - 1/N)} - \frac1N \frac{p \cdot p \: c\N_t(p)}{A\N_t (A\N_t - 1/N)}
\]
for $p = p'$. With these probabilities, the new concentrations are $c\N_t + \frac1N \Dl_{p,p'}$, where
\[
\begin{cases}
\Dl_{p,p'}(p) = \Dl_{p,p'}(p') = - 1 & \text{if $p \neq p'$,} \\
\Dl_{p,p'}(p) = - 2 & \text{if $p = p'$,} \\
\Dl_{p,p'}(p \circ p') = + 1.\\
\end{cases}
\]
The only thing that we are missing is when the two arms are on the same $p$-cluster, what happens with probability
\[
\frac1N \frac{p \cdot (p-1) c\N_t(p)}{A\N_t (A\N_t - 1)},
\]
where $p-k := p - (k,0)$ for $k \in \bN$. In this case, the new concentrations are $c\N_t + \frac1N \Dl_p$, with
\[
\Dl_p(p) = - 1, \quad \Dl_{p}(p-2) = +1.
\]

Let $E = \ell^1(S)$ and $I_N = \bN \times \{1, \dots, \a(N) - 1 \}$. At any time, the system in then in a state $\eta \in E$. For $\eta \in E$, we define
\[
A\N_{\eta} = \sum_{m=1}^{\a(N) - 1} \sum_{a \geq 1} a \eta(a,m).
\]
For any bounded measurable mapping $f : \E \to \R$, and taking into account symmetries, let us define the generator of $(c_t\N)$ as
\[
\begin{split}
L\N f(\eta) & = \left ( \frac{1}{N  A\N_{\eta}} + \frac{1}{N A\N_{\eta} - 1} \right )^{-1} \times \\
& \quad \left [ \sum_{p,p' \in I_N^2} \frac{p \cdot p' \eta(p) \eta(p')}{A\N_{\eta} (A\N_{\eta} - 1/N)} \left ( f \left ( \eta + \frac1N \Dl_{p,p'} \right ) - f(\eta) \right ) \right. \\
& \quad - \frac1N \sum_{p \in I_N} \frac{p \cdot p \: \eta(p)}{A\N_{\eta} (A\N_{\eta} - 1/N)} \left ( f \left ( \eta + \frac1N \Dl_{p,p} \right ) - f(\eta) \right ) \\
& \quad + \left. \frac1N  \sum_{p \in I_N} \frac{p \cdot (p-1) \eta(p)}{A\N_{\eta} (A\N_{\eta} - 1/N)} \left ( f \left ( \eta + \frac1N \Dl_p \right ) - f(\eta) \right ) \right ] \\
& = \frac{1}{A\N_{\eta}  -1/(2N)} \left [ \frac12 \sum_{(p,p') \in I_N^2} p \cdot p' \eta(p) \eta(p') N \left ( f \left ( \eta + \frac1N \Dl_{p,p'} \right ) - f(\eta) \right ) \right. \\
& \quad - \frac12 \sum_{p \in I_n} p \cdot p \: \eta(p) \left ( f \left ( \eta + \frac1N \Dl_{p,p} \right ) - f(\eta) \right ) \\
& \quad \left. + \frac12 p \cdot (p - 1) \left ( f \left ( \eta + \frac1N \Dl_p \right ) - f(\eta) \right ) \right ].
\end{split}
\]
As we mentioned, $(c\N_t)$ is not a Markov process since the time between two jumps is not exponential, but it is however still true \cite{SemiMarkovBook} that, for every bounded $f : E \to \R$, the process
\[
\MNf_t = f(c\N_t) - f(c\N_0) - \int_0^t L\N f (c\N_s) \ds
\]
is a martingale, with quadratic variation
\[
\la \MNf_t \ra = \int_0^t \left ( L\N f^2(c\N_s) - 2 f(c\N_s) L\N f (c\N_s) \right ) \ds.
\]

Similarly to \cite{MN0,FournierMLP}, we can see, by taking
\[
f(\eta) = \sum_{a,m \geq 1} (a \wedge b) \eta(a,m)
\]
for some $b > 0$, that
\begin{equation} \label{eq:boundconv}
\E \left ( \int_0^t \frac{1}{\An_s - 1/(2N)} \left ( \sum_{a \geq b} \sum_{m=1}^{\a(N) - 1} a c\N_s(a,m) \right )^2 \ds \right ) \leq K \left ( \frac1b + \frac{\a(N)}{N} \right )
\end{equation}
for some constant $K$ depending only on the initial conditions. This is the fundamental inequality used in the proof, and the rest is just a perusal of the arguments of \cite{MN0,FournierMLP}: first, tightness is easy to obtain, since there is an order $\dl N$ of jumps of size of order $1/N$ on intervals of size $\dl$; then, limits are shown to satisfy \eqref{eq:smolu} by using \eqref{eq:boundconv} and the martingale $\MNf_t$ for linear $f$.
\end{proof}

\section{Preliminary results} \label{sec:prelim}

This section presents some easy preliminary results. The first is that, for any time, there remains a positive fraction of the particles in solution w.h.p., what allows to use asymptotic results. The second series of results allow to strengthen convergence in $\ell^1$ to convergence in $\ell^1_{\b}$, when we have an assumption such as Assumption \ref{as:mu4} or \ref{as:mu5}.

\subsection{Positive concentration}

Denote $\cM_1$ the set of probability measures on $\bN$, and $\cM_1\N$ the subset of $\mu \in \cM_1$ such that $N \mu(k) \in \bN$ for all $k \in \bN$. We consider $\tK\N_{c,m,M}$ the set of all couples
\[
(n,\mu) \in \bigcup_{n \geq 1} \{ n \} \times \cM_1\n
\]
such that
\begin{equation} \label{eq:tKN}
c N \leq n \leq N, \quad m_3^{\mu} \geq m, \quad \sum_{r \geq 0} r^{5+\eta} \mu(r) \leq M,
\end{equation}
and $\K\N_{c,m,M}$ the set of $(n,\mu) \in \tK\N_{c,m,M}$ such that additionally
\begin{equation} \label{eq:KN}
\frac{m^{\mu}_2}{m^{\mu}_1} \geq 1 + m.
\end{equation}
We can a priori ensure that the concentration and empirical measure of degrees in solution remains in $\tK\N_{c,m,M}$, at least on bounded intervals. However, we will also need to show that it remains in $\K\N_{c,m,M}$, and this will require to precisely study how the degrees in solution evolve. Typically, assume that half of the particles have degree 3 at time 0, and half have degree 2. We could imagine that, we are first going to remove the particles of degree 3. When this is done, the empirical measure of degrees in solution will definitely not verify \eqref{eq:KN}. It turns out that this is not how the system evolves, but this will require more work. However, as we mentioned, we have the following result, where we recall that $N_t$ is the number of particles in solution at time $t$, and $\mu_t$ their empirical distribution of degrees.

\begin{lemma} \label{lem:alwaystKN}
Assume that Assumption \ref{as:mu5} holds. Then, for any $T > 0$, there exists $c, m, M \in (0,\pinf)$ such that,
\[
\P \left ( \forall t \in [0,T] \; (N_t, \mu\N_t) \in \tK\N_{c,m,M} \right ) \to 1.
\]
\end{lemma}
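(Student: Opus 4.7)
The approach rests on Theorem \ref{th:convconc}, which provides convergence in distribution of $c\N$ to the solution $c$ of \eqref{eq:smolu} in $\D(\R^+, \ell^1(S))$. Membership $(N_t, \mu\N_t) \in \tK\N_{c,m,M}$ amounts to three uniform-in-$t$ bounds on $[0,T]$, which I would address in turn: (i) $N_t \geq cN$; (ii) $\sum_r r^{5+\eta} \mu\N_t(r) \leq M$; (iii) $m_3^{\mu\N_t} \geq m$.

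For (i), observe that $n\N_t = \la c\N_t, m \ra$ converges in probability to $n_t = \la c_t, m \ra$ uniformly on $[0,T]$, combining the $\ell^1$-convergence of $c\N$ with the uniform bound $n\N_t \leq 1$. By Theorem \ref{th:wellposed}, $n_t$ is continuous; moreover it remains strictly positive on $[0,T]$ since the mass can only leak out of solution at a finite rate along \eqref{eq:smolu}. Thus $\inf_{[0,T]} n_t > 0$, and any $c$ below this infimum works. Bound (ii) requires no convergence at all: the sum $\sum_r r^{5+\eta} \cdot \#\{\text{particles in solution of degree } r\}$ is non-increasing in $t$ (particles never re-enter solution), and bounded initially by $MN$ thanks to Assumption \ref{as:mu5}; dividing by $N_t \geq cN$ yields (ii) with a constant of the form $M/c$. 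As a byproduct, this gives uniform integrability of $r^3$ under $\mu\N_t$, which upgrades the $\ell^1$-convergence of $p\N$ to convergence of the third factorial moment: $m_3^{\mu\N_t} \to m_3^{\mu_t}$ in probability, uniformly on $[0,T]$.

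The main obstacle is (iii). By the convergence of moments above, it suffices to establish $\inf_{[0,T]} m_3^{\mu_t} > 0$, where $\mu_t$ denotes the limiting degree distribution in solution. Under Assumption \ref{as:mu5}, there is some $r_0 \geq 3$ with $\mu(r_0) > 0$, and I would show that the limiting concentration $\sum_k p_t(k, r_0)$ of degree-$r_0$ particles in solution stays positive throughout $[0,T]$. The cleanest derivation uses the explicit formula $\sum_k p_t(k, r) = Q(t)^r \mu(r)$ that drops out of Theorem \ref{th:limit}: this is positive whenever $\mu(r) > 0$, because $Q(t) > 0$ for all finite $t$. Since that formula creates a logical dependence on a much later result, the route I would actually take is to extract only the qualitative positivity directly from the PDE characterizing the limit; specifically, the total mass flux into the gel on $[0,T]$ equals $1 - n_T$, which is strictly less than $1$, thereby bounding the aggregate loss of particles of any fixed degree and preventing $\sum_k p_t(k, r_0)$ from vanishing in finite time. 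Continuity of $m_3^{\mu_t}$ then yields the required uniform positivity on $[0,T]$.
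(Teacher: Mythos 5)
Your proposal takes a much heavier route than the paper needs, and it has a genuine circularity problem at step (iii). You invoke the formula $\sum_k p_t(k,r) = Q(t)^r \mu(r)$ from Theorem~\ref{th:limit}, recognize the dependence issue, and then propose to fall back on ``the PDE characterizing the limit'' --- but that PDE is Proposition~\ref{prop:PDE}, which is proved \emph{via} the alternative-model construction in Section~\ref{sec:altmodel}, whose key Lemma~\ref{lem:equivasymp} opens by invoking Lemma~\ref{lem:alwaystKN} itself. Nothing available by Section~\ref{sec:prelim} tells you what the limiting $\mu_t$ is or that its third factorial moment is bounded away from zero. Two further soft spots: Theorem~\ref{th:convconc} speaks of cluster concentrations $c_t(a,m)$ (free arms and mass), not of the degree distribution $\mu_t$ of particles in solution, and passing from one to the other is not immediate; and the claim that $n_t = \la c_t, m\ra$ stays strictly positive on $[0,T]$ is asserted (``leaks at finite rate'') without a supporting estimate --- Theorem~\ref{th:wellposed} only gives continuity and monotonicity, not positivity. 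Also, $\ell^1(S)$-convergence alone does not yield convergence of $\la c_t\N, m\ra$; that requires a uniform integrability bound which you do not supply.

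The paper's argument avoids all of this by working at a purely microscopic level and by proving (i) and (iii) from a single observation: under Assumption~\ref{as:mu5} there is a degree $k \geq 3$ with $\mu\N(k) \geq \delta > 0$ for large $N$, and by Chebyshev for the independent indicator variables, a positive fraction $cN$ of these degree-$k$ particles have \emph{no} arm activated by time $T$. Such particles are singletons, hence trivially remain in solution; this gives $N_t \geq cN$, and at the same time $\mu_t(k) \geq c$, which forces $m_3^{\mu_t} \geq k(k-1)(k-2)c \geq 6c$. Your treatment of (ii), by monotonicity of $\sum_r r^{5+\eta} \#\{r\text{-particles in solution}\}$, matches the paper exactly, but it is the only piece of your proof that is both correct and non-circular. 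I would drop the appeal to Smoluchowski's equation altogether and instead look for a \emph{sufficient subset of particles} that survives in solution deterministically from its own microscopic dynamics.
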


\begin{proof}
Since $\mu\N \to \mu$ with $m^{\mu}_3 > 0$, then, by moment assumption, for some $\dl > 0$ and some $k \geq 3$, $\mu\N(k) \geq \dl$ for all $N$ large enough. Denote $X_i\N$ the indicator function that no link on particle $i$ has been activated by time $T$, and $I\N$ the set of particles with $k$ arms. If $i \in I\N$, then
\[
\P(X\N_i = 1) = (1-e^{-t})^k =: p.
\]
Since the $X\N_i$ are independent, by Chebyshev's inequality,
\[
\P \left ( \left | \frac{1}{\# I\N} \sum_{i \in I\N} X_i - p \right | > p/2 \right ) \leq \frac{4}{(\#I \N)^2 p^2} \sum_{i \in I\N} \Var(X\N_i) \leq \frac{4}{\# I\N p}.
\]
Since $\# I\N = N \mu\N(k) \geq \dl N$, then this last term goes to $0$ as $N \to \pinf$. Hence, w.h.p., at least $p \# I\N / 2 \geq p \dl N / 2 := c N$ particles with $k$ arms have no activated arms, and thus are still in solution. Hence, for all $t \in [0,T]$, $n_t \geq c$ and $\mu_t(k) \geq c N / N_t \geq c$. Therefore
\[
\sum_{r \geq 0} r^{5+\eta} \mu_t(r) \leq \frac{N}{N_t} \sum_{r \geq 0} r^{5+\eta} \mu(r) \leq \frac{M}{c},
\]
and
\[
m_3^{\mu_t} \geq k(k-1)(k-2) \mu_t(k) \geq 6 c,
\]
which shows the result.
\end{proof}

\subsection{Stronger convergence}

The next result ensures that all the weak convergence of measures that we will consider can be extended to a far stronger convergence. Recall the definition of $\ell^1_{\b}$ and $\| \cdot \|_{\b}$ from \eqref{eq:normbeta}. We do the proofs in the case $d = 1$ for simplicity, but the extension to $d \geq 2$ is straightforward.

\begin{lemma} \label{lem:strongconvseq}
Assume that $(u\N)$ is a sequence in $\ell^1$ such that $u\N \to u$ in $\ell^1$ and
\[
\sup_{N \geq 1} \| u \N \|_{\g} < \pinf
\]
fr some $\g > 1$. Then, for any $\b \in [1,\g)$, $u\N \to u$ in $\ell^1_{\b}$.
\end{lemma}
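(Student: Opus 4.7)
The plan is a standard split-into-head-and-tail argument that leverages the uniform $\gamma$-moment bound as a form of uniform integrability. For any fixed threshold $K \in \bN$, write
\[
\| u\N - u \|_{\b} = \sum_{k \leq K} (1 + k^{\b}) |u\N(k) - u(k)| + \sum_{k > K} (1 + k^{\b}) |u\N(k) - u(k)|.
\]
The head is bounded by $(1 + K^{\b}) \| u\N - u \|_{\ell^1}$, which tends to $0$ as $N \to \infty$ for each fixed $K$ by the $\ell^1$-convergence hypothesis.

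For the tail, I would estimate separately $u\N$ and $u$. The key observation is that, since $\b < \g$,
\[
\sup_{k > K} \frac{1 + k^{\b}}{1 + k^{\g}} \underset{K \to \infty}{\longrightarrow} 0,
\]
so for any sequence $v \in \ell^1_{\g}$,
\[
\sum_{k > K} (1 + k^{\b}) |v(k)| \leq \left( \sup_{k > K} \frac{1 + k^{\b}}{1 + k^{\g}} \right) \| v \|_{\g}.
\]
Applying this with $v = u\N$ and using the uniform bound $C := \sup_N \| u\N \|_{\g} < \infty$ makes the tail arbitrarily small, uniformly in $N$, by taking $K$ large. To apply it also to $u$, I note that $\ell^1$-convergence implies pointwise convergence $u\N(k) \to u(k)$, whence Fatou's lemma on the counting measure gives $\| u \|_{\g} \leq \liminf_N \| u\N \|_{\g} \leq C$, so the same tail bound applies to $u$.

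Combining: given $\eps > 0$, first choose $K$ large enough that the two tail contributions from $u\N$ and $u$ are each at most $\eps/3$ for all $N$, then choose $N_0$ such that the head is at most $\eps/3$ for $N \geq N_0$. This yields $\| u\N - u \|_{\b} \leq \eps$ for $N \geq N_0$, proving convergence in $\ell^1_{\b}$.

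There is essentially no serious obstacle here; the argument is a textbook uniform integrability/de la Vallée-Poussin-type truncation. The only subtle point is applying Fatou to transfer the moment bound from the sequence to the limit, but this is immediate from pointwise convergence on $\bN$. The extension to $d \geq 2$ announced by the authors follows identically, replacing $1 + k^{\b}$ by $1 + k_1^{\b} + \cdots + k_d^{\b}$ throughout and noting that the ratio $(1 + k_1^{\b} + \cdots + k_d^{\b})/(1 + k_1^{\g} + \cdots + k_d^{\g})$ still vanishes uniformly when $\max_j k_j \to \infty$.
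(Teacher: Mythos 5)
Your argument is correct, but it is a genuinely different route from the one in the paper. The paper proves this lemma in one line by H\"older's inequality: choosing $p>1$ with $\b p < \g$ and $1/p+1/q=1$, it writes
\[
\sum_{r} r^{\b} |u\N(r)-u(r)| \leq \Big(\sum_{r} r^{\b p}|u\N(r)-u(r)|\Big)^{1/p}\Big(\sum_{r}|u\N(r)-u(r)|\Big)^{1/q},
\]
where the first factor is uniformly bounded thanks to the $\g$-moment hypothesis (and, implicitly, the same Fatou argument you invoke to get $\|u\|_{\g}<\infty$), while the second factor tends to $0$. In contrast, you split at a finite threshold $K$ and use that $(1+k^{\b})/(1+k^{\g}) \to 0$ as $k\to\infty$ together with the uniform $\g$-moment bound as a de la Vall\'ee--Poussin/uniform-integrability criterion to make the tail small uniformly in $N$; the head is then handled by plain $\ell^1$-convergence. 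The H\"older approach is more compact and avoids $\eps$--$K$--$N_0$ bookkeeping, while yours makes the underlying uniform-integrability mechanism transparent and, as you note, transfers verbatim to the multidimensional weights $1+k_1^{\b}+\cdots+k_d^{\b}$ used later in the paper (the H\"older proof adapts too, but less obviously). Both are complete; your handling of $\|u\|_{\g}\le C$ via pointwise convergence plus Fatou on counting measure is exactly the step the paper glosses over.
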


\begin{proof}
Let $p \in (1,\pinf)$ such that $\b p < \g$, and let $q \in (1,\pinf)$ such that $p^{-1} + q^{-1} = 1$. Then H\"older's inequality implies that
\[
\sum_{r \geq 0} r^{\b} \left | u\N(r) - u(r) \right | \leq \left ( \sum_{r \geq 0} r^{\b p} \left | u\N(r) - u(r) \right |  \right )^{1/p} \left ( \sum_{r \geq 0} \left | u\N(r) - u(r) \right |  \right )^{1/q} 
\]
and this tends to 0 by the assumptions.
\end{proof}

This can be extended to convergence of $\ell^1$-valued functions as follows.

\begin{lemma} \label{lem:strongconvfunc}
Assume that $(u\N)$ is a sequence in $\D(\R^+,\ell^1)$ such that $u\N \to u$ for some $u \in \D(\R^+,\ell^1)$, and that for some $\g > 1$ and for all $T > 0$,
\[
\sup_{N \geq 1} \sup_{t \in [0,T]} \|u\N_t \|_{\g} < \pinf.
\]
Then, for any $\b \in [1,\g)$, $u\N \to u$ in $\D(\R^+,\ell^1_{\b})$.
\end{lemma}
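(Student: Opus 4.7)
The plan is to reduce the functional statement to the pointwise estimate of Lemma~\ref{lem:strongconvseq}, by noticing that the time-changes witnessing Skorokhod convergence in $\D(\R^+,\ell^1)$ can be reused to witness Skorokhod convergence in $\D(\R^+,\ell^1_{\b})$.

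First I would fix $T>0$ and recall the characterization of Skorokhod convergence on a compact time interval: $u\N\to u$ in $\D(\R^+,\ell^1)$ implies that for every $T$ which is a continuity point of $u$, there exist continuous, strictly increasing time changes $\lambda_N:[0,T]\to[0,\lambda_N(T)]$ with $\lambda_N(0)=0$ such that
\[
\sup_{t\in[0,T]}|\lambda_N(t)-t|\longrightarrow 0,\qquad \sup_{t\in[0,T]}\|u\N_{\lambda_N(t)}-u_t\|_{\ell^1}\longrightarrow 0.
\]
Using lower semicontinuity of $\|\cdot\|_{\g}$ under coordinatewise convergence and Fatou, the uniform $\g$-moment bound on $(u\N)$ transfers to $u$: for every $T>0$, $\sup_{t\in[0,T]}\|u_t\|_{\g}<\pinf$. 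In particular, there exists a constant $C_T$ such that, for all $N$ and all $t\in[0,T]$,
\[
\|u\N_{\lambda_N(t)}-u_t\|_{\g}\leq \|u\N_{\lambda_N(t)}\|_{\g}+\|u_t\|_{\g}\leq C_T.
\]

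Next I would copy the H\"older argument of Lemma~\ref{lem:strongconvseq}, but uniformly in $t$. Fix $\b\in[1,\g)$, choose $p\in(1,\pinf)$ with $\b p<\g$, and let $q$ be the conjugate exponent. For each $t\in[0,T]$,
\[
\sum_{r\geq 0}r^{\b}\bigl|u\N_{\lambda_N(t)}(r)-u_t(r)\bigr|
\leq \Bigl(\sum_{r\geq 0}r^{\b p}\bigl|u\N_{\lambda_N(t)}(r)-u_t(r)\bigr|\Bigr)^{1/p}\Bigl(\sum_{r\geq 0}\bigl|u\N_{\lambda_N(t)}(r)-u_t(r)\bigr|\Bigr)^{1/q}.
\]
The first factor is bounded by $(2C_T)^{1/p}$ uniformly in $N$ and $t\in[0,T]$, because $\b p<\g$. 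The second factor is bounded by $\sup_{s\in[0,T]}\|u\N_{\lambda_N(s)}-u_s\|_{\ell^1}^{1/q}$, which tends to $0$ as $N\to\pinf$. Adding the $r=0$ contribution and the $\|\cdot\|_{\ell^1}$ piece (which is already controlled), we conclude that
\[
\sup_{t\in[0,T]}\|u\N_{\lambda_N(t)}-u_t\|_{\b}\longrightarrow 0.
\]

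Finally, since the same time changes $\lambda_N$ also satisfy $\sup_{t\in[0,T]}|\lambda_N(t)-t|\to 0$, the criterion for Skorokhod convergence gives $u\N\to u$ in $\D([0,T],\ell^1_{\b})$ for every $T$ that is a continuity point of $u$; as such $T$ are dense in $\R^+$, this yields convergence in $\D(\R^+,\ell^1_{\b})$. The only delicate step is making sure that the $\ell^1$-Skorokhod time changes do transport uniform integrability bounds --- the uniform moment assumption on $(u\N)$ is exactly what makes this automatic, since the bound $\|u\N_{\lambda_N(t)}\|_{\g}\leq C_T$ holds for \emph{every} $t\in[0,T]$ regardless of the specific $\lambda_N$ used.
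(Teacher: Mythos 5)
Your proof is correct and takes essentially the same route as the paper: the paper's proof simply invokes the time-change characterization of Skorokhod convergence and then says ``conclude similarly as in Lemma~\ref{lem:strongconvseq},'' which is precisely the H\"older argument you carry out uniformly in $t$. Your Fatou step, which transfers the $\g$-moment bound to the limit $u$, is a detail the paper leaves implicit but is indeed needed for the uniform bound on the $\|\cdot\|_{\g}$ factor.
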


\begin{proof}
It suffices to recall the definition of the Skorokhod topology \cite{BillingsleyCPM,Kallenberg}: for some sequence $(\l_N)$ of time-changes,
\[
\sup_{s \leq T} \left | \l_N(s) - s \right | + \sup_{s \leq T} \left \| u\N_{\l_N(s)} - u_s \right \| \to 0
\]
for all $T \geq 0$. It is then easy to conclude similarly as in Lemma \ref{lem:strongconvseq}.
\end{proof}

Finally, this readily entails that, for processes verifying this $\g$-th moment assumption, we only need to check usual convergence (or tightness) to get a stronger convergence (or tightness).

\begin{lemma} \label{lem:strongconvproc}
Assume that $(u\N)$ is a process in $\D(\R^+,\ell^1)$ and that, for all $T \geq 0$, there is a $M_T \in (0,\pinf)$ such that
\[
\P \left ( \sup_{t \in [0,T]} \|u\N_t \|_{\g} \leq M_T \right ) \to 1.
\]
Then the following hold.
\begin{enumerate}
\item If $u\N \to u$ in distribution in $\D(\R^+,\ell^1)$, then $u\N \to u$ in distribution in $\D(\R^+,\ell^1_{\b})$ for any $\b \in [0,\g)$.
\item If $(u\N)$ is tight in $\D(\R^+,\ell^1)$, then it is tight in $\D(\R^+,\ell^1_{\b})$ for any $\b \in [0,\g)$.
\end{enumerate}
\end{lemma}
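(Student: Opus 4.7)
The plan is to reduce the probabilistic statement to the deterministic Lemma~\ref{lem:strongconvfunc} by combining Skorokhod's representation, a Borel--Cantelli extraction, and the standard subsequence principle.

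For (1), since $\D(\R^+, \ell^1)$ is a Polish space, Skorokhod's representation theorem furnishes an auxiliary probability space carrying copies $\tilde u\N$ and $\tilde u$ of the original processes with $\tilde u\N \to \tilde u$ almost surely in $\D(\R^+, \ell^1)$. The event $\tilde A_N^T = \{\sup_{t \leq T}\|\tilde u\N_t\|_\gamma \leq M_T\}$ still has probability tending to $1$, so a diagonal extraction yields a subsequence $N_k$ with $\sum_k \P((\tilde A_{N_k}^T)^c) < \infty$ for every integer $T$. Borel--Cantelli then isolates a full-measure event on which simultaneously $\tilde u^{(N_k)} \to \tilde u$ in $\D(\R^+, \ell^1)$ and, for every $T$, $\sup_{t \leq T}\|\tilde u^{(N_k)}_t\|_\gamma \leq M_T$ for all $k$ large enough. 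Because $\|\cdot\|_\gamma$ is a sum of nonnegative coordinates, it is lower semicontinuous under coordinate-wise limits, so Fatou transfers the uniform bound to $\tilde u$ itself along the Skorokhod time-changes. On this event Lemma~\ref{lem:strongconvfunc} applies pathwise and gives $\tilde u^{(N_k)} \to \tilde u$ in $\D(\R^+, \ell^1_\beta)$ for every $\beta \in [0, \gamma)$, hence convergence in distribution along this subsequence. To return to the full sequence I apply the same argument to an arbitrary subsequence of $(u\N)$: it still converges in distribution in $\D(\R^+, \ell^1)$ to $u$ and still satisfies the moment hypothesis, so it admits a further subsequence converging to $u$ in distribution in $\D(\R^+, \ell^1_\beta)$. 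The subsequence principle closes (1).

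Part (2) is then a short application of Prokhorov's theorem on the Polish space $\D(\R^+, \ell^1)$: tightness is equivalent to the statement that every subsequence has a further weakly convergent subsequence. Given any subsequence of $(u\N)$, extract a weakly convergent sub-subsequence in $\D(\R^+, \ell^1)$; since the moment bound persists along any subsequence, part (1) upgrades this to convergence in distribution in $\D(\R^+, \ell^1_\beta)$. Every subsequence of $(u\N)$ therefore admits a weakly convergent sub-subsequence in $\D(\R^+, \ell^1_\beta)$, and invoking Prokhorov in the reverse direction delivers tightness there.

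The genuine difficulty in the plan is the mismatch between the \emph{in probability} moment bound of the hypothesis and the \emph{almost sure} convergence furnished by Skorokhod's theorem; it is precisely this mismatch that forces the Borel--Cantelli extraction along a subsequence and, in turn, the subsequence-principle dance to return to statements about the full sequence. Once one has identified a pathwise event of full measure on which both Skorokhod convergence and the uniform $\gamma$-norm bound hold, the deterministic Lemma~\ref{lem:strongconvfunc} carries out the rest of the argument without further work.
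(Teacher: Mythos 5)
Your proof is correct and follows essentially the same route as the paper: Skorokhod representation to reduce part (1) to the deterministic Lemma~\ref{lem:strongconvfunc}, and the subsequence characterization of tightness (Prokhorov) for part (2). The paper's proof is terser and implicitly elides the mismatch between the in-probability moment bound and the a.s. Skorokhod convergence; your Borel--Cantelli extraction along a subsequence, combined with the subsequence principle, supplies the detail needed to bridge that gap.
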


\begin{proof}
The first part follows directly from Lemma \ref{lem:strongconvfunc}, for instance by using Skorokhod's embedding to assume that $u\N \to u$ a.s. The second part means that every subsequence of $(u\N)$ has a subsequence converging in distribution. But then, such a  converging sub-subsequence also converges in distribution in $\D(\R^+,\ell^1_{\b})$ by the first part of the result, which is exactly what we had to check.
\end{proof}

\subsection{Combinatorial results} \label{sec:combin}

Recall that $S(t)$ is the set of particles in solution at time $t$, and that $B(t)$ is the total number of activated arms at this time. A configuration model $CM(S,B)$ is defined by picking a uniform sequence of $B$ arms of the particles in $S$ and joining the first and the second, the third and the fourth, and so on. The conditioned graph $CM'(S,B)$ is this graph conditioned on having no large component. Precisely, it is given by the pairing corresponding to a sequence of $B$ arms in $S$, chosen uniformly among all the sequences such that the corresponding pairing creates no large component. The main combinatorial result is the following.

\begin{lemma} \label{lem:combiB1}
For any $t \geq 0$, conditionally on $S(t)$ and $B(t)$, the configuration in solution $G_S(t)$ has the distribution of a $CM'(S(t),B(t))$.
\end{lemma}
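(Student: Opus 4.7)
The plan is to exploit independence of the exponential clocks together with a structural observation decoupling the $S$- and gel-arm dynamics. I would first represent the process via a single uniform ordering: since each arm carries an independent $\cE(1)$ clock, each arm is activated by time $t$ independently with probability $1-e^{-t}$, and given the set $\mathcal{A}$ of activated arms, their activation order $\rho$ is a uniformly random permutation of $\mathcal{A}$. Writing $\rho^S$ and $\rho^{S^c}$ for the sub-sequences of $\rho$ consisting of arms on particles of $S(t)$ and of the gel respectively, the symmetry of independent clocks ensures that $\rho^S$, $\rho^{S^c}$, and the interleaving pattern are mutually independent (given their sizes), with $\rho^S$ uniformly distributed over orderings of the activated $S$-arms.

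The crucial structural observation is that an arm on a particle of $S(t)$ can only be bonded to another arm on a particle of $S(t)$: a bond places its two endpoints in the same cluster forever, and at time $t$ any given cluster is wholly in solution or wholly in the gel. Hence $G_S(t)$ is exactly the graph whose edges are the consecutive pairs $\{\rho^S(2i-1),\rho^S(2i)\}$, while the gel-side dynamics (and in particular the event $\{S(t)=S\}$, through which clusters precipitate) is a function of $\rho^{S^c}$ alone. More precisely, $\{S(t)=S\}$ is the intersection of ``$\rho^S$ is \emph{good}'', meaning that pairing $\rho^S$ by consecutive positions creates no cluster of size $\ge\alpha(N)$, and ``the gel-side dynamics produces exactly $S^c$ as gel set''. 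Since clusters only grow under edge-additions, ``good'' is equivalent to the final pairing having no component of size $\ge\alpha(N)$, which is precisely the condition defining $CM'(S,B)$.

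With this decoupling the joint probability computation is routine. Fix $S\subset[N]$, $B\in\bN$, and a graph $G$ on $B$ arms of $S$, and let $D_S=\sum_{v\in S}\deg(v)$. Independence of the $S$- and gel-sides gives
\[
\pp(S(t)=S,\,B(t)=B,\,G_S(t)=G)\;=\;\pp(\rho^S\text{ pairs to }G)\cdot \pp(\text{gel-side dynamics yields gel set }S^c),
\]
with the second factor not depending on $G$. The first factor equals $\one{G\text{ good}}\cdot(1-e^{-t})^B(e^{-t})^{D_S-B}\cdot 2^{\lfloor B/2\rfloor}\lfloor B/2\rfloor!/B!$: we need the set of activated $S$-arms to be exactly the arms of $G$ (probability $(1-e^{-t})^B(e^{-t})^{D_S-B}$ by independence of the clocks on $S$-arms), and then the uniform ordering to be one of the $2^{\lfloor B/2\rfloor}\lfloor B/2\rfloor!$ sequences that pair by position to $G$. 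Since this is the same for every good $G$, the conditional distribution of $G_S(t)$ given $(S(t),B(t))$ is uniform on the set of good graphs on $B$ arms of $S$.

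Finally, I would compare with $CM'(S,B)$: by definition it is produced by a uniform sequence of $B$ arms in $S$ conditioned on the resulting pairing being good, and each good graph arises from exactly $2^{\lfloor B/2\rfloor}\lfloor B/2\rfloor!$ such sequences, so the induced law on graphs is also uniform on good graphs, matching the conditional distribution of $G_S(t)$ above. The main conceptual step is the structural decoupling between $S$-arm and gel-arm pairings; the rest is counting, with a minor bookkeeping caveat when $B$ is odd (the unpaired arm must appear in position $B$ of $\rho^S$) that affects none of the counts.
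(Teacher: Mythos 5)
Your structural observation --- that every time-$t$ bond involving an arm on a particle of $S(t)$ joins two arms on particles of $S(t)$, and hence that $G_S(t)$ is the consecutive pairing of $\rho^S$ --- is correct and is indeed the key insight. The gap is in the asserted factorization
\[
\pp\bigl(S(t)=S,\,B(t)=B,\,G_S(t)=G\bigr)=\pp\bigl(\rho^S\text{ pairs to }G\bigr)\cdot\pp\bigl(\text{gel side yields }S^c\bigr),
\]
which you justify by claiming that $\{S(t)=S\}$ is the intersection of a $\rho^S$-measurable event (``$\rho^S$ good'') and a $\rho^{S^c}$-measurable event. This is false: the event also constrains the \emph{interleaving}, because the algorithm pairs consecutive in-solution activations of the \emph{global} order. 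Whether the waiting arm at the moment an $S$-arm activates is itself an $S$-arm (which is forced if $S(t)=S$) depends on where $\rho^S$ and $\rho^{S^c}$ sit relative to one another. Concretely, take $S=\{1\}$, let $\rho^S$ consist of a single arm, and let $\rho^{S^c}$ be an ordering whose consecutive pairing would send all of $\bar S$ into the gel. If $\rho^S$ is placed after all of $\rho^{S^c}$ then $S(t)=S$; if $\rho^S$ is placed first it becomes the lone waiting arm and is bonded to $\rho^{S^c}_1$, creating a mixed cluster, so $S(t)\neq S$ (and the $\bar S$-pairing is shifted by one, so the gel side also changes). Thus the same pair $(\rho^S,\rho^{S^c})$ yields different values of $S(t)$ for different interleavings, and your right-hand side has no factor to carry that dependence.

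The paper's proof does not attempt this decoupling. It conditions on $|\sigma(t)|$ (under which the global activation order $\sigma$ is uniform), fixes a good $\ell=\sigma_S$ and an arbitrary $\bar\ell=\sigma_{\bar S}$, and proves directly that the \emph{number of interleavings} $\sigma$ with $\sigma_S=\ell$, $\sigma_{\bar S}=\bar\ell$ and $S(\sigma)=S$ is the same for every good $\ell$. This is achieved by the bijection replacing $\ell_i$ by $\ell'_i$ at the same global positions while keeping $\bar\ell$ and the interleaving fixed; the ``minute of reflection'' verifying $S(\sigma')=S$ is an induction along the algorithm, using that the $S$-positions, the $\bar S$-arms, and (since both $\ell$ and $\ell'$ are good) the fact that no $S$-cluster ever precipitates are all preserved, so the waiting-arm and gel-side states evolve identically in $\sigma$ and $\sigma'$ and no mixed bond appears. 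Your proof needs this interleaving-counting step; without it the asserted independence is a genuine gap, not a bookkeeping caveat, and the otherwise-correct counting at the end does not establish the lemma.
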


\begin{proof}
Denote by $\s(t)$ the (ordered) sequence of arms activated up to time $t$ (whether they are in solution or not), and $|\s(t)|$ its length, that is, the total number of activated arms. Define $S(\s)$ to be the particles in solution when we activate the arms in the order $\s$ and perform our algorithm, that is, bind the first and second arm in solution, the third and fourth in solution, and so on. For $S \subset [N]$, let also $\s_S(t)$ be the sequence of arms activated on particles of $S$.

For $S \subset [N]$ and $k \geq 0$, denote $L_{S,k}$ the ordered sequences $\ell = (\ell_1, \dots, \ell_k)$ of $k$ distinct arms in $S$, and $L'_{S,k}$ those sequences such that linking $\ell_1$ and $\ell_2$, $\ell_3$ and $\ell_4$ and so on, does not create a large component. Our goal is exactly to show that
\[
\P( S(t) = S, \s_S(t) = \ell)
\]
is independent of $\ell \in L'_{S,k}$.

To begin with, we can split this event according to the arms that are activated in $\bar{S} = [N] \bsl S$, by writing
\begin{align*}
& \P ( S(t) = S, \s_{S}(t) = \ell)\\
& = \sum_{r \geq 0} \sum_{\bar{\ell} \in L_{\bar{S},r}} \P( S(t) = S, \s_S(t) = \ell, \s_{\bar{S}}(t) = \bar{\ell}) \\
& = \sum_{r \geq 0} \sum_{\bar{\ell} \in L_{\bar{S},r}} \P(|\s(t)| = k+r) \P \left ( S(t) = S, \s_S(t) = \ell, \s_{\bar{S}}(t) = \bar{\ell} \middle | |\s(t)| = k+r \right ).
\end{align*}
Now, as we already mentioned, conditionally on $|\s(t)|$, the configuration is given by a uniform ordering of $|\s(t)|$ arms, i.e. $\s(t)$ is uniform in $L_{[N],|\s(t)|}$. Therefore, for $\bar{\ell} \in L_{\bar{S},r}$,
\[
\begin{split}
\P \left ( S(t) = S, \s_S(t) = \ell, \s_{\bar{S}}(t) = \bar{\ell} \right. & \left. \vphantom{\s_{\bar{S}}(t) = \bar{\ell}} \middle | |\s(t)| = k+r \right ) \\
& = \frac{\# \{ \s \in L_{[N],k+r}, \s_S = \ell, \s_{\bar{S}} = \bar{\ell}, S(\s) = S \}}{\# \{ \s \in L_{[N],k+r}, \s_S = \ell, \s_{\bar{S}} = \bar{\ell} \}}.
\end{split}
\]
The denominator clearly only depends on $k$ and $r$. Therefore, it suffices to show that the numerator is independent of $\ell$. So consider another $\ell' \in L'_{S,k}$. There is a clear bijection
\[
\left \{ \s \in L_{[N],k+r}, \s_S = \ell, \s_{\bar{S}} = \bar{\ell}, S(\s) = S \right \} \lra \left \{ \s \in L_{[N],k+r}, \s_S = \ell', \s_{\bar{S}} = \bar{\ell}, S(\s) = S \right \}.
\]
Indeed, take an element $\s$ of the former set, and replace $\ell_i$ by $\ell'_i$ for $i = 1, \dots, k$. This obviously provides a $\s'$ such that $\s'_S = \ell', \s'_{\bar{S}} = \bar{\ell}$, and minute of reflection also allows to check that $S(\s') = S$. There is an obvious inverse mapping, and we have thus a bijection between these two sets, what allows to conclude.
\end{proof}

As in \cite{MN0}, it is possible to prove an extension of this result. We first define a gelation stopping time as in \cite{MN0}. For any $t \ge 0$, the natural filtration of our model is  
\[
\cF_t = \sigma \left ( e_a \unn{e_a \leq t}, a \in A \right ),
\]
where $A$ is the set of arms and $e_a$ the clock on arm $a$. Similarly, for a subset $S$ of $[N]$, we define the filtration generated by clocks attached to particles in $S$ 
\[
\cF^S_t := \sigma \left ( e_a \unn{e_a \leq t}, a \in A_S \right ),
\]
where $A_S$ are the clocks on the arms of the particles of $S$.

\begin{defn}
We say that $\tau$ is a {\em gelation stopping time} if
\begin{itemize} 
\item $\tau$ is a $(\cF_t)_{t \ge 0}$-stopping time,
\item for any $S \subset [N]$ and $t \ge 0$, conditionally on $\{S(t)=S\}$, $\tau \unn{\tau \le t}$ is independent of $\cF_t^S$. 
\end{itemize} 
\end{defn} 

Two important examples of a gelation stopping time are 
\begin{itemize}
\item any given deterministic time $t \geq 0$. 
\item the $k$-th gelation time $\tau_k$ for a given $k \in \bN$. 
\end{itemize} 

The fact that the gelation times $\tau_k$ are gelation stopping times is easy to see. Indeed, conditionally on $\{S(t)=S\}$, $\tau_k\unn{\tau_k \leq t}$ is determined by the $k$-th gelation time in $(\cG_{\bar{S}}(s))_{s \leq t}$, and it is therefore independent of $\cF_t^S$. 

On the other hand, for instance, the first time after $\tau_k$ that a component of size at least $\a(N)/2$ is created is a $(\cF_t)$-stopping time, but is {\em not} a gelation stopping time.

Similarly to what is done in \cite{MN0}, the previous result extends to gelation stopping times. The proof is similar to that of \cite{MN0}. 

\begin{lemma} \label{lem:combiB2}
If $\tau$ is a gelation stopping time, then conditionally on $S(\tau)$ and $B(\tau)$, the configuration in solution $G_S(\tau)$ has the distribution of a $CM'(S(\tau),B(\tau))$.
\end{lemma}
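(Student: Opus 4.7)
My plan: extend the bijection argument of Lemma~\ref{lem:combiB1} to a gelation stopping time. Fix $S \subseteq [N]$, $B \ge 0$, and $\ell, \ell' \in L'_{S,B}$. Since $\{S(\tau) = S, \s_S(\tau) = \ell\} \subseteq \{B(\tau) = B\}$, it is enough to prove
\[
\pp(S(\tau) = S, \s_S(\tau) = \ell) = \pp(S(\tau) = S, \s_S(\tau) = \ell'),
\]
which then yields the uniform distribution on $L'_{S, B}$ claimed in the statement.

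Following Lemma~\ref{lem:combiB1}, first decompose each side according to the activations outside $S$. Writing $\bar{S} = [N] \setminus S$,
\[
\pp(S(\tau)=S, \s_S(\tau)=\ell) = \sum_{r \ge 0} \sum_{\bar\ell \in L_{\bar{S}, r}} \pp(S(\tau)=S, \s_S(\tau)=\ell, \s_{\bar{S}}(\tau) = \bar\ell),
\]
so it suffices to show the invariance of each summand under the exchange $\ell \leftrightarrow \ell'$. This is where the gelation stopping time property enters: for every $t \ge 0$, conditionally on $\{S(t) = S\}$, the truncated random variable $\tau \unn{\tau \le t}$ is independent of $\cF_t^S$, and the inner trajectory $\s_S(t)$ is $\cF_t^S$-measurable. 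Informally, the stopping rule $\tau$ only probes the clocks of particles that have gelated, and cannot tell the orderings $\ell$ and $\ell'$ inside $S$ apart.

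Fixing $t$ and conditioning on $\{\tau = t, S(t) = S, \s_{\bar{S}}(t) = \bar\ell, |\s(t)| = B+r\}$, we are then back to the situation of Lemma~\ref{lem:combiB1}: conditionally on the total number of activations, the full ordering $\s(t)$ is uniform over sequences of $B+r$ distinct arms of $[N]$, and we just have to count the interleavings $\sigma \in L_{[N], B+r}$ with $\sigma_S = \ell$ (resp.\ $\ell'$), $\sigma_{\bar{S}} = \bar\ell$ and $S(\sigma) = S$. The same involution as in Lemma~\ref{lem:combiB1} --- ``replace $\ell_i$ by $\ell'_i$ in the $S$-slots'' --- is a bijection between these two sets, because $\ell, \ell' \in L'_{S, B}$ both avoid creating a large component and because $S(\sigma)$ depends on $\sigma$ only through the positions of $S$- versus $\bar{S}$-slots and through $\sigma_{\bar{S}}$. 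The two conditional probabilities therefore agree; integrating over $t$ and summing over $(r, \bar\ell)$ concludes the proof.

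The main difficulty is the rigorous handling of the conditioning on $\{\tau = t\}$, which has zero probability once $\tau$ is continuously distributed --- as is the case for the gelation times $\tau_k$. The standard remedy, already employed in \cite{MN0}, is to approximate $\tau$ by its dyadic rounding $\tau_n = 2^{-n} \lceil 2^n \tau \rceil$, check that each $\tau_n$ is again a gelation stopping time taking only countably many values (so the above decomposition becomes a genuine sum over atoms $\{\tau_n = k 2^{-n}\}$), prove the statement for $\tau_n$ atom by atom, and then let $n \to \infty$ using the c\`adl\`ag regularity of $t \mapsto (S(t), \s_S(t))$ and the right-continuity of $(\cF_t)$.
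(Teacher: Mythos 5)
Your overall strategy --- reducing to the bijection argument of Lemma~\ref{lem:combiB1}, using the defining property of a gelation stopping time to factor out $\tau$, and handling the zero-probability conditioning by dyadic discretization --- is the right one, and matches the route the paper itself defers to \cite{MN0} for. Two remarks.

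First, the intermediate re-derivation through $\bar\ell$-decomposition is more work than needed: once you have discretized, you can cite Lemma~\ref{lem:combiB1} directly rather than re-running its bijection inside the conditioning.

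Second, and more substantively, the assertion that each dyadic rounding $\tau_n = 2^{-n}\lceil 2^n\tau\rceil$ is ``again a gelation stopping time'' is not justified and is not obviously true: the definition requires that, conditionally on $\{S(t)=S\}$, $\tau_n\unn{\tau_n\leq t}$ be independent of $\cF_t^S$, but $\tau_n\unn{\tau_n\leq t}$ is a function of $\tau\unn{\tau\leq t'}$ with $t'=2^{-n}\lfloor 2^n t\rfloor<t$, and the hypothesis on $\tau$ only gives independence of $\cF_{t'}^{S'}$ conditionally on $\{S(t')=S'\}$, which is a different conditioning event and a smaller filtration. Fortunately you do not actually need $\tau_n$ to be a gelation stopping time: it suffices to apply the gelation-stopping-time property of $\tau$ itself at each \emph{deterministic} dyadic time $t=k2^{-n}$. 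Indeed, for $k\geq 1$ the event $\{\tau_n=k2^{-n}\}=\{(k-1)2^{-n}<\tau\unn{\tau\leq k2^{-n}}\}$ is a function of $\tau\unn{\tau\leq k2^{-n}}$, which is, conditionally on $\{S(k2^{-n})=S\}$, independent of $\cF_{k2^{-n}}^S$ and in particular of the $\cF_{k2^{-n}}^S$-measurable event $\{\s_S(k2^{-n})=\ell\}$. Factoring and invoking Lemma~\ref{lem:combiB1} at time $k2^{-n}$ gives, for each atom, a probability independent of $\ell\in L'_{S,B}$; summing over $k$ and letting $n\to\infty$ (using right-continuity of $t\mapsto(S(t),\s_S(t))$ and bounded convergence, as you note) yields the result. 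With this repair the argument is complete.
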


Instead of considering the particles in solution themselves, we can take a lighter conditioning on the number of particles in solution and their degree distribution. We thus define a $CM(M,\mu,B)$ by taking $M$ vertices with degree distribution $\mu$ and then taking the pairing given by a uniform choice of a sequence of $B$ arms. As above, $CM'(M,\mu,B)$ is a $CM(M,\mu,B)$ conditioned on having no large component. By summing the previous result over all the $S$ with the same cardinality and the same measure of degrees, and by exchangeability, we get the following.

\begin{lemma} \label{lem:combiB3}
If $\tau$ is a gelation stopping time, then conditionally on $N_{\tau}$ and $\mu_{\tau}$, the configuration in solution $G_S(\tau)$ has the distribution of a $CM'(N_{\tau},\mu_{\tau},B(\tau))$.
\end{lemma}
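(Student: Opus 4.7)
The plan is to deduce this from Lemma \ref{lem:combiB2} by marginalising out the precise identity of $S(\tau)$, retaining only the aggregate information $(N_\tau, \mu_\tau)$; as the authors themselves hint, this is a ``sum over all $S$ with the same cardinality and same degree distribution'' argument, made formal by exchangeability.

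First I would apply the tower property: for any event $A$ (measurable for graphs up to isomorphism),
\[
\pp \left ( G_S(\tau) \in A \mid N_\tau, \mu_\tau, B(\tau) \right ) = \ee \left [ \pp \left ( G_S(\tau) \in A \mid S(\tau), B(\tau) \right ) \mid N_\tau, \mu_\tau, B(\tau) \right ],
\]
and substitute Lemma \ref{lem:combiB2} into the inner conditional probability, turning it into $\pp(CM'(S(\tau), B(\tau)) \in A)$.

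The heart of the matter will then be to verify that the function $S \mapsto \pp(CM'(S, B) \in A)$ depends on $S$ only through the pair $(|S|, \mu_S)$, where $\mu_S$ denotes the empirical distribution of degrees of the particles in $S$. This is a short exchangeability check: any degree-preserving bijection $\phi \colon S \to S'$ induces a bijection of the finite set of ordered sequences of $B$ distinct arms belonging to particles in $S$, which is the sample space defining $CM(S,B)$, and also of the subset defining $CM'(S,B)$, since ``having no large component'' is an isomorphism-invariant property. Under this bijection the resulting graph is sent to its $\phi$-image, so isomorphism-invariant events $A$ have identical probabilities for any two admissible $S, S'$ with matching cardinalities and degree profiles.

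Granted this invariance, the outer conditional expectation collapses to a constant equal to $\pp(CM'(N_\tau, \mu_\tau, B(\tau)) \in A)$, which is precisely the claim. I do not anticipate any genuine obstacle: the entire probabilistic content lies in the preceding lemma, and the only point demanding mild care is being explicit about restricting to isomorphism-invariant events --- which is already the standing convention of the paper (``graphs up to graph isomorphism''). The present lemma, despite its slightly more abstract conditioning, is thus essentially a cosmetic restatement of Lemma \ref{lem:combiB2}.
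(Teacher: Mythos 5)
Your proposal is correct and follows exactly the route the paper sketches in the sentence preceding the lemma ("By summing the previous result over all the $S$ with the same cardinality and the same measure of degrees, and by exchangeability"). The tower-property step plus the exchangeability/degree-preserving-bijection check is precisely that argument spelled out, so no comparison is needed.
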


Conditioning on the total number of activated arms in solution will turn out to be the most useful to us; see in particular the description of the alternative model in Section \ref{sec:altmodel}. However, we can also condition on $\pi_t$, the empirical distribution of activated arms in solution, to get a more natural result directly related to the CM. The proof is again done in a similar fashion.

\begin{lemma} \label{lem:combipi}
If $\tau$ is a gelation stopping time, then conditionally on $N_{\tau}$ and $\pi_{\tau}$, the configuration in solution $G_S(\tau)$ has the distribution of a $CM(N_{\tau},\pi_{\tau})$, conditioned on having no large cluster.
\end{lemma}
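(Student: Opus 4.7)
The plan is to deduce this from Lemma \ref{lem:combiB3} (or equivalently \ref{lem:combiB2}) by a disintegration-and-averaging argument, exactly parallel to the passage from Lemma \ref{lem:combiB2} to Lemma \ref{lem:combiB3}. By Lemma \ref{lem:combiB3}, conditionally on $(N_\tau, \mu_\tau, B(\tau))$, the configuration $G_S(\tau)$ is distributed as $CM'(N_\tau, \mu_\tau, B(\tau))$, obtained by choosing a uniform sequence of $B(\tau)$ arms among all arms of the $N_\tau$ particles in solution and pairing consecutive arms, conditioned on having no large cluster. The key observation is that the graph structure only involves the activated arms, so $\mu_\tau$ enters only through the auxiliary selection step and can be integrated out once we record, for each particle, its number of activated arms.

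First I would further condition on the function $\kappa : S(\tau) \to \bbN$ giving the number of arms of each particle selected in the sequence. Given $\kappa$, the description of $CM'(N_\tau, \mu_\tau, B(\tau))$ says that the pairing is uniform among all pairings of the $\sum_v \kappa(v) = B(\tau)$ activated arms, conditioned on creating no large cluster. Up to graph isomorphism (which, as recalled in Section \ref{sec:outline}, is the only level at which we consider $G_S(\tau)$), this is nothing but a configuration model on $N_\tau$ vertices with degree sequence $\kappa$, conditioned on no large cluster.

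Second I would marginalize. The pair $(N_\tau, \pi_\tau)$ is exactly equivalent to the multiset $\{\kappa(v) : v \in S(\tau)\}$, since $N_\tau \pi_\tau(k)$ is precisely the number of vertices carrying $k$ activated arms. Because the law of the graph obtained in the previous step depends on $\kappa$ only through this multiset, summing the conditional distributions over all choices of $\mu_\tau$ and of $\kappa$ compatible with a fixed $(N_\tau, \pi_\tau)$ yields the distribution of $CM(N_\tau, \pi_\tau)$ conditioned on having no large cluster, as desired.

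The only point that requires genuine care is the exchangeability book-keeping behind this marginalization: one must check that conditionally on $(N_\tau, \pi_\tau)$, all realizations of $(S(\tau), \kappa)$ consistent with these statistics have the correct weight. This is where the gelation stopping time hypothesis is essential, because it ensures that, given $S(\tau)$, particles in solution play fully symmetric roles (their clocks are conditionally independent of the gel's evolution up to time $\tau$), so that the symmetry reduction used already in Lemma \ref{lem:combiB2} and Lemma \ref{lem:combiB3} applies without modification. Beyond this purely combinatorial step, I do not expect any substantive new difficulty.
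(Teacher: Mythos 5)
Your proposal is correct and is essentially the averaging-by-exchangeability argument that the paper has in mind when it says the proof is ``done in a similar fashion'' to Lemmas~\ref{lem:combiB2} and~\ref{lem:combiB3}. The intermediate conditioning on the per-vertex count $\kappa$ of activated arms is exactly the right device: since the degree sequence of $G_S(\tau)$ (as a graph) already determines the multiset of values of $\kappa$, the joint law of $(G_S(\tau),\kappa)$ given $(N_\tau,\mu_\tau,B(\tau))$ is pinned down by Lemma~\ref{lem:combiB3}, and restricting the $CM'(N_\tau,\mu_\tau,B(\tau))$ law to graphs with degree sequence $\kappa$ gives $CM(N_\tau,\kappa)$ conditioned on no large cluster (the degree $r(v)$ affects neither the number of orderings of a fixed arm set giving a fixed pairing nor the fraction of pairings avoiding a large component, so the factors $\binom{r(v)}{\kappa(v)}$ are constant over graphs with the same $\kappa$ and cancel in the conditioning). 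Since the resulting conditional law depends only on the multiset $\{\kappa(v)\}$, i.e.\ on $(N_\tau,\pi_\tau)$, averaging over $\mu_\tau$ and over $\kappa$ compatible with $(N_\tau,\pi_\tau)$ produces exactly $CM(N_\tau,\pi_\tau)$ conditioned on no large cluster, as claimed.
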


This more natural result will only be useful to us when we want to study typical clusters in solution and use Proposition \ref{prop:locconv}, see Section \ref{sec:lastres}. 

\section{Largest component of a slightly supercritical CM} \label{sec:JL}

This long section is devoted to proving Theorem \ref{th:JLimproved}, and providing some extensions to vertices with ``types'', which should be thought of as the number of free arms of our particles. This section is purely about random graphs, and does not use any specifics of our model. We recall that the notation and the statement of the main result are in Section \ref{sec:CM}.

\subsection{Proof of Theorem \ref{th:JLimproved}}

We outline the argument of \cite{JL} for proving Theorem \ref{th:JLimproved}, and point where it can be slightly improved. 
The beautiful idea in \cite{JL} is to perform simultaneously the random uniform pairings of half-edges and the exploration process of the components, in the following way. Color\footnote{In their article, Janson and Luczak rather label vertices as sleeping and awake, and half-edges as sleeping, active and dead. We use colors here as to not confuse the exploration with the dynamics of our model.} the vertices as white or red and the half-edges as white, yellow or black; white and yellow half-edges are also called bright. Start with all vertices and half-edges white. Pairing one edge with another will be decided by giving the half-edges i.i.d. random maximal ``lifetimes'' $\tau_x$ with distribution $\cE(1)$: namely, each half-edge spontaneously turns black with rate $1$ (unless it was colored black earlier). 

\begin{itemize}
\item[\textbf{C1}] If there are yellow half-edges, go to C2. If there are no yellow half-edges (as in the beginning), select a white vertex by choosing it uniformly at random\footnote{Janson and Luczak rather choose a white half-edge uniformly, but this only changes the order in which we build the components: size-biased here, biased by their number of half-edges in \cite{JL}. This slight modification has the advantage of simplifying a bit the end of the argument.}, color it red and all its half-edges yellow. 
If the chosen vertex has no half-edges, repeat C1, otherwise go to C2.  
\item[\textbf{C2}] Pick a yellow half-edge $e$ (which one does not matter) and color it black.
\item[\textbf{C3}] Wait until the next half-edge $e'$ turns black (spontaneously), and join it with $e$ to form an edge of the graph. 
If the vertex $v$ to which $e'$ belongs is white, change $v$ to red and all other half-edges from $v$ (but $e'$) yellow. Repeat from C1.
\end{itemize}

Let $W(t)$, $Y(t)$, $B(t) = W(t) + Y(t)$ be the number of white, yellow, and bright half-edges at time $t$. We naturally define these processes as c\`adl\`ag. 

As observed in \cite{JL}, this algorithm allows to construct the configuration model compo\-nent-wise, and components are created each time C1 is performed. Note also that the times C1 is performed exactly corresponds to the times where $Y$ cancels. Thus, the exploration of the successive components are performed exactly during the successive excursions of $Y$ above the origin.

\begin{remark} \label{rk:sizebias}
On the other hand, given a graph $\cG$, this algorithm also allows us to explore $\cG$ component by component. Consider indeed that each edge is made up of two half-edges, and perform the same algorithm, up to two slight differences: forget about the exponential clocks, and replace C3 by
\begin{itemize}
\item[\textbf{C3'}] Take $e'$ the other half-edge attached to $e$. If the vertex to which $e'$ belongs is white, change this vertex to red and all other half-edges from this vertex (but $e'$) yellow. Repeat from C1.
\end{itemize}
It is clear that, if $\cG$ is random and obtained by the configuration model, then the sequence of components \emph{explored} by this algorithm has the same distribution as the sequence of components \emph{constructed} by the original algorithm. Since each exploration of a new component is started by choosing a vertex uniformly at random, then the order in which we build the components is actually \emph{biased by their size}.
\end{remark}

Let us fix $t_0 > 0$. In the whole proof, $C$ denotes a constant depending only on $t_0$ and on the fixed parameters $\delta, M, m, \eps_n^{\pm}$, and which may change from line to line.

\paragraph{1.} 
Observe that $(B(t))$ is a death-process starting from $n m\n_1 - 1$. $B(t)$ is decreased by 2 at rate $B(t)$ (except if there remains only one bright half-edge). Lemma 6.2 in \cite{JL} ensures that
\begin{equation} \label{eq:B}
\E \left [ \sup_{t \leq \g_n t_0} \left | B(t) - n m\n_1 \exp(-2t) \right |^2 \right ] \leq C n \g_n.
\end{equation}
Now, let $\cV_k(t)$ be the number of vertices with degree $k \ge 1$ having all their half-edges with lifetimes $\tau_x > t$. Again it is a death process, decreasing by 1 at rate $k \cV_k(t)$, starting from $n \pi\n(k)$ (or $n \pi\n(k) - 1$ for one of them), and as in (6.5) of \cite{JL}, we have
\begin{equation} \label{eq:cVk}
\E \left [ \sup_{t \leq \g_n t_0} \left | \cV_k(t) - n \pi\n(k) \exp(- k t) \right |^2 \right ] \leq C k n \g_n \pi\n(k), \quad k \leq 1 / \g_n.
\end{equation}
The interest in $\cV_k(t)$ is that, at least at the beginning of the construction, it is a very decent approximation of $V_k(t)$, the number of vertices with degree $k \ge 1$ that are still white at time $t$. Better yet, $\cW(t)= \sum_{k \geq 0} k \cV_k(t)$ is also a very decent approximation for the number of white half-edges $W(t) = \sum_{k \ge 0} k V_k(t)$, again at least at the beginning of the exploration. More precisely, one should observe that the difference can only come from half-edges which have been colored at the first step C1 of the algorithm, therefore $\cW$ and $W$ should remain close as long as we have not explored too many components.

Slightly precising the argument of the proof of Lemma 6.3 in \cite{JL}, we find that 
\begin{align*}
& \E \left [ \sup_{t \leq \gamma_n t_0} \left | \cW(t)- n \sum_{k \geq 1} k \pi\n(k) \exp(-kt) \right | \right ] \\
& \leq \sum_{k = 1}^{1/\g_n} k \E \left [ \sup_{t \le \gamma_n t_0} \left | \cV_k(t) - n \pi\n(k) \exp(- k t) \right | \right ]  + \sum_{k > 1/\g_n} k E \left [ \sup_{t \le \gamma_n t_0} \left | \cV_k(t) - n \pi\n(k) \exp(- k t) \right | \right ] \\
& \leq \sum_{k = 1}^{1/\g_n} k \sqrt{C k n \g_n \pi\n(k)} + \sum_{k > 1/\g_n} k n \pi\n(k) \\
& \leq C \sqrt{n \g_n} \sum_{k = 1}^{1/\g_n} \sqrt{k^3 \pi\n(k)} + n \sum_{k > 1/\g_n} k \pi\n(k) \\
& \leq C \sqrt{n \g_n} \left ( \sum_{k = 1}^{1/\g_n} k^{4+\eta} \pi\n(k) \right )^{1/2} \left ( \sum _{k = 1}^{1/\g_n} k^{-1-\eta} \right )^{1/2} + n \g_n^{3 + \eta} \sum_{k > 1/\g_n} k^{4 + \eta} \pi\n(k),
\end{align*}
where, at the second line, we use \eqref{eq:cVk} and Cauchy-Schwarz for $k \leq 1/\g_n$, as well as the trivial bound $\cV_k(t) \leq n \pi\n(k)$ for $k > 1/\g_n$. Finally, our moments assumptions (\ref{eq:degreeass}) allow to conclude that
\begin{equation} \label{eq:tW}
E \left [ \sup_{t \leq \gamma_n t_0} \left | \cW(t)- n \sum_{k \geq 1} k \pi\n(k) \exp(-kt) \right | \right ] \leq C ( \sqrt{n \g_n} + n \g_n^{3 + \eta}).
\end{equation}
In fact, from the proof, it even holds that
\begin{equation} \label{eq:sumkcVk}
E \left [ \sum_{k \geq 1} k \sup_{t \leq \gamma_n t_0} \left | \cV_k(t)- n  \pi\n(k) \exp(-kt) \right | \right ] \leq C ( \sqrt{n \g_n} + n \g_n^{3 + \eta}),
\end{equation}
which will turn out to be useful later on.

\paragraph{2.}
Let now $\cY(t) = B(t) - \cW(t)$, which should be a good approximation to $Y(t) = B(t)-W(t)$, at least at the beginning of the exploration, and define $h_n(t) = m\n_1 e^{-2t} - \sum_{k \geq 1} k \pi\n(k) e^{-k t}$. Then \eqref{eq:B} and \eqref{eq:tW} readily imply that
\begin{equation} \label{eq:tY}
\E \left [ \sup_{t \leq t_0} \left | \frac1n \g_n^{-2} \cY(\g_n t) - \g_n^{-2} h_n(\g_n t) \right | \right ] \leq C \left ( \frac{1}{\sqrt{n \g_n^3}} + \g_n^{1+ \eta} \right ).
\end{equation}
Note that $h_n(t) = F_n(1) e^{-2t} - F_n(e^{-t})$. Since $\g_n \leq \eps_n^+ \to 0$, we may compute the following, where the constants hidden in the $O(\cdot)$ depend only on $(\eps_n^+)$ and $M$, and are uniform in $t \in [0,t_0]$:
\begin{align*}
h_n(\g_n t) & = F_n(1) e^{-2 \g_n t} - F_n(e^{-\g_n t}) \\
& = F_n(1) (1 - 2 \g_n t + 2 \g_n^2 t^2 + O(\g_n^3)) - \left [ F_n(1) + F_n'(1) (e^{- \g_n t} - 1) \right. \\
& \qquad \qquad \qquad \qquad \qquad +{} \frac12 F_n''(1) (e^{- \g_n t} - 1)^2 + \left. O(\g_n^3) \right ] \\
& = F_n(1) ( 1 - 2 \g_n t + 2 \g_n^2 t^2 + O(\g_n^3)) - \left [ F_n(1) + F_n'(1) \left ( - \g_n t + \frac12 \g_n^2 t^2 + O(\g_n^3) \right ) \right. \\
& \qquad \qquad \qquad \qquad \qquad + \left. \frac12 F_n''(1) (- \g_n t + O (\g_n^2))^2 + O(\g_n^3) \right ] \\
& = \g_n t \left ( - 2 m\n_1 + m\n_2 + m\n_1 \right ) \\
& \qquad + \g_n^2 t^2 \left ( 2 m\n_1 - \frac12 \left ( m\n_1 + m\n_2 \right ) - \frac12 \left ( m\n_3 + 2 m\n_2 \right ) \right ) + O (\g_n^3) \\
& = \g_n^2 t + \g_n^2 t^2 \left ( - \frac12 m\n_3 - \frac32 \g_n \right ) + O(\g_n^3) \\
& = \g_n^2 t \left ( 1 - \frac12 m\n_3 t \right ) + O (\g_n^3),
\end{align*}
where we use the definition of the $m\n_i$ and $\g_n = m\n_2 - m\n_1$. In short, we get
\begin{equation} \label{eq:hn}
\sup_{t \leq t_0} \left | \g_n^{-2} h_n(\g_n t) - t \left ( 1 - \frac12 m \n_3 t \right )  \right | \leq C \g_n.
\end{equation}
Note that \eqref{eq:hn} is a purely deterministic statement.

\paragraph{3.}
Now, observe that $V_k(t) \leq \cV_k(t)$, so that $W(t) \leq \cW(t)$, and that moreover, $\cW - W$  can only increase when C1 occurs. Take such a time $t$ when some half-edges are colored due to C1 occurring. Then $Y(t^-) = 0$ and $Y(t)$ is the number of vertices of the vertex chosen at C1, so that $Y(t) \leq \max \{i, \pi\n(i) \neq 0\} \leq (Mn)^{1/(4+\eta)} \leq C n \g_n^3$, where the first inequality comes from \eqref{eq:degreeass}, and the second from the fact that $\eps_n^- \gg n^{-1/4}$. We deduce that 
\[
\cW(t) - W(t) = \cW(t) - B(t) + Y(t) \leq \cW(t) - B(t) + C n \g_n^3.
\]
At other times, $\cW - W$ decreases, thus
\[
\cW(t) - W(t) \leq \sup_{s \leq t} (\cW(t) - B(t)) + C n \g_n^3 = - \inf_{s \leq t} \cY(s) + C n \g_n^3.
\]
For the same reason, up to another factor $\max \{i, \pi\n(i) \neq 0\} \leq C n \g_n^3$, this even actually holds until $t^+$, the first time strictly after $t$ when C1 is performed. To conclude, for all $t \geq 0$,
\begin{equation} \label{eq:error}
0 \leq \sup_{s \leq t^+} \left ( \cW(s) - W(s) \right ) = \sup_{s \leq t^+} \left ( Y(s) - \cY(s) \right ) \leq - \inf_{s \leq t} \cY(s) + C n \g_n^3.
\end{equation}

\paragraph{4.}
Fix now $\dl > 0$ and $\dl' = \dl/(2M)$. Define $t_n = 2/m\n_3$, and notice that $P_n(t) = t(1- t m\n_3/2)$ is positive on $(0,t_n)$ and vanishes on the boundary. For $\eps > 0$ small enough, it is clear that the parabola $P_n$ may be included in a tube of height $\eps$ as in Figure \ref{fig:parabola}. In other words, with $P_n^{\pm} = P_n \pm \eps$, $P_n^-$ vanishes at two points, one on $[0,\dl'/2]$ and one on $[t_n - \dl'/2,t_n]$, whereas $P_n^+ \geq 0$ on $[0,t_n]$ and $P_n^+(t_n+\dl'/2) < 0$. Moreover, one can choose $\eps$ uniform as long as $t_n$ is in a compact interval of $(0,\pinf)$, what holds from our assumptions.

\begin{figure}[htb]
\centering
\includegraphics[width=0.5 \columnwidth]{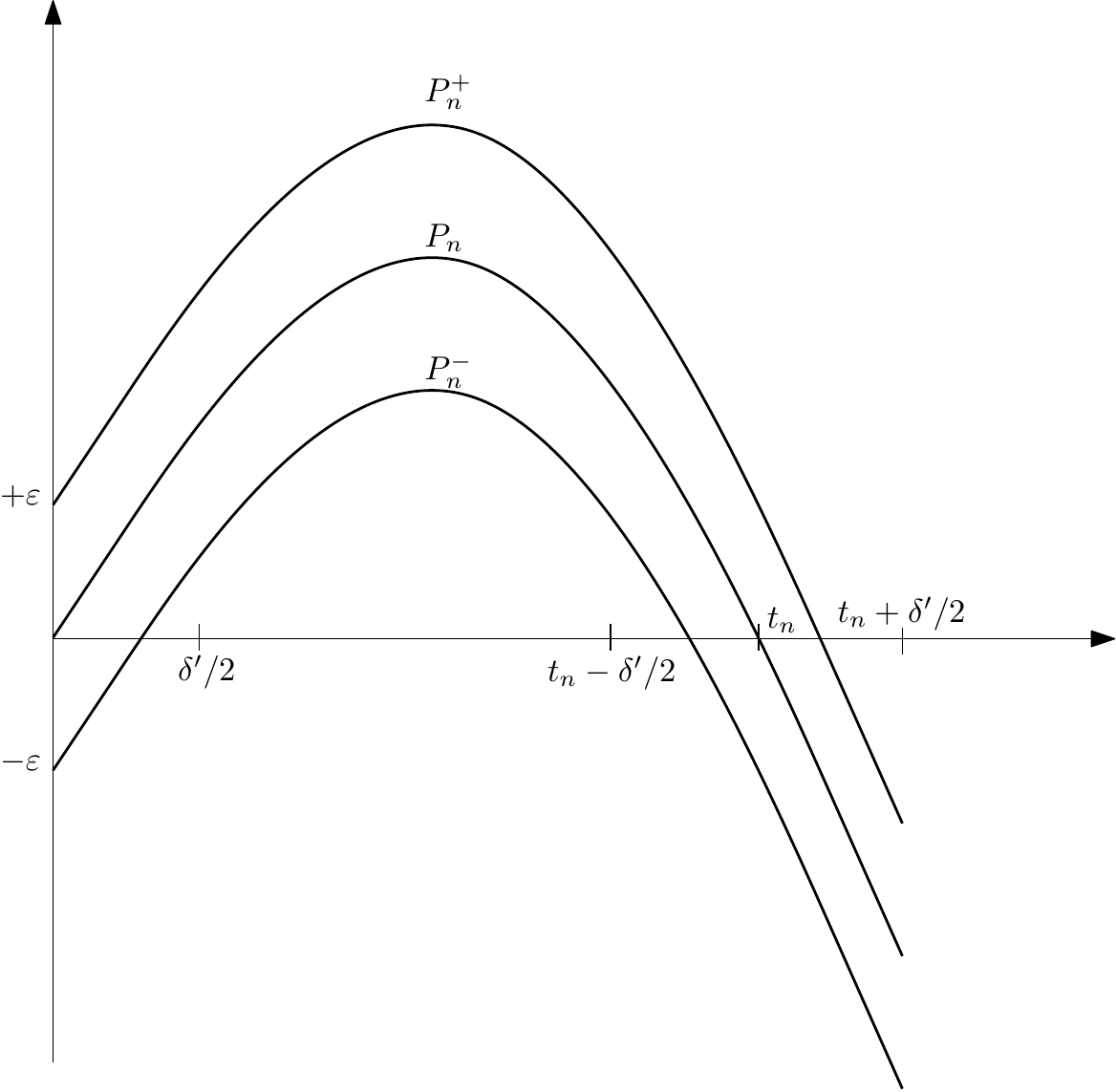}
\caption{The parabolas $P_n^-$, $P_n$, and $P_n^+$.}
\label{fig:parabola}
\end{figure}

Consider the event
\[
E_n = \left \{ \left | \frac1n \g_n^{-2} \cY(\g_n t) - \g_n^{-2} h_n(\g_n t) \right | \leq \frac{\eps}{4} \right \}.
\]
From \eqref{eq:tY} and Markov's inequality, we have $\P(E_n) \geq 1 - p_n$, where
\[
p_n = \frac{4 C}{\eps} \left ( \frac{1}{\sqrt{n \g_n^3}} + \g_n^{1+ \eta} \right ).
\]
From now on, we work on $E_n$, and take $n$ large enough (which can be chosen uniformly) such that $C \g_n \leq \eps/4$ and $C n \g_n^3 \leq \eps/4$. We thus have
\[
\inf_{s \leq t_n} \frac1n \g_n^{-2} \cY(\g_n s) \geq \inf_{s \leq t_n} \frac1n \g_n^{-2} h_n(\g_n s) - \frac{\eps}{4} \geq \inf_{s \leq t_n}  P_n(s) - C \g_n - \frac{\eps}{4} = - C \g_n - \frac{\eps}{4} \geq - \frac{\eps}{2}.
\]
where the second inequality stems from \eqref{eq:hn}. With \eqref{eq:error}, this implies
\begin{equation} \label{eq:error2}
\sup_{s \leq t_n^+} \frac1n \g_n^{-2} \left |  Y(\g_n s) - \cY(\g_n s) \right | \leq - \inf_{s \leq t_n} \frac1n \g_n^{-2} \cY(\g_n s) + C \g_n \leq \frac{3 \eps}{4},
\end{equation}
and from \eqref{eq:hn} and since we work on $E_n$, we get
\begin{equation} \label{eq:Y}
 \sup_{s \leq t_n^+} \left | \frac1n \g_n^{-2} Y(\g_n s) -  P_n(s)  \right |   \leq \frac{3 \eps}{4} + C n \g_n^3 \leq \eps.
\end{equation}
In other words, on $[0,t_n^+]$, $s \mapsto n^{-1} \g_n^{-2} Y(\g_n s)$ is stuck between $P_n^-$ and $P_n^+$. Hence, there must be an excursion of $Y(\g_n \cdot)$ above 0 on an interval $[T_1,T_2]$ containing the interval $[\dl'/2, t_n - \dl'/2]$. We have $T_1 \in [0,\dl'/2]$, and $T_2$ is the first time after $t_n-\dl'/2$ when C1 is performed. If $T_2 > t_n + \dl'/2$, then $t_n^+ > t_n + \dl'/2$, so $n^{-1} \g_n^{-2} Y(\g_n s)$ is below $P_n^+$ at least on the interval $[0,t_n+\dl'/2]$. This forces $Y(t_n+\dl'/2) < 0$, which cannot happen. Hence $T_2 \in [t_n - \dl'/2, t_n + \dl'/2]$, and there is an excursion of $Y$ with size in $[\g_n(t_n - \dl'), \g_n(t_n + \dl')]$. This corresponds to a component $\cC_0$.

\paragraph{5.}
Take now $\g_n \tau_1 \leq \g_n \tau_2 \leq \g_n t_n^+$ be two times when C1 is performed, so that the exploration of a component starts at $\g_n \tau_1$, then one or several components are explored, and an exploration ends at $\g_n \tau_2^-$. Let us compute the size and number $w_k(\tau_1,\tau_2)$ of vertices of degree $k$ in these components. We will first compute the number of vertices $N(\tau_1,\tau_2) = \sum_{k \geq 1} w_k(\tau_1,\tau_2)$ in these components.

To begin with, define
\[
N(t) = \sum_{k \geq 1} V_k(t), \quad \cN(t) = \sum_{k \geq 1} \cV_k(t).
\]
Recall that $V_k \leq \cV_k$, so that $Y - \cY = \cW - W = \sum k |\cV _k - V_k|$, and \eqref{eq:error2} then implies
\begin{equation} \label{eq:NcN}
\sup_{t \leq t_n^+} \frac1n \g_n^{-2} \left | N(\g_n t) - \cN(\g_n t) \right | \leq \frac{3 \eps}{4}.
\end{equation}
Moreover, \eqref{eq:sumkcVk} and Markov's inequality yield
\begin{equation} \label{eq:cNgn}
\sup_{t \leq t_n^+} \left | \frac1n \g_n^{-2} \cN(\g_n t) - \g_n^{-2} g_n(e^{-\g_n t}) \right | \leq \frac{\eps}{4}
\end{equation}
with probability greater than $1 - p_n$. Clearly, $w_k(\tau_1,\tau_2)$ is the number of vertices of degree $k$ which have turned from white to red during $[\g_n \tau_1, \g_n \tau_2)$, i.e. $w_k(\tau_1,\tau_2) = V_k(\g_n \tau_1) - V_k(\g_n \tau_2^-)$, so that $N(\tau_1,\tau_2) = N(\g_n \tau_1) - N(\g_n \tau_2^-)$. Since $\tau_1, \tau_2 \leq t_n^+$, \eqref{eq:NcN} and \eqref{eq:cNgn} imply
\[
\frac1n \g_n^{-2} \left | N(\tau_1,\tau_2) - \left ( g_n(e^{- \g_n \tau_1}) - g_n(e^{- \g_n \tau_2}) \right ) \right | \leq 2 \eps.
\]
A first order Taylor expansion then yields 
\[
g_n(e^{- \g_n \tau_1}) - g_n(e^{- \g_n \tau_2}) = \g_n g_n'(1) (\tau_2 - \tau_1) + O(\g_n^2) = \g_n m\n_1 (\tau_2 - \tau_1) + O(\g_n^2),
\]
where $O(\cdot)$ is uniform in the parameters. The last two equations then yield
\[
\left | \frac1n \g_n^{-1} N(\tau_1,\tau_2) - (\tau_2 - \tau_1) m\n_1 \right | \leq C \g_n. 
\]
We may from now on consider $n$ large enough (uniformly in the parameters), such that $C \g_n \leq \dl/2$. Therefore, we finally obtain
\begin{equation} \label{eq:sizecomp}
\left | \frac1n \g_n^{-1} N(\tau_1,\tau_2) - (\tau_2 - \tau_1) m\n_1 \right | \leq \dl/2.
\end{equation}
In particular, for $\tau_1 = T_1$ and $\tau_2 = T_2$, we get, remembering that $T_2 - T_1 \in [t_n - \dl', t_n + \dl']$, that the number of vertices $N(T_1,T_2)$ of  $|\cC_0|$ verifies
\begin{equation} \label{eq:sizecompC0}
\left | \frac1n \g_n^{-1} |\cC_0| - 2 \frac{m\n_1}{m\n_3} \right | \leq \dl/2 + M \dl' \leq \dl.
\end{equation}

\paragraph{6.}
We have therefore found a component $\cC_0$ of the right size. We shall now prove that with probability at least $1-C p_n$ there is no larger component in the graph. 

First, we know from the above reasoning that, with probability at least $1 - 2p_n$, the following happen.
\begin{itemize}
\item We build, on the interval $[T_1,T_2)$, a component $\cC_0$ of size in $(s_n-\dl,s_n+\dl) n \g_n$, where $s_n = 2 m\n_1/m\n_3$.
\item Any component built before $\cC_0$ has size less than
\[
n \g_n \left ( \frac{\dl}{2} + T_1 m\n_1 \right ) \leq n \g_n \left ( \frac{\dl}{2} + \dl' M \right ) = \dl n \g_n.
\]
\end{itemize}
These equations come from \eqref{eq:sizecomp} and \eqref{eq:sizecompC0}. We shall call this event $\cE_1$. Consider further the events
\begin{itemize}
\item $\cE_2$: we build a component of size $(s_n-\dl,s_n+\dl) n \g_n$ {\em and} at least another one of size greater than $\dl n \g_n$;
\item $\cE_2'$: we build at least one component of size greater than $\dl n \g_n$ {\em before} a component of size $(s_n-\dl,s_n+\dl) n \g_n$. 
\end{itemize} 
Obviously $\cE'_2 \subset \cE_2$, and $\cE_2' \cap \cE_1 = \emptyset$.

As mentioned in Remark \ref{rk:sizebias}, the components are built in a size-biased order, and thus
\[
\P(\cE_2' \vert \cE_2) \geq \dl / (s_n + 2 \dl) \geq p
\]
for some $p > 0$ uniform in the parameters. Hence, 
\[
\P(\cE_2) = \frac{\P(\cE_2' \cap \cE_2)}{\P(\cE_2'|\cE_2)} \leq \frac1p \P(\cE_2') \leq \frac1p (1- \P(\cE_1)) \leq \frac{2 p_n}{p}.
\]
Finally
\[
\P(\cE_1 \bsl \cE_2) \geq \P(\cE_1) - \P(\cE_2) \geq 1 - 2 p_n - \frac{2 p_n}{p} = 1 - 2 \left( \frac1p + 1 \right ) p_n.
\]
The first part of the result follows after noticing that $\cE_1 \bsl \cE_2$ is exactly the event considered there.

\paragraph{7.}
Let us write w.h.p. to mean with probability at least $1 - C p_n$, for some $C$ uniform in the parameters. The reasoning above shows that, w.h.p., $\cC_0$ is indeed $\cC_1(n,\pi\n)$. Hence, to complete the proof, we need to show that
\begin{equation} \label{eq:goal}
\sum_{k \geq 1} k^2 \left | \frac{1}{n \g_n} v_k(\cC_0) - \frac{2}{m_3\n} k \pi\n(k) \right | \leq \dl, \quad \frac1n \sum_{k \geq 1} k^3 v_k(\cC_0) \leq \dl
\end{equation}
w.h.p.

To begin with, w.h.p., $\cC_0$ is the largest component in the graph, and has size at least $c n \g_n$, where $c > 0$ is uniform in the parameters. Since the components are constructed in a size-biased order, there is probability at least $c n \g_n / n = c \g_n$ that it is explored at the first step. If not, there is probability at least $c n \g_n / n' \geq c \g_n$ that it is explored at the second step, where $n - n'$ is the size of the first explored component. Clearly, this shows that the number $K$ of times that C1 is performed before exploring $\cC_0$ is stochastically dominated by a geometric distribution $\cG(1-c\g_n)$ with success parameter $1 - c \g_n$. So take $\b > 1$ such that $2 + \b > 2 / (4 + \eta)$. We have then
\[
\P \left ( \cG(1-c\g_n) > 1/\g_n^{1+\b} \right ) = (1- c \g_n)^{1/\g_n^{1+\b}} \leq \exp - c / \g_n^{\b} \leq C \g_n^{1+\eta}.
\]
Therefore, w.h.p., $ K \leq \g_n^{-(1+\b)}$.

Now, recall from Point 3 above that $0 \leq \cV_k(t) - V_k(t)$, and that this can only increase when C1 occurs. Recall that $\cC_0$ is explored on an interval $[\g_n T_1, \g_n T_2)$, with $T_2$ uniformly bounded. Then, w.h.p.,
\begin{equation} \label{eq:k2VkcVk}
\begin{split}
\sup_{t < \g_n T_2} \sum_{k \geq 1} k^2 \left | V_k(t) - \cV_k(t) \right | & \leq (\max \{i, \pi\n(i) \neq 0\})^2 K \\
& \leq M n^{2/(4+\eta)} \g_n^{-(1+\b)} \\
& \leq \frac{\dl}{2} n \g_n,
\end{split}
\end{equation}
for $n$ large enough, where the last inequality comes from the fact that $\g_n \geq \eps_n^- \gg n^{-1/4}$ and the choice of $\b$. This is when we most crucially use this assumption.

A similar computation as in Point 1 of the proof allows to obtain
\begin{equation} \label{eq:boundk2cVk}
\sum_{k \geq 1} k^2 \E \left [ \sup_{t \leq \g_n T_2} \left | \cV_k(t) - n \pi\n(k) \exp(- k t) \right | \right ] \leq C \left ( \sqrt{\frac{n}{\g_n}} + n \g_n^{2 + \eta} \right ).
\end{equation}
Markov's inequality therefore ensures that
\[
\P \left ( \sum_{k \geq 1} k^2 \E \left [ \sup_{t \leq \g_n T_2} \left | \cV_k(t) - n \pi\n(k) \exp(- k t) \right | \right ] > \frac{\dl}{2} n \g_n \right ) \leq C \left ( \frac{1}{\sqrt{n \g_n^3}} + \g_n^{1 + \eta} \right ).
\]
Along with \eqref{eq:k2VkcVk}, this gives
\[
\sup_{t < \g_n T_2} \sum_{k \geq 1} k^2 \left | V_k(t) - n \pi\n(k) \exp(- k t) \right | \leq \dl n \g_n
\]
w.h.p. But $v_k(\cC_0) = V_k(\g_n T_1) - V_k(\g_n T_2^-)$, so that
\[
\sum_{k \geq 1} k^2 \left | \frac{1}{n \g_n} v_k(\cC_0) - \frac{1}{\g_n} \pi\n(k) \left ( e^{- k \g_n T_1} - e^{- k \g_n T_2} \right ) \right | \leq 2 \dl.
\]
A simple Taylor expansion then allows to conclude to the first part of \eqref{eq:goal}. To get the second part of \eqref{eq:goal}, note that similar computations allow to obtain
\[
\sum_{k \geq 1} k^3 \left | \frac{1}{n} v_k(\cC_0) - \pi\n(k) \left ( e^{- k \g_n T_1} - e^{- k \g_n T_2} \right ) \right | \leq 2 \dl,
\]
so that
\begin{align*}
\frac{1}{n} \sum_{k \geq 1} k^3 v_k(\cC_0) & \leq 2 \dl + \sum_{k \geq 1} k^3 \pi\n(k) \left ( e^{- k \g_n T_1} - e^{- k \g_n T_2} \right ) \\
&  \leq 2 \dl + C \g_n \sum_{k \geq 1} k^4 \pi\n(k)
\end{align*}
and the assumptions \eqref{eq:degreeass} allows to conclude.

All these reasonings are for $n$ large enough, but this can always be done uniformly in the parameters. It therefore suffices to take a larger $K$ in the statement to deal with the small $n$.

\begin{remark}
A $C^1$ estimation of $\phi_n$ could be directly obtained from Point 5 of the proof. However, to get bounds on the second and third moments, we need to use the slightly different techniques of Point 7, essentially due to the fact that the techniques leading to \eqref{eq:NcN} can only provide a bound for the first moment.
\end{remark}

\subsection{Extension to particles with free arms}

\subsubsection{Result}

In our model, coagulations occur thanks to the \emph{activated} arms, but each particle also carries a number of \emph{free} (i.e. not activated yet) arms. Hence, what happens at a gelation event only depends on the distribution of activated arms. However, given the state in solution after a gelation event, the \emph{dynamics} depend on the number of free arms. For this reason, we will need to keep track of the number of free arms. More precisely, we will follow the number of particles with a certain number of active and free arms, and it makes sense to first study this for the configuration model. Rather than the number of free arms, we will instead use the degree as the parameter, since it is a constant for a given particle. Vertices with $k$ activated arms and degree $r$ will just be called $(k,r)$-vertices.

Therefore, let us consider $n$ vertices, with $n \xi\n(k,r)$ of them being $(k,r)$-vertices. Only the active arms are involved in the coagulations, and we therefore define $CM(n,\xi\n)$ as a uniform pairing of the active arms, and let $\cC_1(n,\xi\n)$ and $\cC_2(n,\xi\n)$ be its largest and second largest component. For $k, r \geq 0$, let
\[
\pi\n(k) = \sum_{r \geq 0} \xi\n(k,r), \quad \mu\n(r) = \frac1n \sum_{k = 0}^r \xi\n(k,r).
\]
Therefore, if one forgets about the free arms, then $CM(n,\xi\n)$ has exactly the same distribution as a $CM(n,\pi\n)$ as in the previous section. We further define
\[
\fG_n(x,y) = \sum_{r \geq 0} \sum_{k=0}^r \xi\n(k,r) x^k y^r, \quad \fF_n(x,y) = x \d{\fG_n}{x}(x,y),
\]
as well as $\fm_i\n = m_i^{\pi\n}$ and $\g_n = \g^{\pi\n}$. We reserve this notation with Gothic letters for the ``effective degree''. This will become more relevant for Theorem \ref{th:JLimproved3} below.

Under the same assumptions on $(\pi_n)$, Theorem \ref{th:JLimproved} is then still valid for $CM(n,\xi\n)$. However, it does not tell us about the distribution of the degree of the vertices that belong to the largest component. For this reason, let us denote by $v_{k,r}(\cC)$ the number of $(k,r)$-vertices in a component $\cC$. Introduce
\[
\psi_n(x,y) = \sum_{r \geq 0} \sum_{k=0}^r v_{k,r} \left ( \cC_1(n,\xi\n) \right ) x^k y^r, \quad w_n = \sum_{k, r \geq 0} r^3 v_{k,r} \left ( \cC_1(n,\xi\n) \right ).
\]
We shall need assumptions corresponding to \eqref{eq:degreeass}. Note that the ``$4+\eta$ moment'' assumption on $\pi\n$ is implied by a similar assumption on $\mu\n$. However, we will need to assume $5+\eta$ moments here. We shall therefore fix two positive sequences
\[
n^{-1/4} \ll \eps_n^- \ll \eps_n^+ \ll 1
\]
and three constants $\eta, m, M \in (0,\pinf)$, and assume the following.

\begin{assume}
\begin{equation} \label{eq:degreeass2}
\eps_n^- \leq \g_n \leq \eps_n^+, \quad m \leq \fm_3\n, \quad \sum_{k \geq 1} \mu\n(r) r^{5 + \eta} \leq M.
\end{equation}
\end{assume}
The extension of Theorem \ref{th:JLimproved} corresponding to this model is the following.

\begin{theorem} \label{th:JLimproved2}
Consider a sequence $(\xi\n)$ such that \eqref{eq:degreeass2} holds. Then, for any $\dl > 0$, there exists a constant $K > 0$, depending only on $\dl$, $\eta$, $m$, $M$ and $(\eps_n^{\pm})$ such that, with probability greater than
\[
1 - K \left ( \frac{1}{\sqrt{n \g_n^3}} + \g_n^{1+\eta} \right )
\]
we have
\[
\left ( 2 \frac{\fm_1\n}{\fm_3\n} - \dl \right ) n \g_n \leq |\cC_1(n,\xi\n)| \leq \left ( 2 \frac{\fm_1\n}{\fm_3\n} + \dl \right ) n \g_n, \quad |\cC_2(n,\xi\n)| \leq \dl n \g_n,
\]
and
\[
\left \| \frac{1}{n\g_n} \psi_n - \frac{2}{\fm_3\n} \fF_n \right \|_2 \leq \dl, \quad \frac1n w_n \leq \dl.
\]
\end{theorem}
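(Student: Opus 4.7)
The plan is to reduce Theorem~\ref{th:JLimproved2} to Theorem~\ref{th:JLimproved} via a conditional symmetry argument. Since only activated arms are paired, the multigraph $CM(n,\xi\n)$ depends only on the marginal $\pi\n(k) = \sum_r \xi\n(k,r)$, so its component structure is already controlled by Theorem~\ref{th:JLimproved}. One first verifies that \eqref{eq:degreeass2} implies \eqref{eq:degreeass} for $\pi\n$: since $\xi\n(k,r)=0$ unless $k \leq r$, $\sum_k \pi\n(k) k^{4+\eta} \leq \sum_{k,r} \xi\n(k,r) r^{4+\eta} \leq M$; the identity $\fm_3\n = m_3^{\pi\n}$ and the bounds on $\g_n$ are unchanged. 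Applying Theorem~\ref{th:JLimproved} then yields, with probability at least $1 - K_0(1/\sqrt{n\g_n^3}+\g_n^{1+\eta})$, the required size bounds on $|\cC_1|,|\cC_2|$, a $C^2$-estimate on $\phi_n/(n\g_n) - 2F_n/\fm_3\n$, and a bound on $\sum_k k^3 v_k(\cC_1)/n$.

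The key step is a conditional symmetry. The graph $CM(n,\xi\n)$ may be sampled in two independent stages: (a) a uniform pairing of the activated arms, which determines the multigraph and depends only on $\pi\n$; (b) for each $k$, a uniform assignment of degrees to the $n\pi\n(k)$ vertices of activated-arm count $k$, using exactly $n\xi\n(k,r)$ copies of degree $r$. Hence, conditionally on the graph and on $\cC_1$, the vector $(v_{k,r}(\cC_1))_r$ follows, for each $k$, a multivariate hypergeometric law with
\[
\E\bigl[v_{k,r}(\cC_1) \bigm| \mathrm{graph}\bigr] = v_k(\cC_1)\,\frac{\xi\n(k,r)}{\pi\n(k)}, \qquad \mathrm{Var}\bigl(v_{k,r}(\cC_1) \bigm| \mathrm{graph}\bigr) \leq v_k(\cC_1)\,\frac{\xi\n(k,r)}{\pi\n(k)}.
\]
This turns the extension into a concentration statement for the multivariate hypergeometric.

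To bound the $\|\cdot\|_2$ norm of $\psi_n/(n\g_n) - 2\fF_n/\fm_3\n$ (which, for a bivariate polynomial with coefficients $a_{k,r}$, is equivalent up to a constant to $\sum_{k,r} (1+k+r)^2 |a_{k,r}|$), I would use the decomposition
\[
v_{k,r}(\cC_1) - \tfrac{2n\g_n}{\fm_3\n} k\xi\n(k,r) = \bigl(v_{k,r}(\cC_1) - v_k(\cC_1) \tfrac{\xi\n(k,r)}{\pi\n(k)}\bigr) + \tfrac{\xi\n(k,r)}{\pi\n(k)}\bigl(v_k(\cC_1) - \tfrac{2n\g_n}{\fm_3\n} k\pi\n(k)\bigr).
\]
The second term, deterministic given the graph, is reduced on summing against $y^r$ to the $\|\cdot\|_2$-deviation of $\phi_n$ already controlled by Theorem~\ref{th:JLimproved}; the factors $(1+r)^j$ coming from the $\|\cdot\|_2$ norm are absorbed by a truncation in $r$ (handling $r > R$ via the $(5+\eta)$-moment bound on $\mu\n$) together with the conditional moment $\sum_r(\xi\n(k,r)/\pi\n(k))(1+r)^j$ on the bounded part. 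The first (hypergeometric) term is treated by Cauchy--Schwarz applied to the conditional variance bound above, using $v_k(\cC_1) \leq |\cC_1| \leq Cn\g_n$ and the same truncation-plus-$(5+\eta)$-moment argument; this yields an expected $L^1$-error of order $\sqrt{n\g_n} = o(n\g_n)$, so Markov's inequality absorbs the failure probability into the announced main error term.

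Finally, the bound $\sum_{k,r} r^3 v_{k,r}(\cC_1)/n \leq \dl$ is obtained directly, without any hypergeometric step, by truncation using only $v_{k,r}(\cC_1) \leq n\xi\n(k,r) \leq n\mu\n(r)$ and $|\cC_1| \leq Cn\g_n$: for any $R \geq 1$,
\[
\tfrac1n \sum_{k,r} r^3 v_{k,r}(\cC_1) \leq \tfrac{R^3 |\cC_1|}{n} + \sum_{r>R} r^3 \mu\n(r) \leq C R^3 \g_n + \tfrac{M}{R^{2+\eta}},
\]
and choosing for instance $R = \g_n^{-1/(5+\eta)}$ makes both terms vanish as $\g_n \to 0$, uniformly in the parameters. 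The main obstacle is the Cauchy--Schwarz-plus-truncation estimate in the hypergeometric step: this is exactly where the strengthening from $(4+\eta)$-moments on $\pi\n$ (enough for Theorem~\ref{th:JLimproved}) to $(5+\eta)$-moments on $\mu\n$ in \eqref{eq:degreeass2} is essential, the extra moment on the degree being needed to control the weights $(1+r)^j$ that enter the $\|\cdot\|_2$ norm after passing through the hypergeometric square root.
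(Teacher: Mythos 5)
Your reduction to Theorem~\ref{th:JLimproved} plus a conditional hypergeometric step is genuinely different from the paper's proof. The paper reruns the component-wise exploration of Janson--Luczak on the active arms, but now tracks the refined death processes $\cV_{k,r}(t)$ (the number of $(k,r)$-vertices with no active arm killed by time $t$) and proves the analogue of \eqref{eq:sumcVk} directly, namely
\[
\E\Big[\sum_{k,r}r^2\sup_{t\le\g_nT_2}\big|\cV_{k,r}(t)-n\xi\n(k,r)e^{-kt}\big|\Big]\le C\Big(\sqrt{n/\g_n}+n\g_n^{2+\eta}\Big),
\]
so the $(k,r)$-joint concentration is obtained without ever passing through the marginal $v_k(\cC_1)$.

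Your first (hypergeometric) term, and your third-moment bound, are sound. The gap is in your second term. You need a bound of the form
\[
T_2:=\sum_{k}B_k\,\Big|v_k(\cC_1)-\tfrac{2n\g_n}{\fm_3\n}k\pi\n(k)\Big|\le\dl\,n\g_n,\qquad
B_k:=\sum_r(1+k+r)^2\,\frac{\xi\n(k,r)}{\pi\n(k)},
\]
but Theorem~\ref{th:JLimproved} only gives $\sum_k k^2\,|v_k(\cC_1)-\frac{2n\g_n}{\fm_3\n}k\pi\n(k)|\le\dl_0\,n\g_n$, and the conditional moments $B_k$ are \emph{not} $O(k^2)$ under the standing moment hypotheses: nothing prevents the vertices with a small number $k$ of active arms from having very high degrees, in which case $B_k\gg k^2$. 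Truncating at $r\le R$ does not help: you still have $B_k^{\le R}\le CR^2$ and, since the $r>R$ tail forces $R\sim\g_n^{-1/(3+\eta)}\to\infty$, you would need to apply Theorem~\ref{th:JLimproved} with a precision $\dl_0\sim\dl/R^2\to 0$ that depends on $n$; the constant $K$ in Theorem~\ref{th:JLimproved} depends on $\dl_0$, and tracking that dependence degrades the failure-probability bound below the announced $1-K(1/\sqrt{n\g_n^3}+\g_n^{1+\eta})$ rate when $\eta\le 1$. Variants via Cauchy--Schwarz (e.g.\ $T_2\le(\sum_k k^2D_k)^{1/2}(\sum_k B_k^2D_k/k^2)^{1/2}$) run into the same obstruction: the crude bound $D_k\le Cn\pi\n(k)(1+\g_n k)$ gives $\sum_k B_k^2D_k/k^2\lesssim nM$, so the product is $\sim n\sqrt{\dl_0\g_n}$, again requiring $\dl_0\lesssim\g_n$. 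In short, the hypergeometric split correctly isolates the within-$k$ reshuffling, but the ``bias'' part $T_2$ requires information about the \emph{joint} $(k,r)$ fluctuations of the exploration, which is precisely what the paper's $\cV_{k,r}$-tracking supplies and what cannot be reconstructed from the marginal $C^2$-estimate on $\phi_n$ alone.
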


Of course, the only difference with the previous result is the last part, concerning the number of $(k,r)$-vertices in the largest component. It says as before that the vertices that make up this largest component are chosen according to a size-bias of their number of active arms, whereas the degree is irrelevant.

In particular, as the free arms do not matter for the coagulations, this result could be rephrased using abstract types rather than the total number of arms, but at the price of slightly awkward assumptions. Specifically, we systematically use that $k \leq r$.

A final note is that, because of the $5+\eta$ moment assumption, rather than $4+\eta$ in Theorem \ref{th:JLimproved}, the lower bound on $\g_n$ could be slightly improved. For all intents and purposes, this result will be more than sufficient.

\subsubsection{Proof}

The proof relies on the exact same argument as the one of Theorem \ref{th:JLimproved}. So we perform the algorithm C1, C2, C3 on the active arms, and consider all the same variables and notations. Note that the assumptions \eqref{eq:degreeass} on $\pi\n$ are in force, and thus every single step of the proof is still valid. We wish to prove that
\begin{equation} \label{eq:goal2}
\sum_{r \geq 0} \sum_{k = 0}^r r^2 \left | \frac{1}{n \g_n} v_{k,r}(\cC_0) - \frac{2}{\fm_3\n} k n \xi\n(k,r) \right | \leq \dl
\end{equation}
w.h.p.

To this end, we need to keep track of the extra information provided by the total number of arms, so we additionally introduce the variable $\cV_{k,r}(t)$ for the number of $(k,r)$-vertices that have all \emph{active} half-edges with lifetimes $\tau_x > t$, and similarly $V_{k,r}(t)$ for the number of $(k,r)$-vertices that are still white at time $t$.

The only difference comes from Step 7 of the proof of Theorem \ref{th:JLimproved}. First, $0 \leq \cV_{k,r} - V_{k,r}$, and this can only increase when C1 occurs, which happens $K \leq \g_n^{-(1+\b)}$ times w.h.p. Moreover, w.h.p., the largest component $\cC_0$ is explored on an interval $[\g_n T_1, \g_n T_2)$ with $T_2$ uniformly bounded, and thus
\begin{equation} \label{eq:k2VkrcVkr}
\begin{split}
\sup_{t < \g_n T_2} \sum_{k,r \geq 0} r^2 \left | V_{k,r}(t) - \cV_{k,r}(t) \right | & \leq (\max \{i, \mu\n(i) \neq 0\})^2 K \\
& \leq M n^{2/(5+\eta)} \g_n^{-(1+\b)} \\
& \leq \dl n \g_n.
\end{split}
\end{equation}
This is just \eqref{eq:k2VkcVk} in this context.

We now need an equivalent of \eqref{eq:boundk2cVk}, which we shall get as in Point 1 of the previous proof. As for \eqref{eq:cVk}, $\cV_{k,r}$ is a death process decreasing by one at rate $k \cV_{k,r}(t)$ and starting from $n \xi\n(k,r)$. We therefore have
\begin{equation} \label{eq:cVkr}
\E \left [ \sup_{t \leq \g_n T_2} \left | \cV_{k,r}(t) - n \xi\n(k,r) \exp(- k t) \right |^2 \right ] \leq C k n \g_n \xi\n(k,r), \quad k \leq 1 / \g_n.
\end{equation}
As before, we write
\begin{align*}
\sum_{k, r \geq 0} r^2 & \: \E \left [ \sup_{t \leq \g_n T_2} \left | \cV_{k,r}(t) - n \xi\n(k,r) e^{-k t} \right | \right ] \\
& \leq \g_n^{-1} \sum_{r=0}^{1/\g_n} r \sum_{k=0}^r \E \left [ \sup_{t \leq \g_n T_2} \left | \cV_{k,r}(t) - n \xi\n(k,r) e^{-k t} \right | \right ] \\
& \quad + \sum_{r > 1/\g_n} r^2 \sum_{k=0}^r \E \left [ \sup_{t \leq \g_n T_2} \left | \cV_{k,r}(t) - n \xi\n(k,r) e^{-k t} \right | \right ],
\end{align*}
and deal with each sum on the RHS similarly. First, the second sum on the RHS is bounded by
\[
\sum_{r > 1/\g_n} r^2 \sum_{k=0}^r n \xi\n(k,r) = n \sum_{r > 1/\g_n} r^2 \mu\n(r) \leq n \g_n^{2+\eta} \sum_{r > 1/\g_n} r^{4+\eta} \mu\n(r) \leq M n \g_n^{2+\eta}.
\]
As for the first sum on the RHS, we use \eqref{eq:cVkr} and Cauchy-Schwarz to write that it is bounded by
\begin{align*}
C \sqrt{n \g_n} \sum_{r=0}^{1/\g_n} r \sum_{k=1}^r \sqrt{k \xi\n(k,r)}
& \leq C \sqrt{n \g_n} \sum_{r=0}^{1/\g_n} r \left ( \sum_{k=1}^r k \right )^{1/2} \left ( \sum_{k=1}^r \xi\n(k,r) \right )^{1/2} \\
& \leq C \sqrt{n \g_n} \sum_{r=0}^{1/\g_n} r^2 \sqrt{\mu\n(r)} \\
& \leq C \sqrt{n \g_n} \left ( \sum_{r=0}^{1/\g_n} r^{5+\eta} \mu\n(r) \right )^{1/2} \left ( \sum _{r = 1}^{1/\g_n} r^{-1-\eta} \right )^{1/2} \\
& \leq C \sqrt{n \g_n},
\end{align*}
where the last step comes from \eqref{eq:degreeass2}. To summarize, we obtain
\begin{equation} \label{eq:sumcVk}
\E \left [ \sum_{k, r \geq 0} r^2 \sup_{t \leq \g_n T_2} \left | \cV_{k,r}(t) - n \xi\n(k,r) e^{-k t} \right | \right ]  \leq C \left (\sqrt{\frac{n}{\g_n}}  + n \g_n^{2 + \eta} \right ).
\end{equation}
This is just the equivalent of \eqref{eq:boundk2cVk} in this context. Formula \eqref{eq:goal2} and the rest of the statement can then be obtained as in the previous proof.

\subsection{Percolation on the configuration model}

\subsubsection{Result} \label{sec:percolres}

In our real model, at each time $t$, arms are activated independently with probability $p_t = 1-e^{-t}$. Hence, the degree of a particle is fixed, but its number of activated arms is random. We will thus study the corresponding configuration model in this context.

To be precise, assume that there are $n$ vertices, $n \mu\n(r)$ of them being of degree $r$. Let $G_n = G_{\mu\n}$ be the generating function of $\mu\n$ and define $m_i\n = m_i^{\mu\n}$ the factorial moments of $\mu\n$. Let $p_n \in [0,1]$ and assume that each arm is activated independently with probability $p_n$. We then independently perform a uniform pairing of the activated arms, to get a configuration model $CM(n,\mu\n,p_n)$. We naturally denote $\cC_1(n,\mu\n,p_n)$ and $\cC_2(n,\mu\n,p_n)$ its largest and second largest component, $v_{k,r}(\cC)$ the number of $(k,r)$-vertices in a component $\cC$, and finally
\[
\psi_n(x,y) = \sum_{r \geq 0} \sum_{k=0}^r v_{k,r} \left ( \cC_1(n,\mu\n,p_n) \right ) x^k y^r, \quad w_n = \sum_{k, r \geq 0} r^3 v_{k,r} \left ( \cC_1(n,\mu\n,p_n) \right ).
\]

Of course, conditionally on the activated arms (in a ``quenched'' setting), Theorem \ref{th:JLimproved2} will apply. Two issues however remain: first, one needs to check the assumptions \eqref{eq:degreeass2}. This can be achieved, as we will see, when $p_n$ is chosen so that we get a critical model, namely
\[
p_n = \frac{m_1\n}{m_2\n} (1 + \eps_n).
\]
Secondly the result will then provide information that is conditional on the activated arms, that would not be convenient to use. However, by the law of large number, the initial distribution $\xi\n$ should be almost deterministic. Indeed, if we let
\begin{equation} \label{eq:pkr}
[p]_k^r = \binom{r}{k} p^k (1-p)^{r-k},
\end{equation}
then $\xi\n(k,r)$ should be about $\mu\n(r) [p_n]_k^r$. Therefore, the generating function $\fG_n$ of $\xi\n(k,r)$ should be close to its ``annealed'' version
\[
\bar{\fG}_n(x,y) = \sum_{r \geq 0} \sum_{k=0}^r \mu\n(r) [p_n]_k^r x^k y^r = G_n ((p_n x + 1 - p_n)y).
\]
Ignoring the first order terms, it makes sense to define
\[
q_n(x,y) = \left ( \frac{m\n_1}{m\n_2} x + 1 - \frac{m\n_1}{m\n_2} \right ) y.
\]
With a slight abuse of notation, we will write $x \fG_n$ for the function $x \mapsto x \fG_n(x)$, and so on. We shall again fix two positive sequences
\begin{equation} \label{eq:assepsnpm}
n^{-1/4} \ll \eps_n^- \ll \eps_n^+ \ll 1
\end{equation}
and three constants $\eta, m, M \in (0,\pinf)$, and do the following assumptions.

\begin{assume}
\begin{equation} \label{eq:degreeass3}
m \leq m\n_1 \leq m\n_2, \quad m \leq m\n_3, \quad \eps_n^- \leq \eps_n \leq \eps_n^+, \quad \sum_{k \geq 1} \mu\n(r) r^{5 + \eta} \leq M.
\end{equation}
\end{assume}

We will prove the following consequence of Theorem \ref{th:JLimproved2}.

\begin{theorem} \label{th:JLimproved3}
Consider sequences $(\mu\n)$ and $(p_n)$ such that \eqref{eq:degreeass3} holds. Then, for any $\dl > 0$, there exists a constant $K > 0$, depending only on $\dl$, $\eta$, $m$, $M$ and $(\eps_n^{\pm})$ such that, with probability greater than
\[
1 - K \left ( \frac{1}{\sqrt{n \eps_n^3}} + \eps_n^{1+\eta} \right )
\]
we have
\[
\left ( 2 \frac{m\n_1 m_2\n}{m\n_3} - \dl \right ) n \eps_n \leq |\cC_1(n,\mu\n,p_n)| \leq \left ( 2 \frac{m\n_1 m\n_2}{m\n_3} + \dl \right ) n \eps_n,
\]
and
\[
|\cC_2(n,\mu\n,p_n)| \leq \dl n \eps_n,
\]
and
\[
\left \| \frac{1}{n\eps_n} \psi_n - 2 \frac{m\n_2}{m\n_3} x y G_n'(q_n) \right \|_2 \leq \dl, \quad \frac1n w_n \leq \dl.
\]
\end{theorem}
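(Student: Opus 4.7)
The strategy is to reduce to Theorem~\ref{th:JLimproved2} via a quenched argument. Condition on the family of random variables $A_i$ giving the number of activated arms at each vertex $i$ (so, conditionally, $A_i \sim \Bin(r_i, p_n)$ independently). Given this data, the remaining randomness is exactly a uniform pairing of the active arms among the $n$ vertices, and the graph is distributed as a $CM(n,\xi\n)$ in the sense of Theorem~\ref{th:JLimproved2}, with
\[
\xi\n(k,r) = \frac1n \# \{ i : A_i = k, r_i = r \}.
\]
Since $\E[\xi\n(k,r)] = \mu\n(r) [p_n]_k^r$, the corresponding annealed generating function is
\[
\bar\fG_n(x,y) = \sum_{r\geq 0} \mu\n(r) y^r (p_n x + 1 - p_n)^r = G_n \bigl( (p_n x + 1-p_n) y \bigr).
\]
The relation $q_n(x,y) = (p_n x + 1-p_n) y + O(\eps_n)$ (using $p_n = (m_1\n/m_2\n)(1+\eps_n)$) will allow to identify this annealed object with $G_n(q_n)$ up to negligible errors.

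The next step is to verify the hypotheses~\eqref{eq:degreeass2} of Theorem~\ref{th:JLimproved2} with overwhelming quenched probability. Writing $\fm_i\n$ for the $i$-th factorial moment of the active-arm distribution, one computes $\E[\fm_i\n] = p_n^i m_i\n$, and in particular
\[
\E\bigl[ \g^{\pi\n} \bigr] = p_n^2 m_2\n - p_n m_1\n = p_n m_1\n \eps_n, \qquad \E\bigl[ \fm_3\n \bigr] = p_n^3 m_3\n.
\]
Chebyshev's inequality applied to the sums of independent bounded random variables $A_i^{(j)}$ (for $j=1,2,3$) shows that $\g^{\pi\n}$ and $\fm_3\n$ equal their expectations up to fluctuations of order $n^{-1/2}$, hence of order $o(\eps_n)$ under the assumption $\eps_n \gg n^{-1/4}$. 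The bound on $\sum_r r^{5+\eta} \mu\n(r)$ is deterministic. Thus, there is an event of probability at least $1 - K/\sqrt{n}$ on which the triple $(n,\xi\n)$ satisfies \eqref{eq:degreeass2} with $\g_n \asymp p_n m_1\n \eps_n$ and slightly adjusted constants (where the required lower bound $\g_n \geq \eps_n^-/2$ can be arranged by shrinking $\eps_n^-$). On this event, Theorem~\ref{th:JLimproved2} applies with quenched parameters, and since $n^{-1/2} \ll (n\eps_n^3)^{-1/2}$, the quenched concentration error is absorbed into the probability bound of the theorem.

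Once this reduction is in place, the quantitative conclusions are obtained by substitution. From Theorem~\ref{th:JLimproved2} the size of $\cC_1$ is, up to $\dl n \g^{\pi\n}$, equal to
\[
2 \frac{\fm_1\n}{\fm_3\n} n \g^{\pi\n} = 2 \frac{p_n m_1\n}{p_n^3 m_3\n} n \cdot p_n m_1\n \eps_n (1+o(1)) = 2 \frac{(m_1\n)^2}{p_n \, m_3\n} n\eps_n (1+o(1)),
\]
and using $p_n m_2\n = m_1\n(1+\eps_n) = m_1\n + o(1)$, this is $2 (m_1\n m_2\n / m_3\n) n \eps_n + o(n \eps_n)$, as required; the bound on $\cC_2$ is obtained by the same rescaling. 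The upper bound on $w_n$ and the second-moment generating function estimate transfer similarly, after rewriting $(n\g^{\pi\n})^{-1}$ as $(n\eps_n)^{-1} \cdot (\eps_n / \g^{\pi\n})$.

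The main obstacle is the translation of the $\|\cdot\|_2$ estimate $\|(n\g^{\pi\n})^{-1}\psi_n - 2(\fm_3\n)^{-1}\fF_n\|_2 \leq \dl$ into the form claimed in the theorem. For this one needs to show that the quenched generating function $\fF_n = x\partial_x \fG_n$ is $\|\cdot\|_2$-close to its annealed version $\bar\fF_n(x,y) = p_n x y G_n'((p_n x + 1-p_n) y)$, and that the latter is in turn close to $p_n xy G_n'(q_n)$ at the critical parameter. The first point follows from a Chebyshev estimate: every partial derivative of $\fG_n$ of order $(a,b)$ with $a+b \leq 3$ is an empirical mean of i.i.d. random variables of the form $A_i^{(a)} r_i^{(b)} x^{A_i-a} y^{r_i-b}$, whose variance is uniformly bounded on $[0,1]^2$ by $\frac1n \sum_r \mu\n(r) r^{2(a+b)} \leq M/n$ under the $(5+\eta)$-moment assumption. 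A covering argument on $[0,1]^2$ combined with the a.s.\ Lipschitz control on the corresponding next derivative promotes pointwise concentration to uniform concentration with error $o(\eps_n)$. The second point is an elementary Taylor estimate in one variable, using $|p_n - m_1\n/m_2\n| = O(\eps_n)$ and the uniform boundedness of $G_n'$ and $G_n''$ on $[0,1]$ given by the moment assumption. Combining these with the quenched estimate from Theorem~\ref{th:JLimproved2} (absorbing the various $O(\eps_n)$ into $\dl$ by taking $n$ large depending only on the fixed parameters) yields the claimed conclusion.
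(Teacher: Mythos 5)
Your plan follows the same overall route as the paper: condition on the set of activated arms, recognize the quenched graph as a $CM(n,\xi\n)$, verify the hypotheses of Theorem~\ref{th:JLimproved2} with high quenched probability, apply it, and then translate $(n\g_n)^{-1}\psi_n \approx 2(\fm_3\n)^{-1}\fF_n$ into the annealed statement about $G_n'(q_n)$. The substitution $\fm_i\n \approx p_n^i m_i\n$ and the Taylor expansion in $p_n - m_1\n/m_2\n = O(\eps_n)$ are also exactly what the paper does.

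The genuine gap is in the concentration step. You claim that for every mixed partial of order $(a,b)$ with $a+b\leq 3$, the pointwise variance of the empirical mean is bounded by $\frac1n\sum_r \mu\n(r) r^{2(a+b)} \leq M/n$ ``under the $(5+\eta)$-moment assumption.'' For $a+b=3$ this requires $\sum_r r^6 \mu\n(r) \leq M$, i.e.\ a sixth moment, which \eqref{eq:degreeass3} does not provide when $\eta<1$. The summands $A_i^{(a)} r_i^{(b)}$ are also not bounded random variables, and the ``Lipschitz control on the next derivative'' invoked in your covering argument would in turn need a fourth-order derivative bound, again beyond what the $(5+\eta)$-moment gives. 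So the quenched event on which \eqref{eq:degreeass2} holds and $\|\fG_n - \bar{\fG}_n\|_3\leq\dl$ cannot be shown to have probability $1-K/\sqrt n$ by the argument as written.

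The paper instead proves this concentration in $\ell^1$ form (Lemma~\ref{lem:momentcomp}), bounding $\sum_{r,k} r^j |\xi_p\n(k,r)-\mu\n(r)[p]_k^r|$ for $j=2,3$ rather than a $\sup$ norm; this directly dominates $\|\fG_n-\bar{\fG}_n\|_3$ because $k^{(a)} r^{(b)} \leq r^{a+b}$, so no covering argument is needed. The key device you are missing is the truncation at $r\le 1/\eps_n$: for $r\le 1/\eps_n$ one uses the binomial first-moment estimate $\ee|n\xi_p\n(k,r)-n\mu\n(r)[p]_k^r| \leq \sqrt{n\mu\n(r)[p]_k^r}$ plus Cauchy--Schwarz, and the factor $r^{1/2}\le \eps_n^{-1/2}$ keeps everything at the $(5+\eta)$-moment level, giving the $1/\sqrt{n\eps_n^3}$ error; for $r>1/\eps_n$ the contribution is bounded deterministically by the $(5+\eta)$-moment, producing the $\eps_n^{1+\eta}$ term. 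That is precisely where the two pieces of the probability bound $K(1/\sqrt{n\eps_n^3}+\eps_n^{1+\eta})$ stated in the theorem come from, whereas your $K/\sqrt n$ neither follows from the hypotheses nor accounts for the $\eps_n^{1+\eta}$ tail term. With Lemma~\ref{lem:momentcomp} in hand, the rest of your outline goes through.
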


This shows that, in a configuration model with parameters $n$ and $\mu\n$, the right percolation threshold to obtain a largest component of size of order $n \eps_n$ is $m\n_1/m\n_2 (1 + \eps_n)$. Moreover, taking $x=1$ or $y=1$ in the second part of the statement shows that the particles belonging to the largest component are chosen both by a size-bias of their number of activated arms, and by a size-bias of their total number of arms. This will prove fundamental in our analysis.

\subsubsection{Proof}

Let us proceed with the proof of this result. As we mentioned, the number $n \xi\n(k,r)$ of $(k,r)$-vertices should be close to $\mu\n(r) [p_n]_k^r$. More precisely, we will prove the following inequality. We denote here $\xi_p$ for the empirical distribution of $(k,r)$-vertices when each arm is activated independently with probability $p$, and $\Q$ for the probability under which the activated arms are chosen.

\begin{lemma} \label{lem:momentcomp}
For any $\dl > 0$, there is a constant $K \in (0,\pinf)$, depending only on $\dl$ and $M$, such that for any $p \in [0,1]$, $n \geq 1$ and $a_n > 0$,
\begin{equation} \label{eq:2moment}
\Q \left ( \sum_{r \geq 0} \sum_{k=0}^r r^2 \left | \xi_p\n(k,r) - \mu\n(r) [p]_k^r \right | > \dl a_n \right ) \leq K \left( a_n^{1+\eta} + \frac{1}{\sqrt{n a_n^3}} \right )
\end{equation}
and
\begin{equation} \label{eq:3moment}
\Q \left ( \sum_{r \geq 0} \sum_{k=0}^r r^3 \left | \xi_p\n(k,r) - \mu\n(r) [p]_k^r \right | > \dl \right ) \leq K \left( a_n^{1+\eta} + \frac{1}{\sqrt{n a_n^3}} \right ).
\end{equation}
\end{lemma}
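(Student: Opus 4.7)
The key probabilistic observation is that for each degree $r \geq 1$, if we condition on the initial degree sequence, then $n \xi_p^{(n)}(k,r)$ counts the number of degree-$r$ vertices with exactly $k$ activated arms, and since each such vertex independently has $k$ activated arms with probability $[p]_k^r$, we have
\[
n\xi_p^{(n)}(k,r) \sim \Bin\bigl(n\mu^{(n)}(r),\, [p]_k^r\bigr),
\]
so $\E\xi_p^{(n)}(k,r) = \mu^{(n)}(r)[p]_k^r$ and $\Var(\xi_p^{(n)}(k,r)) \leq \mu^{(n)}(r)[p]_k^r/n$. By Markov's inequality, it suffices to bound the expectations of the two sums by $K\dl$ times the target probability bound; namely, for \eqref{eq:2moment} it is enough to show
\[
\E\Bigl[\sum_{r,k} r^2 \bigl|\xi_p^{(n)}(k,r) - \mu^{(n)}(r)[p]_k^r\bigr|\Bigr] \leq K\Bigl(a_n^{2+\eta} + \tfrac{1}{\sqrt{n a_n}}\Bigr),
\]
and analogously for \eqref{eq:3moment}, $\leq K\bigl(a_n^{2+\eta} + \tfrac{1}{\sqrt{n a_n^{3-\eta}}}\bigr)$. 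Both targets are controlled by $a_n \cdot (a_n^{1+\eta} + 1/\sqrt{na_n^3})$ and $(a_n^{1+\eta} + 1/\sqrt{na_n^3})$ respectively, since $a_n^{2+\eta} \leq a_n^{1+\eta}$ and $a_n^{\eta/2} \leq 1$.

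The plan is then a standard truncation at $R = \lceil 1/a_n \rceil$. For the head $r \leq R$, use $\E|\xi - \mu[p]| \leq \sqrt{\Var} \leq \sqrt{\mu^{(n)}(r)[p]_k^r / n}$. Summing over $k$ via Cauchy-Schwarz,
\[
\sum_{k=0}^r \sqrt{[p]_k^r} \leq \sqrt{r+1}\,\sqrt{\textstyle\sum_k [p]_k^r} = \sqrt{r+1},
\]
so the $r^i$-weighted head sum is at most $n^{-1/2}\sum_{r \leq R} r^{i+1/2}\sqrt{\mu^{(n)}(r)}$ (up to a constant). For $i = 2$, a Cauchy-Schwarz with the uniform weight $1$ together with the bound $\sum r^5 \mu^{(n)}(r) \leq M$ yields $\leq C\sqrt{MR/n}$; for $i = 3$, distributing the moment as $r^{7/2}\sqrt{\mu^{(n)}(r)} = \sqrt{r^{5+\eta}\mu^{(n)}(r)} \cdot r^{(2-\eta)/2}$ and applying Cauchy-Schwarz gives $\leq C\sqrt{M R^{3-\eta}/n}$.

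For the tail $r > R$, use the triangle inequality $\E|\xi - \mu[p]| \leq 2\mu^{(n)}(r)[p]_k^r$, so the sum over $k$ telescopes and leaves $2\mu^{(n)}(r)$; a Markov bound against the $(5+\eta)$-th moment gives
\[
\sum_{r > R} r^i \mu^{(n)}(r) \leq R^{-(5+\eta-i)}\sum_{r>R} r^{5+\eta}\mu^{(n)}(r) \leq M R^{-(5+\eta-i)}
\]
for $i = 2, 3$. Plugging $R = \lceil 1/a_n \rceil$ yields, for \eqref{eq:2moment}, a head contribution of order $1/\sqrt{na_n}$ and a tail contribution of order $a_n^{3+\eta}\leq a_n^{2+\eta}$; for \eqref{eq:3moment}, a head of order $1/\sqrt{n a_n^{3-\eta}} \leq 1/\sqrt{n a_n^3}$ and a tail of order $a_n^{2+\eta} \leq a_n^{1+\eta}$. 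Markov's inequality then produces the stated probability bounds.

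This is essentially a bookkeeping argument; the only mildly delicate point is the head estimate for the $r^3$ sum, where one must distribute the moment across Cauchy-Schwarz with the weight $r^{(2-\eta)/2}$ rather than a uniform weight, since $5+\eta$ moments barely suffice to control an $r^{7/2}$ weighting. The constants in the final bound depend only on $\dl$ and $M$, in accordance with the lemma statement, and are uniform in $p \in [0,1]$ because all the bounds above are.
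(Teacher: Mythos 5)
Your proof is correct and follows essentially the same route as the paper: recognize $n\xi_p^{(n)}(k,r)$ as marginally binomial, reduce to expectation bounds via Markov's inequality, split at $r \approx 1/a_n$, bound the head via the $L^1$–$L^2$ variance estimate and a double Cauchy–Schwarz (first over $k$, then over $r$), and bound the tail by the $(5+\eta)$-th moment. The only differences from the paper's write-up are cosmetic choices in the Cauchy–Schwarz weightings for the head: the paper pulls out the factor $r^{1/2} \leq a_n^{-1/2}$ (resp.\ $r^{3/2} \leq a_n^{-3/2}$) before pairing $r^{5+\eta}\mu^{(n)}$ against $r^{-(1+\eta)}$, whereas you pair $r^5\mu^{(n)}$ against the uniform weight (resp.\ $r^{5+\eta}\mu^{(n)}$ against $r^{2-\eta}$), landing at the same $O(1/\sqrt{na_n})$ and $O(1/\sqrt{na_n^3})$ bounds after the implicit $a_n \leq 1$ simplifications.
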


We delay the proof of this result to the end of this section, and shall first use it to prove Theorem \ref{th:JLimproved3}.

\begin{proof}[Proof of Theorem \ref{th:JLimproved3}]
Let us use the same notation $\pi\n$, $\fm_i\n$, $\g_n$, $\fG_n$, $\fF_n$ as in Theorem \ref{th:JLimproved2}. Note that these quantities are random, but conditionally on their values, Theorem \ref{th:JLimproved2} applies, provided the assumptions \eqref{eq:degreeass2} hold. We shall therefore start by checking them.

To begin with, the last assumption of \eqref{eq:degreeass2} is just the same as in \eqref{eq:degreeass3}. Now, from \eqref{eq:2moment} and \eqref{eq:3moment} with $a_n = \eps_n$, $p = p_n$ and $\dl$ small enough (to be fixed later), we have
\begin{equation} \label{eq:xin}
\sum_{r \geq 0} \sum_{k=0}^r r^2 \left | \xi\n(k,r) - \mu\n(r) [p_n]_k^r \right | \leq \dl \eps_n, \quad \sum_{k, r \geq 0}^r r^3 \left | \xi\n(k,r) - \mu\n(r) [p_n]_k^r \right | \leq \dl
\end{equation}
with probability at least
\[
1 - 2 K \left( \eps_n^{1+\eta} + \frac{1}{\sqrt{n \eps_n^3}} \right ).
\]
As usual, we will just write w.h.p. for such a probability, where $K$ depends only on the fixed parameters, and $C$ will be a constant which may change from line to line, but depends only on  the fixed parameters (but not $\dl$).

To begin with, the first equation of \eqref{eq:xin} implies that
\[
\fm_1\n = \sum_{r \geq 1} \sum_{k=1}^r k \xi\n(k,r),
\]
is, up to $\dl \eps_n$,
\[
\sum_{r \geq 1} \sum_{k=1}^r k \mu\n(r) [p_n]_k^r = \sum_{r \geq 1} p_n r \mu\n(r) = p_n m\n_1.
\]
Similar computations hold for the second and third factorial moments, and thus
\begin{equation} \label{eq:momentcomp}
\left | \fm_1\n - p_n m\n_1 \right | \leq C \dl \eps_n, \quad \left | \fm_2\n - p_n^2 m\n_2 \right | \leq C \dl \eps_n, \quad \left | \fm_3\n - p_n^3 m\n_3 \right | \leq C \dl.
\end{equation}
A couple easy Taylor expansions then ensure that the corresponding $\g_n$ verifies
\begin{equation} \label{eq:gammaeps}
\left | \g_n - \frac{(m\n_1)^2}{m\n_2} \eps_n \right | \leq C \dl \eps_n.
\end{equation}
The assumptions \eqref{eq:degreeass3} easily show that $(p_n)$ is uniformly bounded below by a positive constant, and it is then easy to deduce from \eqref{eq:momentcomp} and \eqref{eq:gammaeps} that the assumptions \eqref{eq:degreeass2} of Theorem \ref{th:JLimproved2} are verified, provided we chose $\dl$ small enough. Moreover \eqref{eq:gammaeps} shows that $\eps_n$ and $\g_n$ differ only by a multiplicative constant that is bounded above and below. Consequently, in the notations of Theorem \ref{th:JLimproved2}, w.h.p., we have
\[
\left ( 2 \frac{\fm_1\n}{\fm_3\n} - \dl \right ) n \g_n \leq |\cC_1(n,\xi\n)| \leq \left ( 2 \frac{\fm_1\n}{\fm_3\n} + \dl \right ) n \g_n, \quad |\cC_2(n,\xi\n)| \leq \dl n \g_n,
\]
and
\[
\left \| \frac{1}{n\g_n} \psi_n - \frac{2}{\fm_3\n} \fF_n \right \|_2 \leq \dl, \quad \frac1n w_n \leq \dl.
\]
This already proves the very last part of the result. On the other hand, using \eqref{eq:momentcomp} and \eqref{eq:gammaeps} allows to replace, up to a factor $C \dl \eps_n$, $\fm_i\n$ by $p_n m\n_i$ and $\g_n$ by $(m\n_1)^2/m\n_2 \eps_n$, and a couple lines of computation provide the first part of the statement. To conclude, notice that \eqref{eq:xin} ensures that
\[
\left \| \fG_n - \bar{\fG}_n \right \|_3 \leq \dl
\]
w.h.p. Hence, up to a factor $\dl$, we may replace, in the above equation, $\fF_n$ by
\begin{align*}
x \d{\bar{\fG}_n}{x}(x,y) & = p_n x y G_n'((p_n x + 1 - p_n)y) \\
& = \frac{m_1\n}{m_2\n} x y G_n' \left ( \left ( \frac{m\n_1}{m\n_2} x + 1 - \frac{m\n_1}{m\n_2} + \eps_n \frac{m\n_1}{m\n_2} (x-1) \right ) y \right ) + O(\eps_n) \\
& = \frac{m_1\n}{m_2\n} x y G_n'(q_n(x,y)) + O(\eps_n),
\end{align*}
where both lines are easy Taylor expansions and $O(\cdot)$ is uniform in the parameters. This readily allows to conclude.
\end{proof}

We finally just need to prove Lemma \ref{lem:momentcomp}.

\begin{proof}[Proof of Lemma \ref{lem:momentcomp}]
Let us prove the first inequality, the second being similar. First split the sum on $r \leq 1/a_n$ and $r > 1/a_n$. To deal with the second sum, just write
\[
\sum_{k=0}^r r^2 \left | \xi_p\n(k,r) - \mu\n(r) [p]_k^r \right | \leq r^2 \sum_{k=1}^r \xi_p\n(k,r) = r^2 \mu\n(r)
\]
to obtain
\begin{align*}
\E_{\Q} \left [ \sum_{r > 1/a_n} \sum_{k=0}^r r^2 \left | \xi_p\n(k,r) - \mu\n(r) [p]_k^r \right | \right ] & \leq \sum_{r > 1/a_n} r^2 \mu\n(r) \\
& \leq 2 a_n^{2+\dl} \sum_{r > 1/a_n} r^{4 + \eta} \mu\n(r) \\
& \leq 2 M a_n^{2+\dl},
\end{align*}
so that Markov's inequality yields
\begin{equation} \label{eq:thirdmoment}
\Q \left ( \sum_{r > 1/a_n} \sum_{k=0}^r r^2 \left | \xi_p\n(k,r) - \mu\n(r) [p]_k^r \right | > \frac{\dl}{2} a_n \right ) \leq \frac{4 M}{\dl} a_n^{1+\dl}.
\end{equation}

For the $r \leq 1/a_n$ term, first note that, for each $r$, the vector $(n \xi_p\n(k,r))_{0 \leq k \leq r}$ is multinomial with parameters $n \mu\n(r)$ and $([p]_k^r)_{0 \leq k \leq r}$. In particular, $n \xi_p\n(k,r)$ has a binomial distribution with parameters $n \mu\n(r)$ and $[p]_k^r$, so that Chebychev's inequality yields
\[
\E_{\Q} \left [ \left | n \xi_p\n(k,r) - n \mu\n(r) [p]_k^r \right | \right ] \leq \sqrt{n \mu\n(r) [p]_k^r (1-[p]_k^r)} \leq \sqrt{n \mu\n(r) [p]_k^r}.
\]
We can then compute, for a constant $C$ depending only on $M$ and $\eta$, that
\begin{align*}
\E_{\Q} \left [ \sum_{r \leq 1/a_n} \sum_{k=0}^r r^2 \left | \xi_p\n(k,r) - \mu\n(r) [p]_k^r \right | \right ] & \leq \frac1n \sum_{r \leq 1/a_n} r^2 \sum_{k=0}^r \E_{\Q} \left [ \left | n \xi_p\n(k,r) - n \mu\n(r) [p]_k^r \right | \right ] \\
& \leq \frac{1}{\sqrt{n}} \sum_{r \leq 1/a_n} r^2 \sum_{k=0}^r \sqrt{\mu\n(r) [p]_k^r} \\
& \leq \frac{1}{\sqrt{n}} \sum_{r \leq 1/a_n} r^2 \sqrt{\mu\n(r)} (\sum_{k=0}^r 1 )^{1/2} (\sum_{k=0}^r [p]_k^r)^{1/2} \\
& \leq C \frac{1}{\sqrt{n}} \sum_{r \leq 1/a_n} r^{5/2} \sqrt{\mu\n(r)} \\
& \leq C \frac{1}{\sqrt{n a_n}} \sum_{r \leq 1/a_n} r^2 \sqrt{\mu\n(r)} \\
& \leq C \frac{1}{\sqrt{n a_n}} \left ( \sum_{r \leq 1/a_n} r^{5+\eta} \mu\n(r) \right )^{1/2} \left ( \sum_{r \leq 1/a_n} r^{-(1+\eta)} \right )^{1/2} \\
& \leq C \frac{1}{\sqrt{n a_n}},
\end{align*}
where we use Cauchy-Schwarz twice. We conclude with Markov's inequality that
\[
\Q \left ( \sum_{r \leq 1/a_n} \sum_{k=0}^r r^2 \left | \xi_p\n(k,r) - \mu\n(r) [p]_k^r \right | > \frac{\dl}{2} a_n \right ) \leq \frac{2 C}{\dl} \frac{1}{\sqrt{n a_n^3}}.
\]
what, along with \eqref{eq:thirdmoment}, provides the result. The proof of the second statement is similar, by merely writing $r^{7/2} \leq a_n^{-3/2} r^2$ in the second step of the reasoning.
\end{proof}

\section{Gelation in a dynamic configuration model} \label{sec:gelDCM}

\subsection{Introduction}

As already explained in Section \ref{sec:firstgel}, we can compare our model to a dynamic CM, at least until the first gelation time. More generally, we will introduce in the next section an alternative model that will allow us to compare our model to a DCM at any time, at least for the particles in solution. In particular, we will have to consider an arbitrary number $n$ of particles with a degree distribution $\mu$, where $n \leq N$ and $\mu$ can be any (reasonable) distribution. However, the threshold for the size of the components that fall into the gel remains unchanged, still $\a(N)$.

So let us consider a DCM described as follows. Consider $n$ vertices, whose degree distribution is given by $\mu$, and let $\th = (n, \mu)$. Set i.i.d. clocks on the arms with distribution $\cE(1)$, and link the first arm that rings with the second one, the third with the fourth, and so on. This therefore provides a graph process $(\cG_t(\th))$. If, at some time, there is only one activated arm, we assume that it does not matter in the graph structure. As usual, we define $m^{\mu}_i$ as the factorial moments of $\mu$, $G_{\mu}$ as its generating function, and we introduce
\[
q_{\mu}(x,y) = \left ( \frac{m^{\mu}_1}{m^{\mu}_2} x + 1 - \frac{m^{\mu}_1}{m^{\mu}_2} \right ) y, \quad F_{\mu}(y) = y G'_{\mu}(y).
\]

As in the previous section, we will take constants $c, m, M \in (0,\pinf)$, and we will assume that $\th \in \K\N_{c,m,M}$, which was defined in Section \ref{sec:prelim}. We will then obtain results with a precision $\k > 0$. The results will only depend on these quantities, and we classically write $K = K(c,m,M,\k)$ to mean that $K$ is a constant that depends only on $c$, $m$, $M$ and $\k$. Our main assumption on $(\a(N))$ will be Assumption \ref{as:alpha}, and this will remain fixed, so we will not include it as a parameter. We will write
\begin{equation} \label{eq:pN}
p_N = \frac{N}{\a(N)^{3/2}} + \left ( \frac{\a(N)}{N} \right )^{1+\eta}
\end{equation}
and all our results will be obtained with probability at least $1 - K p_N$, what we will shorten as w.h.p. in the proofs.

Because $\th \in \K\N_{c,m,M}$, the generating functions $G_{\mu}$ have a fifth moment that is uniformly bounded above. In particular, if we do a Taylor expansion of $G_{\mu}$ (up to the fourth order), the constants hidden in the $O(\cdot)$ can be chosen uniformly in all the fixed parameters, and similarly for expansions of $G_{\mu}'$ up to the third order, and so on. Similarly, the bounds on the $m^{\mu}_i$ will readily ensure that all the expressions involving the $m^{\mu}_i$ that we will consider are uniformly bounded above. We will implicitly always consider $N$ large enough such that everything that we write makes sense. Specifically, since we assume \eqref{eq:alpha}, then any expression that we will consider multiplied by $\a(N)/N$ will be small, for large enough $N$. For the same reason, this $N$ can be chosen uniform, and we can change the probabilities in the proofs to make the statements trivial for these small $N$. We are then able to perform Taylor expansions at the first order in $\a(N)/N$ (or $\a(N)/n$), and whatever constant hidden inside the $O(\cdot)$ will depend only on the fixed parameters $c,m,M$.

\subsection{Evolution of the random graph}

Before diving into the heart of our problem, let us study the activated arms in the graph $\cG_t(\th)$. At each time $t$, each arm is activated independently with probability $p_t = 1-e^{-t}$. The law of large numbers then readily tells approximately how many arms are activated on the vertices of degree $r$, how many activated arms there are in total, and so on. To be precise, we will be interested in the following quantities.
\begin{itemize}
\item $\fV_{k,r}(\th,t)$ is the number of $(k,r)$ vertices at time $t$;
\item $\fG_{\th}(t)$ is its generating function;
\item $\fB_{\th}(t) = \d{\fG\n_{\th}(t)}{x}(1,1)$ is the number of activated arms at time $t$;
\item $\vs(\th,B)$ is the first time $t$ when $\fB_{\th}(t) = B$.
\end{itemize}
Finally, we let $q_t(x,y) = (p_t x + 1 - p_t) y$.

\begin{lemma} \label{lem:charactgraph}
Assume that \eqref{eq:alpha} holds, and consider $\th = (n,\mu) \in \K\N_{c,m,M}$ and $\dl > 0$. Then, for some constant $K = K(c,m,M,\k)$, with probability at least $1 - K p_N$, it holds that
\[
\left \| \frac1n \fG_{\th}(t) - G_{\mu}(q_t)  \right \|_2 \leq \dl \frac{\a(N)}{N}
\]
and
\[
\left | \frac1n \fB_{\th}(t) - p_t m^{\mu}_1 \right | \leq  \dl \frac{\a(N)}{N}.
\]
In particular, for any $B \in [0, (1-m) m_1^{\mu} n]$,
\[
\left | \vs(\th,B) + \log \left ( 1 - \frac{B}{n m_1^{\mu}}\right ) \right | \leq \dl \frac{\a(N)}{N}.
\]
\end{lemma}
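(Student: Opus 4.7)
The proof is a quantitative law-of-large-numbers based on Hoeffding's inequality combined with a mesh argument. The starting point is the representation
\[
\fG_\theta(t)(x,y) = \sum_v x^{A_v(t)} y^{r_v}, \qquad \fB_\theta(t) = \sum_a \mathbf{1}_{\{e_a \le t\}},
\]
where the sums range over vertices $v$ (of degree $r_v$ and with $A_v(t)$ activated arms at time $t$) and arms $a$ (carrying i.i.d.\ $\cE(1)$ clocks $e_a$). Since $A_v(t) \sim \Bin(r_v, p_t)$, one has $\E[\fG_\theta(t)(x,y)] = n G_\mu(q_t(x,y))$ and $\E[\fB_\theta(t)] = n p_t m_1^\mu$.

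At any fixed $(x,y,t) \in [0,1]^2 \times [0,T]$, where $T = -\log m + 1$ covers the relevant range, the partial derivative $\partial_x^i \partial_y^j \fG_\theta(t)(x,y) = \sum_v (A_v(t))_i (r_v)_j x^{A_v(t)-i} y^{r_v-j}$ is a sum of independent summands each bounded in absolute value by $r_v^{i+j}$. For $i+j \le 2$, the sum of squared bounds is at most $\sum_v r_v^4 \le C n$ thanks to the $5+\eta$-moment condition built into $\K\N_{c,m,M}$, so Hoeffding's inequality yields
\[
\P\!\left(\left|\partial_x^i\partial_y^j\fG_\theta(t)(x,y) - n\, \partial_x^i\partial_y^j G_\mu(q_t(x,y))\right| > \delta n\tfrac{\a(N)}{N}\right) \le 2\exp\!\left(-c\delta^2 \tfrac{\a(N)^2}{N}\right),
\]
which is at most $2\exp(-c\delta^2 N^{3/5})$ under Assumption \ref{as:alpha}. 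The analogous bound for $\fB_\theta(t) - n p_t m_1^\mu$ is immediate.

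To lift pointwise concentration to the uniform bounds claimed, I would use a standard grid argument. Deterministically, $\|D^3(\fG_\theta(t)/n)\|_\infty$ and $\|D^3 G_\mu(q_t)\|_\infty$ are controlled by $m_3^\mu \le M$, so the second $(x,y)$-derivatives of $\frac{1}{n}\fG_\theta(t) - G_\mu(q_t)$ are $C$-Lipschitz on $[0,1]^2$; a grid of mesh $\a(N)/N$ has $(N/\a(N))^2$ nodes and Lipschitz interpolation incurs only an $O(\a(N)/N)$ error on each of the relevant $(i,j)$. For uniformity in $t \in [0,T]$, I would take a time-grid of spacing $\a(N)/N$ and bound the oscillation of $\partial_x^i\partial_y^j\fG_\theta$ on each subinterval $[t_i, t_{i+1}]$ by its total variation $\sum_v r_v^{i+j}\cdot(\#\{\text{arm activations at } v \text{ in } [t_i,t_{i+1}]\})$, itself a sum of scaled independent Bernoullis of mean $O(n\a(N)/N)$; a second application of Hoeffding gives exponentially small failure probability. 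Union-bounding over $\mathrm{poly}(N/\a(N))$ grid nodes leaves a total failure probability of order $\mathrm{poly}(N) \exp(-cN^{3/5})$, easily absorbed into $K p_N$.

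The last statement follows by inversion. Since $t \mapsto n p_t m_1^\mu$ is strictly increasing with derivative $nm_1^\mu e^{-t}$ bounded below by a positive constant on $[0, -\log m]$ (which, by the constraint $B \le (1-m)n m_1^\mu$, contains the exact value $-\log(1-B/(nm_1^\mu))$), the uniform bound $|\fB_\theta(t) - n p_t m_1^\mu| \le \delta n\a(N)/N$ translates, upon inverting $\fB_\theta(\vs(\theta,B)) = B$, into the claimed bound on $|\vs(\theta,B) + \log(1 - B/(nm_1^\mu))|$ after adjusting $\delta$. The main technical obstacle is the uniform-in-$t$ step for the derivatives $\partial_x^i\partial_y^j \fG_\theta$, which, unlike $\fG_\theta$ itself, are not monotone in $t$; the Hoeffding-on-the-total-variation argument handles this cleanly provided one exploits the $5+\eta$-moment bound in $\K\N_{c,m,M}$ to control $\sum_v r_v^5$.
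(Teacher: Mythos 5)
Your proof is correct in its essentials, but it takes a genuinely different and substantially heavier route than the paper's. The paper proves this lemma in two lines: it applies Lemma \ref{lem:momentcomp} (already established via Chebyshev's inequality for the multinomial vector $(\fV_{k,r}(\th,t))_k$, combined with a tail split at $r=1/a_n$ and Markov) with $a_n = \a(N)/N$ and $p = p_t$, obtaining
\[
\sum_{r \geq 1}\sum_{k=0}^r r^2 \left|\tfrac1n \fV_{k,r}(\th,t) - \mu(r)[p_t]_k^r\right| \leq \dl \frac{\a(N)}{N}
\]
with probability at least $1 - Kp_N$ at a single fixed $t$; the three displayed claims then drop out by reading off the bivariate generating function and its low-order derivatives (and, for the third, a single first-order inversion using $B/(nm_1^\mu)\leq 1-m$). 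Your proof instead re-derives the concentration from scratch via Hoeffding plus two nested mesh arguments (in $(x,y)$ and in $t$). This is sound and in fact delivers more than the lemma asks for --- exponential concentration of order $\exp(-c\a(N)^2/N)$, simultaneous over all $(x,y,t)$ --- whereas the statement only claims failure probability $Kp_N$, which is a polynomial rate that a second-moment bound already gives at a fixed $t$ without any union bound or time mesh. The paper exploits this fact and simply re-uses the shared infrastructure; your version re-proves it, more strongly, from first principles. Two small technical points to tighten if you were to write it out: the mesh spacings must be $\dl'\,\a(N)/N$ for a sufficiently small constant $\dl'$ depending on $\dl$ and the relevant Lipschitz constants (e.g.\ that of $t\mapsto \partial^i_x\partial^j_y G_\mu(q_t)$), not $\a(N)/N$ itself, since otherwise the deterministic interpolation error is only $O(\a(N)/N)$, not $\leq\dl\,\a(N)/N$; and in the time-oscillation step the sum should indeed be indexed by arms (as your ``scaled Bernoullis'' phrasing suggests), so that the relevant sum of squared bounds is $\sum_v r_v^{2(i+j)+1}\leq CnM$ for $i+j\leq 2$, which is precisely where the $5+\eta$ moment from $\K\N_{c,m,M}$ is needed --- a vertex-indexed decomposition would instead require a $6$th moment, which is not assumed.
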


Note that the last quantity is only relevant when $B \in [0,n m_1^{\mu}]$, and we need to assume furthermore that $B/(n m_1^{\mu})$ is bounded away from 1 to allow uniform Taylor expansions. 

\begin{proof}
This is again Lemma \ref{lem:momentcomp} with $a_n = \a(N)/N$, and $p = p_t$. It implies that
\[
\sum_{r \geq 1} \sum_{k=0}^r r^2 \left | \frac1n \fV_{k,r}(\th,t) - \mu\n(r) [p_t]_k^r \right | \leq \dl \frac{\a(N)}{N}
\]
w.h.p. The first part is then a simple computation and the second and third part follow easily.
\end{proof}

\subsection{Gelation time and properties of the gel}

The following result concerns the time of the appearance of the large cluster, and its structure. It will be our main tool to prove precise estimates on the gelation times in our model, and on the evolution of the characteristics of the particles remaining in solution. We consider the following quantities.
\begin{itemize}
\item $\s\N(\th)$ is the time when a large component, i.e. a component of size $\geq \a(N)$ appears;
\item $s\N(\th)$ is the size of that large component;
\item $v_{k,r}\N(\th)$ is the number of $(k,r)$ vertices in that large component, and $\psi\N_{\th}$ its generating function;
\item $\phi\N_{\th} = \psi\N_{\th}(1,\cdot)$ is the generating function of the number of vertices with degree $r$ in that large component;
\item $b\N(\th) = \d{\psi_{\th}\N}{x}(1,1)$ is the number of activated arms in that large component;
\item $w\N(\th) = \sum_{k,r \geq 0} r^3 v_{k,r}\N(\th)$.
\end{itemize}

\begin{prop} \label{prop:atgel}
Assume that \eqref{eq:alpha} holds, and consider $\th = (n,\mu) \in \K\N_{c,m,M}$ and $\k > 0$. Then, for some constant $K = K(c,m,M,\k)$, with probability at least $1 - K p_N$, the following statements hold.
\begin{enumerate}
\item The gelation time verifies
\[
\left | \s\N(\th) + \log \left ( 1 - \frac{m^{\mu}_1}{m^{\mu}_2} \right ) - \frac{\a(N)}{n} \frac{m^{\mu}_3}{2 m^{\mu}_2 (m^{\mu}_2 - m^{\mu}_1)} \right | \leq \k \frac{\a(N)}{N}.
\]
\item In particular, the characteristics of the particles at this time is described by
\[
\left \| \frac1n \fG_{\th}(\s\N(\th)) - G_{\mu}(q_{\mu}) - \frac{\a(N)}{n} \frac{m^{\mu}_3}{2 (m^{\mu}_2)^2} (x-1) y G_{\mu}'(q_{\mu}) \right \| \leq \k \frac{\a(N)}{N},
\]
and the total number of activated arms verifies
\[
\left | \frac1n \fB_{\th}(\s\N(\th)) - \frac{(m^{\mu}_1)^2}{m^{\mu}_2} - \frac{\a(N)}{n} \frac{m^{\mu}_1 m^{\mu}_3}{2 (m^{\mu}_2)^2} \right | \leq \k \frac{\a(N)}{N}.
\]
\item The size of the large component enjoys
\[
\a(N) \leq s\N(\th) \leq (1 + \k) \a(N). 
\]
\item The structure of the particles in the large component is described by
\[
\left \| \frac{1}{\a(N)} \psi\N_{\th} - \frac{1}{m^{\mu}_1} x y G_{\mu}'(q_{\mu}) \right \|_2 \leq \k.
\]
In particular,
\[
\left \| \frac{1}{\a(N)} \phi\N_{\th} - \frac{1}{m^{\mu}_1} F_{\mu} \right \|_2 \leq \k
\]
and
\[
\left | \frac{1}{\a(N)} b\N(\th) - 2 \right | \leq \k.
\]
\item Finally
\[
\left | \frac1n w\N(\th) \right | \leq \k.
\]
\end{enumerate}
\end{prop}

\begin{proof}
Let
\[
\eps_n^- = \frac{m}{4 M^2} \frac{1}{n} \inf_{n \leq k \leq n/c} \a(k), \quad \eps_n^+ = \frac{M}{m^2} \frac{1}{n} \sup_{n \leq k \leq n/c} \a(k).
\]
In the following, we shall consider $\eps_n$ of the form
\[
\eps_n = \eps_{n,\pm} = \frac{\a(N)}{n} \frac{m^{\mu}_3}{2m^{\mu}_1m^{\mu}_2} (1 \pm \k)
\]
where we can assume $\k \in (0,1/2)$. According to the moment assumptions, $\eps_n^- \leq \eps_n \leq \eps_n^+$. Moreover, since $(\a(N))$ is such that \eqref{eq:alpha} holds, then $(\eps_n^{\pm})$ verifies Assumptions \eqref{eq:assepsnpm}. As usual, $C$ will be a constant that might change from line to line but only depends on the fixed parameters, and we let $\dl = \k / (2C)$ and
\[
\s\N_{\pm}(\th) = - \log \left ( 1 - \frac{m^{\mu}_1}{m^{\mu}_2}\right ) - \log \left ( 1 - (1 \pm \dl) \frac{\a(N)}{n} \frac{m^{\mu}_3}{2 m^{\mu}_2 (m^{\mu}_2 - m^{\mu}_1)} \right ).
\]
We may then take $K$ as in Theorem \ref{th:JLimproved3}, which depends only on $\dl$, $\eta$, $m$, $M$ and $(\eps_n^{\pm})$, and thus only on $\k$, $\eta$, $m$, $M$, $c$ and $(\a(N))$. All the conclusions of the corollary will thus hold with probability at least
\[
1 - K \left ( \frac{1}{\sqrt{n \eps_n^3}} + \eps_n^{1+\eta} \right ),
\]
and thus w.h.p. 

\begin{enumerate}[fullwidth]
\item Denote
\[
p^{\pm}_n = p_{\s\N_{\pm}(\th)} = \frac{m^{\mu}_1}{m^{\mu}_2} (1 + \eps_{n,\pm}).
\]
Now, note that at any time, $\cG_t(\th)$ is exactly a $CM(n,\mu,p_t)$ graph. Therefore, $\s\N(\th) \geq \s\N_+(\th)$ means that $CM(n,\mu,p_n^+)$ has no large component. But Theorem \ref{th:JLimproved3} implies that, w.h.p., 
\begin{align*}
\left | \cC_1(n,\mu,p_n^+) \right | & \geq \left ( 2 \frac{m^{\mu}_1}{m^{\mu}_2 m^{\mu}_3} - \dl \right ) n \frac{\a(N)}{n} \frac{m^{\mu}_3}{2 m^{\mu}_1 m^{\mu}_2} (1 + \k) \\
& = \a(N) \left ( 1 + \k - \dl (1 + \k) \frac{m^{\mu}_3}{2 m^{\mu}_1 m^{\mu}_2} \right ) \\
& \geq \a(N) \left ( 1 + \k - C \dl \right ) \\
& = \a(N) ( 1 + \k / 2).
\end{align*}
Hence, $\s\N(\th) \leq \s\N_+(\th)$ w.h.p., and similarly, $\s\N(\th) \geq \s\N_-(\th)$ w.h.p. Then, an easy Taylor expansion allows to write
\[
\s\N_{\pm}(\th) = - \log \left ( 1 - \frac{m^{\mu}_1}{m^{\mu}_2}\right ) + (1 \pm \dl) \frac{\a(N)}{n} \frac{m^{\mu}_3}{2 m^{\mu}_2 (m^{\mu}_2 - m^{\mu}_1)} + O \left ( \frac{\a(N)}{N} \right )^2,
\]
and Point 1 of the result follows.

\item Since $\fG_{\th}$ is increasing in $t$, from the point above, w.h.p.
\[
\fG_{\th}(\s\N_-(\th)) \leq \fG_{\th}(\s\N(\th)) \leq \fG_{\th}(\s\N_+(\th)).
\]
On the other hand, by Lemma \ref{lem:charactgraph}, w.h.p.
\[
\frac1n \fG_{\th}(\s\N_+(\th)) \leq G_{\mu}(q_{\s\N_+(\th)}) + \k \frac{\a(N)}{N}.
\]
But a simple Taylor expansion shows that
\[
q_{\s\N_+(\th)}(x,y) = q_{\mu}(x,y) + \frac{\a(N)}{n} \frac{m^{\mu}_3}{2 (m^{\mu}_2)^2} (x - 1) y + \k \: O \left ( \frac{\a(N)}{N} \right ).
\]
Another Taylor expansion ensures that
\[
G_{\mu}(q_{\s\N_+(\th)}(x,y)) = G_{\mu}(q_{\mu}(x,y)) + \frac{\a(N)}{n} \frac{m^{\mu}_3}{2 (m^{\mu}_2)^2} (x - 1) y G_{\mu}' (q_{\mu}(x,y)) + \k \: O \left ( \frac{\a(N)}{N} \right ).
\]
But these computations actualy hold in the $\| \cdot \|_2$ sense, as well as for $\fG_{\th}(\s\N_-(\th))$, and the result for $\fG_{\th}(\s\N(\th))$ follows. The second part is then a simple computation.

\item Note that the size of the largest component in $\cG_t(\th)$ is increasing with $t$. But we just saw that w.h.p., $\s\N(\th) \leq \s\N_+(\th)$, so that
\[
\s\N(\th) \leq \left | \cC_1(n,\mu,p_n^+) \right |
\]
w.h.p. A similar computation as in Point 1 above ensures that this size is bounded by $1+\k + C\dl \leq 1 + 2 \k$ w.h.p.

\item Denote by $\psi_{\th}^{\pm}$ the generating function of $(v_{k,r} ( \cC_1(n,\mu,p_n^{\pm})))$. Theorem \ref{th:JLimproved3} implies that w.h.p.,
\[
\left \| \frac{1}{n \eps_{n,+}} \psi_{\th}^+ - 2 \frac{m^{\mu}_2}{m^{\mu}_3} xy G_{\mu}'(q_{\mu}) \right \|_2 \leq \dl.
\]
Rearranging, it is easy to see that
\[
\left \| \frac{1}{\a(N)} \psi_{\th}^+ - \frac{1}{m^{\mu}_1} xy G_{\mu}'(q_{\mu}) \right \|_2 \leq C \dl \leq \k.
\]
A similar inequality holds for $\psi_{\th}^-$. But
\[
v_{k,r} \left ( \cC_1(n,\mu,p_n^-) \right ) \leq v_{k,r}\N(\th) \leq v_{k,r} \left ( \cC_1(n,\mu,p_n^+) \right )
\]
w.h.p., so that
\[
\left \| \frac{1}{\a(N)} \psi\N_{\th} - \frac{1}{m^{\mu}_1} xy G_{\mu}'(q_{\mu}) \right \|_2 \leq \k.
\]
The other formulas are obtained after easy computations.

\item Finally, for the same reason,
\[
\frac1nw\N(\th) \leq \frac1n \sum_{k,r \geq 0} r^3 v_{k,r}(\cC_1(n,\mu,p_n^+)) \leq \k
\]
w.h.p., and the proof is over. \qedhere
\end{enumerate}
\end{proof}

\subsection{After gelation}

The previous result precisely describes when gelation occurs and what happens for the gel at this time. It therefore allows us to explain what remains \emph{in solution} after a gelation event. Let us then start by defining the following quantities. Note that the notation is the same as above, but with capital letters.
\begin{itemize}
\item $S\N(\th) = n - s\N(\th)$ is the number of vertices remaining in solution at $\s\n$;
\item $V_{k,r}\N(\th) = \fV_{k,r}(\th,\s\N(\th)) - v_{k,r}\N(\th)$ is the number of $(k,r)$ vertices in solution at $s\N(\th)$, and $\Psi\N_{\th}$ is its generating function;
\item $\Phi\N_{\th} = \Psi\N_{\th}(1,\cdot)$ is the generating function of the number of vertices with degree $r$ remaining in solution at $\s\n(\th)$;
\item $B\N(\th) = \d{\Psi\N_{\th}}{x}(1,1)$ is the number of activated arms remaining in solution at $\s\n(\th)$.
\end{itemize}
We then have the following result, which is a direct consequence of Lemma \ref{lem:charactgraph} and Proposition \ref{prop:atgel}.

\begin{lemma} \label{lem:aftergel}
Assume that \eqref{eq:alpha} holds, and consider $\th = (n,\mu) \in \K\N_{c,m,M}$ and $\k > 0$. Then, for some constant $K = K(c,m,M,\k)$, with probability at least $1 - K p_N$, the following statements hold.
\begin{enumerate}
\item The number of particle remaining in solution enjoys
\[
n - (1 + \k) \a(N) \leq S\N(\th) \leq n - \a(N). 
\]
\item The structure of the particles remaining in solution is described by
\[
\begin{split}
\left \| \frac1n \Psi\N(\th) - G_{\mu}(q_{\mu}) - \frac{\a(N)}{n} \left ( \frac{m^{\mu}_3}{2 (m^{\mu}_2)^2} (x-1)y G_{\mu}'(q_{\mu}) \right. \right. & \left. \left. \vphantom{\frac1n \Psi\N(\th) - G_{\mu}(q_{\mu}) - \frac{\a(N)}{n} \frac{m^{\mu}_3}{2 (m^{\mu}_2)^2} (x-1)y G_{\mu}'(q_{\mu})} - \frac{1}{m^{\mu}_1} xy G_{\mu}'(q_{\mu}) \right ) \right \|_2 \\
& \leq \k \frac{\a(N)}{N}.
\end{split}
\]
\item In particular,
\[
\left \| \frac1n \Phi\N(\th) - G_{\mu} + \frac{\a(N)}{n} \frac{1}{m^{\mu}_1} F_{\mu} \right \|_2 \leq \k \frac{\a(N)}{N}
\]
and
\[
\left | \frac1n B\N(\th) - \frac{(m^{\mu}_1)^2}{m^{\mu}_2} - \frac{\a(N)}{n} \left ( \frac{m^{\mu}_1 m^{\mu}_3}{2 (m^{\mu}_2)^2} - 2 \right ) \right | \leq \k \frac{\a(N)}{N}.
\]
\end{enumerate}
\end{lemma}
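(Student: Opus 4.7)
The plan is to obtain Lemma \ref{lem:aftergel} by straightforward subtraction from Proposition \ref{prop:atgel}, via the defining identities $S\N(\th) = n - s\N(\th)$ and $V_{k,r}\N(\th) = \fV_{k,r}(\th,\s\N(\th)) - v_{k,r}\N(\th)$, the latter giving, after multiplication by $x^k y^r$ and summation,
\[
\Psi\N_{\th} = \fG_{\th}(\s\N(\th)) - \psi\N_{\th}.
\]
I would first apply Proposition \ref{prop:atgel} with precision parameter $\k' = \k/C$ for a constant $C = C(c,m,M)$ to be fixed at the end, so that all five of its conclusions hold simultaneously on a single event of probability at least $1 - K p_N$; throughout the remainder of the argument we work on that event. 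Part 1 of the lemma is then immediate from Part 3 of Proposition \ref{prop:atgel}.

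For Part 2, divide the identity above by $n$ and invoke Parts 2 and 4 of Proposition \ref{prop:atgel}, the former upgraded to the $\|\cdot\|_2$ norm (which is in fact what its proof via Lemma \ref{lem:charactgraph} delivers). The estimate on $\fG_{\th}(\s\N(\th))/n$ contributes the leading term $G_{\mu}(q_{\mu})$ together with the correction $\frac{\a(N)}{n}\frac{m^{\mu}_3}{2(m^{\mu}_2)^2}(x-1) y G'_{\mu}(q_{\mu})$, with $\|\cdot\|_2$-error of order $\k' \a(N)/N$; the estimate on $\psi\N_{\th}$ contributes, after dividing by $n$, the subtracted term $\frac{\a(N)}{n}\frac{1}{m^{\mu}_1} x y G'_{\mu}(q_{\mu})$, with $\|\cdot\|_2$-error of order $\k' \a(N)/n$. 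Since $\th \in \K\N_{c,m,M}$ forces $n \geq cN$, both error scales collapse into $O(\k' \a(N)/N)$; choosing $C$ large enough to absorb the constants yields the advertised bound $\k \a(N)/N$.

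Part 3 is obtained by specializing Part 2. For $\Phi\N_{\th}(y) = \Psi\N_{\th}(1,y)$, setting $x = 1$ annihilates the $(x-1)$-term, and since $q_{\mu}(1,y) = y$ the remaining $x y G'_{\mu}(q_{\mu})\big|_{x=1}$ equals $y G'_{\mu}(y) = F_{\mu}(y)$, giving the stated estimate; the $\|\cdot\|_2$ norm in $(x,y)$ clearly restricts to the $\|\cdot\|_2$ norm in $y$. For $B\N(\th) = \partial_x \Psi\N_{\th}\big|_{(1,1)}$, we use that the $\|\cdot\|_2$ norm controls the values of first-order partial derivatives; applying $\partial_x|_{(1,1)}$ to the Part 2 expansion, together with $\partial_x q_{\mu}|_{(1,1)} = m^{\mu}_1/m^{\mu}_2$, $G_{\mu}'(1) = m^{\mu}_1$ and $G_{\mu}''(1) = m^{\mu}_2$, gives
\[
\partial_x G_{\mu}(q_{\mu})\big|_{(1,1)} = \tfrac{(m^{\mu}_1)^2}{m^{\mu}_2}, \quad \partial_x\bigl[(x-1) y G'_{\mu}(q_{\mu})\bigr]\big|_{(1,1)} = m^{\mu}_1, \quad \partial_x\bigl[x y G'_{\mu}(q_{\mu})\bigr]\big|_{(1,1)} = 2 m^{\mu}_1,
\]
which recombine to exactly the stated formula for $B\N(\th)/n$.

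The proof poses essentially no obstacle beyond Proposition \ref{prop:atgel}; the only point worth monitoring is the compatibility of the two error scales $\a(N)/N$ and $\a(N)/n$ coming from Parts 2 and 4 respectively, and this is resolved at the cost of an absolute multiplicative constant by the uniform lower bound $n \geq cN$ built into $\K\N_{c,m,M}$.
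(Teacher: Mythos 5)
Your proof is correct and takes exactly the approach the paper intends: the paper itself states Lemma \ref{lem:aftergel} is ``a direct consequence of Lemma \ref{lem:charactgraph} and Proposition \ref{prop:atgel}'' and offers no explicit argument, and your subtraction via $\Psi\N_{\th} = \fG_{\th}(\s\N(\th)) - \psi\N_{\th}$ and $S\N(\th)=n-s\N(\th)$ is the intended route. You are also right to flag that Part 2 of Proposition \ref{prop:atgel} is stated only in $\|\cdot\|_0$ but that its proof (``these computations actually hold in the $\|\cdot\|_2$ sense'') delivers the stronger $\|\cdot\|_2$ bound needed here, and to reconcile the $\a(N)/n$ and $\a(N)/N$ error scales via $n \geq cN$; the derivative evaluations at $(1,1)$ for $B\N(\th)$ are all correct.
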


\subsection{Properties of the new model}

After gelation, we are left with particles in solution described by the above result. In particular, there are $S\N(\th)$ of them, and their total degree is described by $\Psi\N(\th)$. It thus makes sense to ask what happens for a configuration model with these parameters. It will turn out to be especially useful when we consider the alternative model in the next section. To be precise, we consider $S\N(\th)$ particles, with degree distribution $\bar{\mu}\N(\th)$, which has generating function $\bar{G}\N_{\th} = \Phi\N(\th)/S\N(\th)$. We write $\bar{\th}\N = (S\N(\th),\bar{\mu}\N(\th))$. The factorial moments of $\bar{\mu}\N(\th)$ are written $\bar{m}\N_i(\th)$, and we denote the time when a giant component appears in $\cG(\bar{\th}\N)$ as $\bar{\s}\N(\th)$. We can then prove the following result.

\begin{lemma} \label{lem:mubar}
Assume that \eqref{eq:alpha} holds, and consider $\th = (n,\mu) \in \K\N_{c,m,M}$ and $\k > 0$. Then, for some constant $K = K(c,m,M,\k)$, with probability at least $1 - K p_N$, the following statements hold.
\begin{enumerate}
\item The generating function of $\bar{\mu}\N(\th)$ verifies
\[
\left \| \bar{G}\N_{\th} - G_{\mu} - \frac{\a(N)}{n} \left ( G_{\mu} - \frac{1}{m^{\mu}_1} F_{\mu} \right ) \right \|_2 \leq \k \frac{\a(N)}{N}.
\]
\item The factorial moments of $\bar{\mu}\N(\th)$ enjoy
\[
\left | \bar{m}\N_1(\th) - m^{\mu}_1 - \frac{\a(N)}{n} \left ( m^{\mu}_1 -  \frac{m^{\mu}_1 + m^{\mu}_2}{m^{\mu}_1} \right ) \right | \leq \k \frac{\a(N)}{N},
\]
and
\[
\left | \bar{m}\N_2(\th) - m^{\mu}_2 - \frac{\a(N)}{n} \left ( m^{\mu}_2 - \frac{ 2 m^{\mu}_2 + m^{\mu}_3}{m^{\mu}_1} \right ) \right | \leq \k \frac{\a(N)}{N},
\]
and
\[
\left | \bar{m}\N_3(\th) - m^{\mu}_3 \right | \leq \k.
\]
\item The time when a giant component appears in $\cG(\bar{\th}\N)$ verifies
\[
\left | \bar{\s}\N(\th) + \log \left ( 1 - \frac{m^{\mu}_1}{m^{\mu}_2} \right ) - \frac{\a(N)}{n} \left ( \frac{3m^{\mu}_3}{2m^{\mu}_2 ( m^{\mu}_2 - m^{\mu}_1) }- \frac{1}{m^{\mu}_1} \right ) \right | \leq \k \frac{\a(N)}{N}.
\]

\end{enumerate}
\end{lemma}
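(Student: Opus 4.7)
All three parts follow from combining Lemma \ref{lem:aftergel} and Proposition \ref{prop:atgel}, working on the intersection of their high-probability events and absorbing numerical constants via the standard trick of invoking these results with $\k/C$ in place of $\k$, where $C$ depends only on $c, m, M$. Points~1 and~2 amount to a Taylor expansion of $\bar G\N_\th = (\Phi\N_\th/n)/(S\N(\th)/n)$; Point~3 is an application of Proposition \ref{prop:atgel}(1) to the new parameters $\bar\th\N$.

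\textbf{Points 1 and 2.} Substituting the $\|\cdot\|_2$-expansions of Lemma \ref{lem:aftergel}---namely $\Phi\N_\th/n = G_\mu - (\a(N)/n) F_\mu/m_1^\mu + O(\k\a(N)/N)$ and $S\N(\th)/n = 1 - \a(N)/n + O(\k\a(N)/N)$ (using $n \geq cN$ to absorb factors of $n/N$)---and dividing via $1/(1-x) = 1 + x + O(x^2)$ while using $(\a(N)/n)^2 = o(\a(N)/N)$ from Assumption \ref{as:alpha}, recovers Point~1 in $\|\cdot\|_2$. Differentiating at $y=1$ (permitted by the $\|\cdot\|_2$ norm) and using $F_\mu'(1) = m_1^\mu + m_2^\mu$, $F_\mu''(1) = 2m_2^\mu + m_3^\mu$ yields the formulas for $\bar m\N_1$ and $\bar m\N_2$. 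The third factorial moment cannot be extracted from Point~1 (which only controls two derivatives) and is treated directly: $S\N(\th)\,\bar m\N_3(\th) = n m_3^\mu - w'\N(\th)$, where $w'\N(\th) := \sum_{k,r} r(r-1)(r-2) v\N_{k,r}(\th) \leq w\N(\th) \leq \k n$ by Proposition \ref{prop:atgel}(5). Combined with $S\N(\th)/n = 1 + O(\a(N)/n)$, this gives $|\bar m\N_3(\th) - m_3^\mu| \leq C\k$, which is absorbed.

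\textbf{Point 3 and main obstacle.} We apply Proposition \ref{prop:atgel}(1) to $\bar\th\N = (S\N(\th),\bar\mu\N(\th))$, which first requires verifying $\bar\th\N \in \K\N_{c',m',M'}$ for constants depending only on $c, m, M$: this follows from Point~2 (giving $\bar m\N_3 \geq m/2$ and $\bar m\N_2/\bar m\N_1 \geq 1+m/2$ for $\k$ small), from $S\N(\th)/N \geq c/2$, and from the pointwise bound $\bar\mu\N(\th)(r) \leq 2\mu(r)$ which transfers the $(5+\eta)$-moment bound. The proposition then yields $|\bar\s\N(\th) + \log(1 - \bar m\N_1/\bar m\N_2) - (\a(N)/S\N(\th))\bar m\N_3/(2\bar m\N_2(\bar m\N_2 - \bar m\N_1))| \leq \k \a(N)/N$. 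Expanding each summand to first order in $\a(N)/n$ using Point~2 and $\a(N)/S\N(\th) = \a(N)/n + O((\a(N)/n)^2)$, the logarithmic term produces a first-order coefficient $(m_1^\mu/(m_2^\mu - m_1^\mu))(A_1/m_1^\mu - A_2/m_2^\mu)$, where $A_1, A_2$ denote the first-order corrections to $\bar m\N_1, \bar m\N_2$ from Point~2. The elementary simplification
\[
\tfrac{1}{m_2^\mu - m_1^\mu}\bigl(1 - m_2^\mu/m_1^\mu + m_3^\mu/m_2^\mu\bigr) = -\tfrac{1}{m_1^\mu} + \tfrac{m_3^\mu}{m_2^\mu(m_2^\mu - m_1^\mu)},
\]
together with the contribution $(\a(N)/n)\, m_3^\mu/(2 m_2^\mu(m_2^\mu - m_1^\mu))$ from the third summand, yields exactly the coefficient $3m_3^\mu/(2 m_2^\mu(m_2^\mu - m_1^\mu)) - 1/m_1^\mu$ announced in the statement. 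The only real obstacle is this algebraic verification together with the bookkeeping of constants ensuring the final precision is $\k\a(N)/N$; the conceptual content is entirely carried by Lemma \ref{lem:aftergel} and Proposition \ref{prop:atgel}.
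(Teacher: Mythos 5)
Your proof is correct and follows essentially the same route as the paper: Points 1 and 2 by expanding the quotient $\Phi\N_\th/S\N(\th)$ using Lemma \ref{lem:aftergel}, the third factorial moment by a direct argument from Proposition \ref{prop:atgel}(5), and Point 3 by applying Proposition \ref{prop:atgel}(1) to $\bar\th\N$ and performing the first-order expansions. You also supply two small details the paper leaves implicit---the pointwise bound $\bar\mu\N(\th)(r)\leq 2\mu(r)$ to transfer the $(5+\eta)$-moment condition to $\bar\th\N$, and the distinction between $r(r-1)(r-2)$ and $r^3$ when bounding $\bar m\N_3$---both of which are welcome clarifications but not departures.
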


\begin{proof}
\begin{enumerate}[fullwidth]
\item We know from the previous result that w.h.p.
\begin{align*}
\bar{G}\N_{\th} & = \frac{\Phi\N(\th)}{S\N(\th)} \\
& = \frac{1}{S\N(\th)/n} \frac{1}{n} \Phi\N(\th) \\
& = \frac{1}{1 - \a(N)/n + \k \: O(\a(N)/n)} \left ( G_{\mu} - \frac{\a(N)}{n} \frac{1}{m^{\mu}_1} F_{\mu} + \k \: O \left (\frac{\a(N)}{n} \right ) \right ) \\
& = \left ( 1 + \frac{\a(N)}{n} + \k \: O(\a(N)/n) \right ) \left ( G_{\mu} - \frac{\a(N)}{n} \frac{1}{m^{\mu}_1} F_{\mu} + \k \: O \left (\frac{\a(N)}{n} \right ) \right ) \\
& = G_{\mu} + \frac{\a(N)}{n} \left ( G_{\mu} - \frac{1}{m^{\mu}_1} F_{\mu} \right ) + \k \: O \left (\frac{\a(N)}{n} \right ),
\end{align*}
and this holds in the $\| \cdot \|_2$ sense. The first part of the result follows. The estimations of $\bar{m}\N_1(\th) = (\bar{G}_{\th}\N)'(1)$ and $\bar{m}\N_2(\th) = (\bar{G}_{\th}\N)''(1)$ are then easy computations.

\item For the third moment, note that by Proposition \ref{prop:atgel},
\begin{align*}
\bar{m}\N_3(\th) & = \frac{1}{S\N(\th)} \sum_{k,r \geq 0} r^3 V_{k,r}\N(\th) \\
& =  \frac{1}{S\N(\th)} \sum_{k,r \geq 0} r^3 \fV_{k,r}(\th,\s\N(\th)) - \frac{1}{S\n} \sum_{k,r \geq 0} r^3 v_{k,r}\N(\th) \\
& = \frac{n}{S\N(\th)} \left ( m^{\mu}_3 - \frac1n  w\N(\th) \right ) \\
& = \left ( 1 + \k \: O (1) \right ) \left ( m^{\mu}_3 + \k \: O(1) \right ) \\
& = m^{\mu}_3 + \k \: O (1),
\end{align*}
and the result follows.

\item Finally, the estimation of $S\N(\th)$ and $\bar{m}\N_i(\th)$ above readily shows that the assumptions of Proposition \ref{prop:atgel} are still in force, maybe at the price of considering different constants. Therefore, Proposition \ref{prop:atgel} applies, and shows that w.h.p.
\[
\bar{\s}\N(\th) = - \log \left ( 1 - \frac{\bar{m}\N_1(\th)}{\bar{m}\N_2(\th)} \right ) + \frac{\a(N)}{S\N(\th)} \frac{\bar{m}\N_3(\th)}{2 \bar{m}\N_2(\th) (\bar{m}\N_2(\th) - \bar{m}\N_1(\th))} + \k \: O \left ( \frac{\a(N)}{N} \right ).
\]
A couple lines of computation using the previous results show that
\[
\frac{\bar{m}\N_1(\th)}{\bar{m}\N_2(\th)} = \frac{m^{\mu}_1}{m^{\mu}_2} \left ( 1 + \frac{\a(N)}{n} \left ( \frac{1}{m^{\mu}_1} - \frac{m^{\mu}_2}{(m^{\mu}_1)^2} + \frac{m^{\mu}_3}{m^{\mu}_1 m^{\mu}_2} + \k \: O(1) \right ) \right ).
\]
A couple more lines ensure that
\[
\log \left ( 1 - \frac{\bar{m}\N_1(\th)}{\bar{m}\N_2(\th)} \right ) = \log \left ( 1 - \frac{m^{\mu}_1}{m^{\mu}_2} \right ) + \frac{\a(N)}{n} \left ( \frac{1}{m^{\mu}_1} - \frac{m^{\mu}_3}{m^{\mu}_2 ( m^{\mu}_2 - m^{\mu}_1)} \right ) + \k \: O \left ( \frac{\a(N)}{N} \right ).
\]
The corrections on the second term of $\bar{\s}\N(\th)$ are of the second order, and thus, one may remove the bars there at the price of another factor $\k \: O (\a(N)/n)$. An easy computation then allows to conclude.
\end{enumerate}
\end{proof}

\subsection{Non-degenerate conditioning}

Recall from Lemma \ref{lem:combiB2} that, right after a gelation event, conditionally on the particles remaining in solution and the number of activated arms, the structure in solution is that of a CM on these particles with this number of activated arms, conditioned on having no large component. It therefore makes sense to ask whether this conditioning matters or not. The main interest of the previous result is that, w.h.p., this conditioning does not matter. It will be an easy consequence of the following computation, which we will reuse later.

\begin{lemma} \label{lem:diffgeltimes}
Assume that \eqref{eq:alpha} holds, and consider $\th = (n,\mu) \in \K\N_{c,m,M}$ and $\k > 0$. Then, for some constant $K = K(c,m,M,\k)$, with probability at least $1 - K p_N$,
\[
\left | \vs(\bar{\th},B\N(\th)) + \log \left ( 1 - \frac{m_1^{\mu}}{m_2^{\mu}} \right ) - \frac{\a(N)}{n} \left ( \frac{m_3^{\mu}}{2 m_2^{\mu} (m_2^{\mu} - m_1^{\mu})} - \frac{1}{m_1^{\mu}} \right ) \right | \leq \k \frac{\a(N)}{N}.
\]
In particular,
\[
\left | \bar{\s}\N(\th) - \vs(\bar{\th},B\N(\th)) - \frac{\a(N)}{n} \frac{m_3^{\mu}}{m_2^{\mu}(m_2^{\mu} - m_1^{\mu})} \right | \leq \k \frac{\a(N)}{N}.
\]
\end{lemma}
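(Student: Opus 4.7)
The plan is to apply Lemma~\ref{lem:charactgraph} to $\bar{\th}\N$ at the parameter value $B = B\N(\th)$, and then combine this with the first-order expansions of $S\N(\th)$, $B\N(\th)$ and $\bar{m}_1\N(\th)$ coming from Lemmas~\ref{lem:aftergel} and~\ref{lem:mubar}. To invoke Lemma~\ref{lem:charactgraph} we first check that $\bar{\th}\N \in \K\N_{c',m',M'}$ for some constants $c',m',M'$ depending only on $(c,m,M)$: by Lemma~\ref{lem:mubar}, the moments $\bar{m}_1\N(\th)$ and $\bar{m}_2\N(\th)$ are perturbations of $m_1^\mu$ and $m_2^\mu$ of order $\a(N)/n$, hence $\bar{m}_2\N(\th)/\bar{m}_1\N(\th)$ stays bounded away from $1$; $\bar{m}_3\N(\th)$ remains bounded below; and the $(5+\eta)$-th moment bound is inherited from $\mu$ together with the fact that $S\N(\th)/n$ is bounded below. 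We also need $B\N(\th)/(S\N(\th)\bar{m}_1\N(\th))$ bounded away from $1$, which follows from the expansion below since the leading term is $m_1^\mu/m_2^\mu \le 1/(1+m)$. Each event we rely on has probability at least $1 - K p_N$ for some $K = K(c,m,M,\k)$, and their bounded intersection still has the same kind of bound up to enlarging $K$.

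On this intersection event, Lemmas~\ref{lem:aftergel} and~\ref{lem:mubar} yield
\[
\frac{B\N(\th)}{n} = \frac{(m_1^\mu)^2}{m_2^\mu} + \frac{\a(N)}{n}\left(\frac{m_1^\mu m_3^\mu}{2(m_2^\mu)^2} - 2\right) + \k\,O\!\left(\frac{\a(N)}{N}\right),
\]
\[
\frac{S\N(\th)}{n} = 1 - \frac{\a(N)}{n} + \k\,O\!\left(\frac{\a(N)}{N}\right),
\]
\[
\bar{m}_1\N(\th) = m_1^\mu + \frac{\a(N)}{n}\left(m_1^\mu - \frac{m_1^\mu + m_2^\mu}{m_1^\mu}\right) + \k\,O\!\left(\frac{\a(N)}{N}\right).
\]
A routine product-and-quotient expansion at first order in $\a(N)/n$ then gives
\[
\frac{B\N(\th)}{S\N(\th)\,\bar{m}_1\N(\th)} = \frac{m_1^\mu}{m_2^\mu} + \frac{\a(N)}{n}\left(\frac{1}{m_2^\mu} - \frac{1}{m_1^\mu} + \frac{m_3^\mu}{2(m_2^\mu)^2}\right) + \k\,O\!\left(\frac{\a(N)}{N}\right).
\]

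Inserting this in the identity $\vs(\bar{\th}\N, B\N(\th)) = -\log\bigl(1 - B\N(\th)/(S\N(\th)\bar{m}_1\N(\th))\bigr) + \k\,O(\a(N)/N)$ coming from Lemma~\ref{lem:charactgraph}, factoring $(m_2^\mu - m_1^\mu)/m_2^\mu$ out of the argument $1 - (\cdots)$, and applying $-\log(1+x) = -x + O(x^2)$, one obtains after simplification the first claimed estimate, with the $O(\a(N)/n)$ correction equal to $(\a(N)/n)\bigl(m_3^\mu/(2m_2^\mu(m_2^\mu - m_1^\mu)) - 1/m_1^\mu\bigr)$. The second inequality then follows by subtracting this expansion from the expression for $\bar{\s}\N(\th)$ in point~3 of Lemma~\ref{lem:mubar}: the constant $-\log(1 - m_1^\mu/m_2^\mu)$ and the $-(\a(N)/n)/m_1^\mu$ contributions cancel, leaving $(\a(N)/n)\,m_3^\mu/(m_2^\mu(m_2^\mu - m_1^\mu))$ up to error $\k\,O(\a(N)/N)$. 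The only real difficulty is the bookkeeping of the Taylor expansions; this is routine provided one works uniformly on $\K\N_{c,m,M}$, so that all denominators $m_1^\mu$, $m_2^\mu$, $m_2^\mu - m_1^\mu$ are bounded below, and in the regime $\a(N)/n \to 0$ guaranteed by Assumption~\ref{as:alpha}, so that all second-order terms in $\a(N)/n$ are absorbed into $\k\,O(\a(N)/N)$.
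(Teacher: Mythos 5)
Your proof is correct and follows essentially the same route as the paper's. You combine the first-order expansions of $B\N(\th)/n$, $S\N(\th)/n$ and $\bar{m}_1\N(\th)$ from Lemmas~\ref{lem:aftergel} and~\ref{lem:mubar} to obtain the same expansion of $B\N(\th)/(S\N(\th)\bar{m}_1\N(\th))$, feed it into the $-\log(1-\cdot)$ formula from Lemma~\ref{lem:charactgraph}, and subtract from the expression for $\bar{\s}\N(\th)$ in Point~3 of Lemma~\ref{lem:mubar}; your intermediate check that the ratio stays bounded away from $1$ (so that Lemma~\ref{lem:charactgraph} applies) and that $\bar{\th}\N$ remains in a set $\K\N_{c',m',M'}$ mirrors what the paper invokes implicitly from the preceding proof.
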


\begin{proof}
From Lemma \ref{lem:mubar}, we know $B\N(\th)$, $S\N(\th)$ and $\bar{m}\N_i(\th)$ with good precision. Easy computations allow to obtain
\[
\frac{B\N(\th)}{S\N(\th) \bar{m}\N_1(\th)} = \frac{m_1^{\mu}}{m_2^{\mu}}  \left ( 1 + \frac{\a(N)}{n} \left ( \frac{m_3^{\mu}}{2 m_1^{\mu} m_2^{\mu}} + \frac{m_1^{\mu} - m_2^{\mu}}{(m_1^{\mu})^2} \right ) \right ) + \k O \left ( \frac{\a(N)}{N} \right ).
\]
Therefore, w.h.p., and maybe at the price of taking smaller constants,
\[
B\N(\th)/ (S\N(\th) \bar{m}\N_1(\th)) < 1 - m,
\]
and, as we mentioned in the previous proof, $\bar{\th} \in \K\N_{c,m,M}$. We may then use Lemma \ref{lem:charactgraph} to write
\[
\vs(\bar{\th},B\N(\th)) = - \log \left ( 1 - \frac{B\N(\th)}{S\N(\th) \bar{m}_1\N(\th)} \right ) + \k O \left ( \frac{\a(N)}{N} \right )
\]
and the result follows after easy computations using Lemma \ref{lem:aftergel}. The second part is then a direct consequence of Point 3 of Lemma \ref{lem:aftergel}.
\end{proof}

\begin{prop} \label{prop:trivcond}
Assume that \eqref{eq:alpha} holds, and consider $\th = (n,\mu) \in \K\N_{c,m,M}$ and $\k > 0$. Then, for some constant $K = K(c,m,M,\k)$, conditionally on $\bar{\th}\N$ and $B\N(\th)$, a configuration model $CM(\bar{\th}\N,B\N(\th))$ has no large component with probability at least $1 - K p_N$.
\end{prop}

\begin{proof}
From the previous result, w.h.p.
\[
\bar{\s}\N(\bar{\th}) - \vs(\bar{\th},B\N(\th)) \geq \frac{\a(N)}{n} \left ( \frac{m_3^{\mu}}{m_2^{\mu}(m_2^{\mu} - m_1^{\mu})} - \k \right ).
\]
The fact that $\th \in \K\N_{c,m,M}$ ensures that, if $\k$ is chosen small enough, then this quantity is positive. Therefore, w.h.p., gelation in a $\cG(\bar{\th}\N)$ occurs strictly after the time when $B\N(\th)$ arms are activated, and thus a $CM(\bar{\th}\N,B\N(\th))$ has w.h.p. no large component.
\end{proof}

\section{Alternative model} \label{sec:altmodel}

After these two sections concerning random graphs, we come back to our model, and start putting the pieces together. The goal of this section is to present an alternative model which is easier to study than ours since it has a much nicer combinatorial structure. We will prove however that w.h.p. the two models have the same distribution on compact time intervals. 

According to Lemma \ref{lem:combiB3}, at any gelation time $\tau$ and conditionally on the number of particles in solution $n = |S(\tau)|$, their empirical measure of degrees $\mu = \mu_{\tau}$ and their number of activated arms $B = B(\tau)$, the configuration in solution is that of conditioned configuration model $CM'(n,\mu,B)$. Informally speaking, our goal is to somehow get rid of the conditioning. 

We introduce below a slightly more complicated process, which is however merely built from DCM. Encoded in that process are two processes: our original one (at least, its state in solution), and another one which can be more easily studied. In other words, we have a coupling between our process and a more simple one: this is the purely combinatorial Lemma \ref{lem:equivproc}. The fundamental fact is that the processes are actually equal w.h.p., as proved in Lemma \ref{lem:equivasymp}. This second statement makes heavy use of what was proven in Proposition \ref{prop:trivcond}. This construction and the results are very similar to those presented in Section 5 of \cite{MN0}, to which we refer for a more elementary exposition.

For each $N \geq 1$, consider a family $(\cG\N(\th,k))$ of independent DCM, indexed by $k \geq 1$ and $\th = (n,\mu)$, with $n \in [N]$ and $\mu \in \cM\n$ (these were defined in Section \ref{sec:prelim}). Note that there is only a countable number of parameters and there is thus no issue considering this family. For each such $\th$ and $k$, $\cG\N(\th,k)$ has the distribution of $\cG(\th)$. We shall use the same notation as in Section \ref{sec:JL}, but adding a parameter $k$ to insist that we are dealing with $\cG\N(\th,k)$. Specifically, we let
\begin{itemize}
\item $\vs\N(\th,R,k)$ to be the first time when there are $R$ activated arms in $\cG\N(\th,k)$;
\item $\s\N(\th,k)$ be the time when a component of size $\geq \a(N)$ appears in $\cG\N(\th,k)$;
\item $s\N(\th,k)$ the size of that large component;
\item $B\N(\th,k)$ the number of activated arms remaining in solution at $\s\N(\th,k)$;
\item $\bar{\mu}\N(\th,k)$ the empirical distribution of degree of the particles remaining on solution;
\item $\bar{\th}\N(k) = (S\N(\th,k),\bar{\mu}\N(\th,k))$.
\end{itemize}

Similarly as in \cite{MN0}, let us construct the process $(Y(t))$ as follows. 
\begin{description}
\item[\textbf{Step 0}] Let $\rho_0 = (N,\mu\N)$, $t_0 = 0$, $D_0 = 0$, and $K(0) = 1$. Consider $\cG\N(\rho_0,K(0))$, up to the time $t_1 = \s(\rho_0,K(0))$ when a large component appears. Then let $\rho_1 = \bar{\rho_0}\N(K(0))$, $D_1 = B\N(\rho_0,K(0))$, define $Y(s) = \cG\N_s(\rho_0,K(0))$ for $s \in [0,t_1)$, and go to step 1.
\item[\textbf{Step i}] Consider here the graph processes $\cG\N(\rho_i,k)$ for $k \geq 1$. Define $K(i)$ to be the first $k$ such that $\cG\N_{\vs\N(\rho_i,D_i,k)}(\th_i,k)$ has no large component. Then take
\begin{equation} \label{eq:tip1}
t_{i+1} = t_i + \s\N(\rho_i,K(i)) - \vs\N(\rho_i,D_i,K(i)),
\end{equation}
as well as $\rho_{i+1} = \bar{\rho_i}\N(K(i))$, $D_{i+1} = B\N(\rho_i,K(i))$, define
\begin{equation} \label{eq:Ys}
Y(s) = \cG\N_{\vs\N(\rho_i,D_i,k) - t_i + s}(\rho_i,K(i)), \quad s \in [t_i,t_{i+1}),
\end{equation} 
and go to step $i+1$.
\end{description}
Obviously, we stop and let $t_{i+1} = \pinf$ when no more large component can be created.

An important thing to notice, and the main difference with \cite{MN0}, are Equations \eqref{eq:tip1} and \eqref{eq:Ys}. To understand where it comes from, let us explain the construction in more details. At Step 0, we are just looking at a DCM $\cG\N(\rho_0,1)$ until the gelation time $t_1$, just like our original process. Thanks to Lemma \ref{lem:combiB3}, we know that, right after gelation, what remains in solution has the distribution of a $CM'(\rho_1,D_1)$. Now, we want to replicate this latter graph, thanks to some DCM $\cG\N(\rho_1,k)$. To this end, we look at these processes up to the time $\vs\N(\rho_1,D_1,k)$ when $D_1$ arms are activated. At this time, we obtain graphs with distribution $CM(\rho_1,D_1)$. We choose the $K(1)$-th, the first one which has no giant component, and which has therefore the conditioned distribution $CM'(\rho_1,D_1)$. By the Markov property, the evolution of our original process and of $\cG\N(\rho_1,K(1))$ afterwards are similar.

In particular, note that we are dealing with two different time lines here: the one of $\cG\N(\rho_0,1)$ and the one of $\cG\N(\rho_1,K(1))$. We look at the first process on the interval $[0,t_1)$, and at the second one on the interval $[\vs\N(\rho_1,D_1,k),\s(\rho_i,K(i)))$. Therefore, Equations \eqref{eq:tip1} and \eqref{eq:Ys} merely allow to define our new process $Y$ by pasting these two parts one right after the other. In the definition of \cite{MN0}, the times were automatically matching and no such treatment was necessary.

\begin{remark}
It might seem more natural to condition on $N_t$ and $\pi_t$ and use Lemma \ref{lem:combipi}. However, the whole trick of the previous construction is to be able to replicate a $CM'(\rho_1,D_1)$ thanks to a DCM, so as to be able to study the dynamic afterwards. As we saw, it suffices to stop at the right time. But, even replicating a $CM(N_t,\pi_t)$ does not seem possible, since there is possibly no time when we get exactly the distribution of activated arms $\pi_t$. This explains the slightly bizarre conditioning on the total number of activated arms.
\end{remark}

By iterating this reasoning, it should be clear that we have constructed a process which evolves in the same way as our original process in solution, at least in between two gelation times. Our construction does not ensure continuity (in terms of graph structure) at the gelation events, but it does not matter for all intents and purposes. Doing so is possible, but would further complicate the definition. Therefore, with a slight abuse of notation, for two processes $U, V$ and two sequences of random times $(a_i), (b_i)$, we define
\[
((a_i), U) \eqlaw ((b_i), V)
\]
if 
\[
((a_i), U_{(a_i)})) \eqlaw ((b_i), V_{(b_i)})),
\]
where, for an interval $I$, $U_I = (U(t), t \in I)$ and $U_{(a_i)} = (U_{[a_i,a_{i+1})},i \geq 0)$. In the following result, recall that $(G_S(t))$ is the configuration in solution of our process and $(\tau_i)_{i \geq 1}$ is the sequence of gelation times. We also let $\tau_0 = 0$.

\begin{lemma} \label{lem:equivproc}
The equality
\[
((\tau_i),G_S) \eqlaw ((t_i),Y)
\]
holds.
\end{lemma}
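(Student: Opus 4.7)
The plan is to proceed by induction on $i \geq 0$, proving the stronger joint statement
\[
((\tau_0,\dots,\tau_i), G_S|_{[0,\tau_i)}, S(\tau_i), \mu_{\tau_i}, B(\tau_i)) \eqlaw ((t_0,\dots,t_i), Y|_{[0,t_i)}, S^{(N)}\!(\rho_{i-1},K(i-1)), \bar{\mu}^{(N)}\!(\rho_{i-1},K(i-1)), D_i),
\]
where the rightmost marginals encode $\rho_i$ and $D_i$ from the construction (with the obvious convention at $i=0$). The base case $i=0$ is immediate: both processes start empty, $\tau_0 = t_0 = 0$, and the initial degree profiles coincide by definition.

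For the inductive step I would describe the ``next slab'' of each process and check that they agree in law. On the original side, $\tau_i$ is a gelation stopping time (see Section~\ref{sec:combin}), so Lemma~\ref{lem:combiB3} says that, conditionally on $(S(\tau_i),\mu_{\tau_i},B(\tau_i)) = (\mathcal{S},\mu,B)$, the configuration $G_S(\tau_i)$ is distributed as $CM'(|\mathcal{S}|,\mu,B)$. Moreover, by the strong Markov property together with the memorylessness of the exponential clocks carried by the arms that have not yet activated, the subsequent dynamics on $[\tau_i,\tau_{i+1})$ is exactly the pairing dynamics on the $|\mathcal{S}|$ particles started from $G_S(\tau_i)$, driven by fresh i.i.d.\ $\cE(1)$ clocks on the non-activated arms and stopped the first time a component of size at least $\a(N)$ appears. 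On the alternative side, for each $k \geq 1$ the graph $\cG^{(N)}_{\vs^{(N)}(\rho_i,D_i,k)}(\rho_i,k)$ is by definition of $\vs^{(N)}$ a $CM(\rho_i,D_i)$, and these graphs are independent across $k$. Letting $p(\rho_i,D_i) := \pp(CM(\rho_i,D_i)\text{ has no large component})$, the variable $K(i)$ is geometric with parameter $p(\rho_i,D_i)$, and conditionally on $K(i)=k$ the graph $\cG^{(N)}_{\vs^{(N)}(\rho_i,D_i,k)}(\rho_i,k)$ has the law $CM'(\rho_i,D_i)$.

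Using the inductive hypothesis to identify $(\rho_i,D_i)$ in law with $((|S(\tau_i)|,\mu_{\tau_i}),B(\tau_i))$, the two starting configurations at $t_i$ and $\tau_i$ coincide in law. By the memoryless property applied to the residual $\cE(1)$ lifetimes of the not-yet-activated arms in $\cG^{(N)}(\rho_i,K(i))$ past time $\vs^{(N)}(\rho_i,D_i,K(i))$, the continuation of $\cG^{(N)}(\rho_i,K(i))$ is governed by the same pairing dynamics as the continuation of $G_S$ after $\tau_i$, and $\s^{(N)}(\rho_i,K(i)) - \vs^{(N)}(\rho_i,D_i,K(i))$ plays the role of $\tau_{i+1} - \tau_i$. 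The shifts built into \eqref{eq:tip1} and \eqref{eq:Ys} are exactly what pastes these two descriptions along a common time axis, so $(Y|_{[t_i,t_{i+1})}, t_{i+1})$ and $(G_S|_{[\tau_i,\tau_{i+1})},\tau_{i+1})$ agree in law jointly with the updated parameters, closing the induction.

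The one point that requires a short justification is the strict positivity of $p(\rho_i,D_i)$, which is needed both to guarantee $K(i) < \infty$ almost surely and to make $CM'(\rho_i,D_i)$ well defined. This is however automatic: by Lemma~\ref{lem:combiB3} applied to the original model, $p(\rho_i,D_i)$ is exactly the probability of the event we condition upon in the description of $G_S$ after a gelation event, and this event must have positive probability for the conditioning to be meaningful. No other genuinely hard step arises; the structural content is entirely in Lemma~\ref{lem:combiB3}, and the present lemma is the purely combinatorial/coupling consequence.
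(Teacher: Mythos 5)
Your proof is correct and follows essentially the same route as the paper: induction with a joint distributional hypothesis that tracks the inter-gelation parameters, identification of $G_S(\tau_i)$ as a $CM'(\xi_i,B_i)$ via Lemma~\ref{lem:combiB3}, recognition that the first-accepted $\cG\N(\rho_i,k)$ is a rejection sample for $CM'(\rho_i,D_i)$, and the strong Markov property together with memorylessness of the residual clocks to propagate the coupling to the next slab. One small notational glitch: on the left-hand side of your induction hypothesis you should track $N_{\tau_i}=|S(\tau_i)|$ (the cardinality) rather than the set $S(\tau_i)$ itself, both to typecheck against the scalar $S\N(\rho_{i-1},K(i-1))$ on the right and to match the conditioning actually used in Lemma~\ref{lem:combiB3}. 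The one thing you add beyond the paper's proof is the explicit justification that $p(\rho_i,D_i)>0$ almost surely so that $K(i)<\infty$ and $CM'(\rho_i,D_i)$ is well-defined; the paper takes this for granted, and your argument (pushing back through the induction hypothesis to the original process, whose observed post-gelation configuration witnesses positivity) is sound, if slightly informal.
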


\begin{proof}
Let us write the parameters of our original model as $\xi_i = (N_{\tau_i}, \mu_{\tau_i})$, $B_i = B _{\tau_i}$ and $C_i = (G_S)_{[\tau_i,\tau_{i+1})}$. In particular $\xi_0 = (N,\mu\N) = \rho_0$. Now, our process is, before gelation, just a $\cG(\xi_0)$, so has the distribution of $\cG\N(\rho_0,1)$. In particular, the time of the appearance of a large component and the properties of what remains in solution are the same as for $Y$, i.e.
\[
[\tau_0, \tau_1, \xi_1, B_1, C_0] \eqlaw [t_0, t_1, \rho_1, D_1, Y_{[t_0,t_1)}].
\]
Let us then prove by induction that for every $i \geq 1$, $\cH_i$ holds, where $\cH_i$ is the assumption

\[
([\tau_{j-1}, \tau_j, \xi_j, B_j, C_{j-1}, 1 \leq j \leq i) \eqlaw ([t_{j-1}, t_j, \rho_j, D_j, Y_{[t_{j-1},t_j)}], 1 \leq j \leq i).
\]
We just checked $\cH_1$, so assume that $\cH_i$ holds for some $i \geq 1$. On the one hand, we know from Lemma \ref{lem:combiB3} that
\[
G_S(\tau_i) \eqlaw CM'(\xi_i,B_i).
\]
Now, from the construction, $Y(t_i)$  is constructed from the graphs $\cG\N_{\vs\N(\rho_i,D_i,k)}(\rho_i,k)$. Each has distribution $CM(\rho_i,D_i)$, and we choose the $K(i)$-th graph, the first which has no large component. It is thus distributed as a $CM'(\rho_i,D_i)$ graph. By $\cH_i$, this has the same distribution as a $CM'(\xi_i,B_i)$ graph, and thus
\[
Y(t_i) \eqlaw CM'(\rho_i,D_i) \eqlaw CM'(\xi_i,B_i) \eqlaw G_S(\tau_i).
\]
Then, by Markov property, $G_S(\tau_i + t)_{t \geq 0}$ and $Y(t_i+t)_{t \geq 0}$ evolve in the same way until the next gelation event, and $\cH_{i+1}$ follows.
\end{proof}

Hence, from now on, we can assume that $(G_S(t))$ is constructed as above. In particular, it is coupled with the process $(Z(t))$ that we introduce now. We write $\cG\N(\th) = \cG\N(\th,1)$.
\begin{description}
\item[\textbf{Step 0}] Let $\th_0 = (N,\mu\N)$, $\s_0 = 0$ and $A_0 = 0$. Consider $\cG\N(\th_0)$, up to the time $\s_1 = \s(\th_0,1)$ when a large component appears. Then let $\th_1 = \bar{\th_0}\N(1)$, $A_1 = B\N(\th_0,1)$, define $Z(t) = \cG\N_t(\th_0)$ for $t \in [0,\s_1)$, and go to step 1.
\item[\textbf{Step i}] Consider here the graph processes $\cG\N(\th_i)$. Then take
\begin{equation} \label{eq:sigmaip1}
\s_{i+1} = \s_i + \left (\s\N(\th_i,1) - \vs\N(\th_i,A_i,1) \right ) \vee 0,
\end{equation}
as well as $\th_{i+1} = \bar{\th_i}\N(1)$, $A_{i+1} = B\N(\th_i,1)$, and define
\begin{equation} \label{eq:Zs}
Z(s) = \cG\N_{\vs\N(\rho_i,D_i,1) - \s_i + s}(\th_i,1), \quad s \in [\s_i,\s_{i+1}),
\end{equation}
where this interval might be empty if $\s_{i+1} = \s_i$. Then go to step $i+1$.
\end{description}
In other words, we consider the same model as above, but taking always $K(i) = 1$. It should be clear that $(Z(t))$ is easier to study than $(Y(t))$. Our next claim is that under \eqref{eq:alpha}, $Z$ and $Y$ are barely any different, at least on compact intervals. To this end, define
\[
I = \inf \{ i \geq 1, K(i) \neq 1 \}, \quad E(T) = \{t_I > T\},
\]
so that $E(T)$ is the event that $K(i) = 1$ for all the $\cG\N$ we consider before time $T$. Hence, on $E(T)$, the processes $(Y(t), t \in [0,T])$ and $(Z(t), t \in [0,T])$ are equal. As usual, we will write $E\N(T)$ when we want to insist on the dependence on $N$. We will prove the following.

\begin{lemma} \label{lem:equivasymp}
If Assumptions \ref{as:mu5} and \ref{as:alpha} hold, then, for all $T \geq 0$, $\P(E\N(T)) \to 1$.
\end{lemma}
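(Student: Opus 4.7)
The plan is to control $\P(E\N(T)^c) = \P(\exists i \geq 1 : K(i) \neq 1, t_i \leq T)$ by a union bound over the gelation steps of the $Y$ construction. At each step $i$, conditional on the prior history, the DCMs $(\cG\N(\rho_i, k))_{k \geq 1}$ are independent, and $K(i) = 1$ exactly when the first attempt $\cG\N(\rho_i, 1)$ has no large cluster at the conditioning time $\vs\N(\rho_i, D_i, 1)$. Proposition~\ref{prop:trivcond} tells us that whenever $\rho_{i-1} \in \K\N_{c,m,M}$, one has $\P(K(i) \neq 1 \mid \rho_{i-1}, D_i) \leq K p_N$ with $p_N$ as in~\eqref{eq:pN}.

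Two ingredients are then needed: a uniform cap on the number of relevant steps, and a guarantee that the $\rho_{i-1}$'s involved lie in $\K\N_{c,m,M}$ for some fixed constants. For the first, Lemma~\ref{lem:diffgeltimes} yields a deterministic lower bound $t_i - t_{i-1} \geq c' \a(N)/N$ on the good event $\{\rho_{i-1} \in \K\N_{c,m,M}\} \cap \{K(i) = 1\}$, so at most $I_{\max} := \lceil T N/(c' \a(N)) \rceil$ gelation steps fit into $[0,T]$. For the second, the coupling $((\tau_i), G_S) \eqlaw ((t_i), Y)$ from Lemma~\ref{lem:equivproc} transfers Lemma~\ref{lem:alwaystKN} to $Y$: since $\mu\N_t$ is constant between gelation events in the original model, we get $\rho_i \in \tK\N_{c,m,M}$ w.h.p.\ for every $i$ with $t_i \leq T$, for constants depending only on $T$. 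The remaining supercriticality gap $m_2/m_1 \geq 1+m$ upgrading $\tK\N$ to $\K\N$ is maintained inductively via Lemma~\ref{lem:mubar}, whose per-step errors on $m_1$ and $m_2$ accumulate to only $O(\k)$ over the $I_{\max}$ steps (since each is of order $\k\a(N)/N$).

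Putting these together, the union bound gives
\[
\P(E\N(T)^c) \leq I_{\max} \cdot K p_N + o(1) = O\!\left( \frac{N^2}{\a(N)^{5/2}} + \left( \frac{\a(N)}{N} \right)^{\eta} \right) + o(1),
\]
which tends to $0$ under Assumption~\ref{as:alpha}: $\a(N) \gg N^{4/5}$ forces $\a(N)^{5/2} \gg N^2$, and $\a(N)/N \to 0$.

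The hard part will be preserving the supercriticality gap uniformly over all $O(N/\a(N))$ gelation steps. Indeed, by the limiting description (Theorem~\ref{th:limit}), the configuration in solution becomes critical for any $t > \Tgel$, so in the discrete model the gap $m_2/m_1 - 1$ is only of order $\a(N)/N$ at late times; choosing the constants in $\K\N_{c,m,M}$ carefully, tied both to $T$ and to Assumption~\ref{as:alpha}, is what allows the inductive propagation of $\rho_i \in \K\N_{c,m,M}$ to close and Proposition~\ref{prop:trivcond} to remain applicable at every relevant step.
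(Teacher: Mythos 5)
Your high-level plan — a union bound over the at-most-$O(N/\a(N))$ gelation steps, bounding $\P(K(i)\neq 1)$ at each step via Proposition~\ref{prop:trivcond}, counting steps via the lower bound on $t_{i+1}-t_i$ from Lemma~\ref{lem:diffgeltimes} — matches the paper's strategy for the analogue of your main union-bound term (the paper's $F\N(T,3)$), and your final computation of $I_{\max}\, K p_N$ is correct. You also correctly identify that the crux is ensuring $\rho_{i-1}\in\K\N_{c,m,M}$ at each step so that Proposition~\ref{prop:trivcond} and Lemma~\ref{lem:diffgeltimes} are applicable.

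However, your proposed mechanism for this crux — ``the remaining supercriticality gap $m_2/m_1\geq 1+m$ is maintained inductively via Lemma~\ref{lem:mubar}, whose per-step errors on $m_1$ and $m_2$ accumulate to only $O(\k)$ over the $I_{\max}$ steps (since each is of order $\k\a(N)/N$)'' — is wrong. Lemma~\ref{lem:mubar} shows $\bar m\N_1-m^\mu_1$ and $\bar m\N_2-m^\mu_2$ each contain a \emph{deterministic} leading term of order $\a(N)/N$, not merely an error of order $\k\a(N)/N$; over $O(N/\a(N))$ steps this drift in $m_2/m_1$ accumulates to $O(1)$, so a fixed-$m$ induction of the form you propose does not close. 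Relatedly, your final paragraph conflates two different distributions: the \emph{activated-arm} distribution $\pi_t$, which indeed becomes asymptotically critical after $\Tgel$, and the \emph{degree} distribution $\mu_t$, which is what actually enters the definition of $\K\N_{c,m,M}$; the gap $m_2^{\mu_t}/m_1^{\mu_t}-1$ is bounded below by an $N$-independent positive constant on any compact $[0,T]$ (it degenerates only as $T\to\infty$), so the situation is not quite the one you describe.

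The paper handles this difficulty not by propagating $\rho_i\in\K\N_{c,m,M}$ inductively, but by a contradiction argument that sidesteps the drift bookkeeping entirely. Fix $m$ small enough that \eqref{eq:choicem} holds, let $J$ be the first index with $\rho_J\notin\K\N_{c,m,M}$, and work on the event of Lemma~\ref{lem:alwaystKN} so that $\rho_J$ is still in $\tK\N_{c,m,M}$. Since $\rho_{J-1}\in\K\N_{c,m,M}$, Lemma~\ref{lem:mubar} guarantees $\rho_J\in\K\N_{c,m/2,M}$ (a slightly relaxed set where Proposition~\ref{prop:atgel} still applies). Then, because $m_2^{\mu_J}/m_1^{\mu_J}\leq 1+m$ by definition of $J$, Proposition~\ref{prop:atgel} gives $\s\N(\rho_J)\geq -\log(1-1/(1+m))-\a(N)/N>T$ by the choice \eqref{eq:choicem}, forcing $t_I>T$. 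This shows that the event ``$\rho_J\notin\K\N_{c,m,M}$ with $J<I$ and $t_I\leq T$'' has probability $O(p_N)\to 0$, and the remaining good event admits your union bound. In short: the missing idea is to let $\rho_i$ \emph{leave} $\K\N_{c,m,M}$ and show that on the contrary event, if it does so before $T$, the next gelation necessarily lands past $T$ — rather than trying to show it never leaves.
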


\begin{proof}
Take $c$, $m$, $M$ as in Lemma \ref{lem:alwaystKN}. We may assume that $m$ is small enough such that
\begin{equation} \label{eq:choicem}
- \log \left ( 1 - \frac{1}{1+m} \right ) - \frac{\a(N)}{N} > T.
\end{equation}
Consider $\k = 1$ for instance, and the corresponding $K = K(c,m/2,M,\k)$ such that all the results of Section \ref{sec:gelDCM} hold. Note that we take the constant corresponding to $m/2$ to leave ourselves a bit of leeway, what we will use to study $F\N(T,2)$ below. All that we write from now on is implied to happen with probability at least $1 - K p_N$, where we recall that $p_N$ is defined in Equation \eqref{eq:pN}.

We want to prove that $\P(E\N(T)) \to 1$. Define
\[
J = \inf \{ i \geq 1, \rho_i \notin \K\N_{c,m,M} \},
\]
and the event
\[
E\N(T,1) = \left \{ \forall t \in [0,T] \; (N_t, \mu\N_t) \in \tK\N_{c,m,M} \right \}
\]
considered in Lemma \ref{lem:alwaystKN}. There are three ways things can go wrong, i.e. an outcome $\om$ is such that $t_I(\om) \leq T$:
\begin{itemize}
\item either $E\N(T,1)$ does not occur;
\item or $E\N(T,1)$ occurs, but $J < I$ and $t_I \leq T$;
\item or $E\N(T,1)$ occurs, $J \geq I$, but $K(i) \neq 1$ for some $i \leq I$.
\end{itemize}
Let us call these events respectively $F\N(T,1) = E\N(T,1)^{\complement}$, $F\N(T,2)$ and $F\N(T,3)$. By choice of the constants $c$, $m$, $M$ as in Lemma \ref{lem:alwaystKN}, we already know that
\[
\P(F\N(T,1)) \to 0.
\]

Let us now consider $F\N(T,2)$. In this case, since $t_J \leq t_I \leq T$ and $E\N(T,1)$ occurs, then $\rho_J = (n_J,\mu_J)$ is in $\tK\N_{c,m,M}$, but not in $\K\N_{c,m,M}$, i.e. it verifies $n_J \geq c N$, $\sum_{r \geq 1} \mu_J(r) r^{5 + \eta} \leq M$, $\fm^{\mu_J}_3 \geq m$, but $\fm_2^{\mu_J}/\fm_1^{\mu_J} \leq 1 + m$. Moreover $\rho_{J-1} \in \K\N_{c,m,M}$. But recall Lemma \ref{lem:mubar}. It easily implies that there is a constant $C$ such that, for any $\rho =(n,\mu) \in \K\N_{c,m,M}$,
\[
\frac{\bar{\fm}\N_1(\rho)}{\bar{\fm}\N_2(\rho)} \geq \frac{\fm^{\mu}_1}{\fm^{\mu}_2} \left ( 1 - C \frac{\a(N)}{N} \right ).
\]
Since $\fm^{\mu}_2 / \fm^{\mu}_1 \geq 1 + m$, then $\bar{\fm}\N_1(\rho) / \bar{\fm}\N_2(\rho) \geq 1 + m/2$ for $N$ large enough. Therefore, since $\rho_{J-1} \in \K\N_{c,m,M}$, then
\[
\frac{\fm_2^{\mu_J}}{\fm_1^{\mu_J}} = \frac{\bar{\fm}\N_1(\rho_J)}{ \bar{\fm}\N_2(\rho_J)} \geq 1 + m/2.
\]
Hence $\rho_J \in \K\N_{c,m/2,M}$. According to Proposition \ref{prop:atgel} (with $\k = 1$), the corresponding gelation time verifies
\[
\s\N(\rho_J) \geq - \log \left ( 1 - \frac{\fm_1^{\mu_J}}{\fm_2^{\mu_J}} \right ) - \frac{\a(N)}{N}.
\]
As we mentioned, this is our only reason to choose the constant $K$ corresponding to $m/2$, not $m$. Now, since $\fm_2^{\mu_J}/\fm_1^{\mu_J} \leq 1 + m$ as we just mentioned, then $\s\N(\rho_J) > T$ by the choice of $m$ as in \eqref{eq:choicem}, so that $t_I > T$. To conclude, $\P(F\N(T,2)) \leq 1 - K p_N \to 0$.

Finally, on $F\N(T,3)$, all the $\rho_i$ we consider are in $\K\N_{c,m,M}$. Note that $K(i) = 1$ means that $\cG\N_{\vs(\rho_i,D_i,k)}(\rho_i,1)$ has no large component. But
\[
\cG\N_{\vs(\rho_i,D_i,k)}(\rho_i,1) \eqlaw CM(\rho_i,D_i) = CM(\bar{\rho_{i-1}}\N,B\N(\rho_{i-1}))
\]
and we know by Proposition \ref{prop:trivcond} that this graph has no large component with probability greater than $1 - K p_N$. Since there are at most $N/\a(N)$ gelation events, and recalling the definition of $p_N$, we get
\[
P(F\N(T,3)) \leq K \frac{N}{\a(N)} p_N = K \left ( \left ( \frac{N^{4/5}}{\a(N)} \right )^{5/2} + \left ( \frac{\a(N)}{N} \right )^{\eta} \right ) \to 0
\]
by \eqref{eq:alpha}. The result follows.
\end{proof}

Notice that in the very last computation we crucially use \eqref{eq:alpha}. This is the main reason for our assumption, just as in \cite{MN0}, we needed to assume slightly more on the threshold $(\a(N))$. As a by-product of the above proof, namely the fact that $\P(F\N(T,2)) \to 0$, we see that all the $(n_t,\mu_t)$ that we consider before time $T$ are well-behaved, in that they are in $\K\N_{c,m,M}$. Coupled with Lemma \ref{lem:equivproc}, this allows to obtain the equivalence of our model and the alternative model on any compact interval, along with the good behavior of the relevant quantities.

\begin{theorem} \label{th:equiv}
If Assumptions \ref{as:mu5} and \ref{as:alpha} hold, then, for all $T > 0$,
\[
\P \left ( (G_S(t), t \in [0,T]) = (Z(t), t \in [0,T]) \right ) \to 1.
\]
Moreover, there exists $c, m, M \in (0,\pinf)$ such that
\[
\P \left ( \forall t \in [0,T] \;  (N_t,\mu_t) \in \K\N_{c,m,M} \right ) \to 1.
\]
\end{theorem}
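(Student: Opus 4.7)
The plan is to assemble the theorem as a direct corollary of Lemma~\ref{lem:equivproc} and Lemma~\ref{lem:equivasymp}, with the second assertion obtained by reading off the ingredients of the proof of the latter rather than by carrying out new estimates.

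For the first claim, I would use Lemma~\ref{lem:equivproc} to place $G_S$ and $Y$ on a common probability space so that $\tau_i = t_i$ for all $i \geq 0$ and $G_S(t) = Y(t)$ for every $t$. The construction of $Z$ is, step by step, the specialization of the construction of $Y$ obtained by forcing $K(i) = 1$ at every stage; hence on the event $E\N(T) = \{t_I > T\}$ one automatically has $Y(t) = Z(t)$ for all $t \in [0,T]$, and therefore $G_S(t) = Z(t)$ on this interval. The first assertion then follows from $\P(E\N(T)) \to 1$, which is exactly Lemma~\ref{lem:equivasymp}.

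For the second claim, I would choose $c, m, M$ as in Lemma~\ref{lem:alwaystKN}, shrinking $m$ if necessary so that \eqref{eq:choicem} is satisfied. The key observation is that $t \mapsto (N_t, \mu_t)$ is piecewise constant and only jumps at the gelation times $\tau_i$, since particles leave solution only at those times. It is therefore enough to prove that, with probability tending to one, $(N_{\tau_i}, \mu_{\tau_i}) \in \K\N_{c,m,M}$ for every $i$ with $\tau_i \leq T$. Under the coupling with $Y$ this translates into $\rho_i \in \K\N_{c,m,M}$ for all such $i$, and this is precisely the content of $\P(F\N(T,2)) \to 0$ already established inside the proof of Lemma~\ref{lem:equivasymp}: Lemma~\ref{lem:mubar} shows that the ratio $m_2/m_1$ is perturbed by only $O(\a(N)/N)$ across each gelation event, and the union bound over the at most $N/\a(N)$ gelation times in $[0,T]$ is controlled thanks to Assumption~\ref{as:alpha}, exactly as in the final computation of that proof.

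The argument is thus essentially one of assembly rather than genuinely new work. The only delicate point to verify is that, on the event $E\N(T,1) \cap F\N(T,2)^c$, the finitely many parameters $\rho_i$ arising before time $T$ all lie simultaneously in $\K\N_{c,m,M}$, so that the inductive bound transports uniformly to the whole interval $[0,T]$; no real obstacle is expected beyond careful bookkeeping on the constants $c,m,M$ and a check that the choice of $m$ forced by \eqref{eq:choicem} remains compatible with the one given by Lemma~\ref{lem:alwaystKN}.
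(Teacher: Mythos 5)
Your proposal is correct and follows exactly the paper's route: the theorem is indeed presented as an assembly of Lemma~\ref{lem:equivproc} and Lemma~\ref{lem:equivasymp}, with the second assertion read off from the $F\N(T,2)$ estimate inside the latter's proof. The only point worth flagging is that, as written, the argument for $\P(F\N(T,2)) \to 0$ actually shows $\rho_i \in \K\N_{c,m/2,M}$ for the gelation times in $[0,T]$ (via Lemma~\ref{lem:mubar}), so the constant in the theorem's second claim should be taken as $m/2$ rather than $m$; this is harmless since the statement is existential, and the paper glosses over it in the same way.
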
 

\section{Limit} \label{sec:limit}

This section puts together the results on the DCM and the alternative model to study precisely how our model evolves between gelation events. This culminates with the proof of the convergence of $p\N$, and the explicit characterization of its limit.

\subsection{Microscopic evolution}

Let us recall that $P_t(k,r)$ is the number of particles in solution at time $t$, with $k$ activated arms and degree $r$, and $\mu_t$ is the empirical distribution of degrees in solution at time $t$. Define
\[
\psi_t(x,y) = \frac1N \sum_{r \geq 0} \sum_{k=0}^r P_t(k,r) x^k y^r, \quad \phi_t(y) = \psi_t(1,y) = \frac{N_t}{N} G_{\mu_t}.
\]
Thanks to the alternative model and the results of Section \ref{sec:gelDCM}, we are able to describe precisely how $\psi_t$ and $\phi_t$ evolve at the microscopic scale. 

Recall that $(\tau_k)$ is the sequence of gelation times, and with a slight abuse of notation, denote $N_k = N_{\tau_k}$, $\mu_k = \mu_{\tau_k}$, $m_i^k = m_i^{\mu_k}$ and $\phi_k = \phi_{\tau_k}$. Note that all these quantities are constant on $[\tau_k,\tau_{k+1})$. We finally let $\psi_k = \psi_{\tau_{k+1}^-}$, that is, the left limit of $\psi$ at $\tau_{k+1}$.

In all this section, we will use freely the alternative model and Theorem \ref{th:equiv}. The most important point to remember is that, with high probability, our original model and the alternative model coincide before time $T$, all the $\th_k$ we consider before this time are in $\K\N_{c,m,M}$, and all the $K(i)$ are equal to 1. In particular, our model on the interval $[\tau_k,\tau_{k+1})$ corresponds to a DCM with parameter $\th_k$ on an interval $[\vs\N (\th_k,B_k,1),\s\N(\th_k,1))$. In all the proofs, $O ( \cdot )$ is implied to be independent the $k$ that we consider, and we imply that everything happens w.h.p.

\begin{lemma} \label{lem:evolution}
Assume that Assumptions \ref{as:mu5} and \ref{as:alpha} hold. Then, the following hold. For any $\k > 0$ and $T > 0$, there is a $K > 0$ such that, with high probability, for all $k \geq 1$ such that $\tau_k \leq T$, we have
\[
\frac{\a(N)}{N_k} \left ( \frac{m_3^{k-1}}{m_2^{k-1}(m_2^{k-1} - m_1^{k-1})} - \k \right ) \leq \tau_{k+1} - \tau_k \leq \frac{\a(N)}{N_k} \left ( \frac{m_3^{k-1}}{m_2^{k-1}(m_2^{k-1} - m_1^{k-1})} - \k \right )
\]
and
\[
\left \| \phi_{k+1} - \phi_k + \frac{\a(N)}{N_k} \frac{1}{m_1^{k-1}} y \phi_k'(y) \right \|_2 \leq \k \frac{\a(N)}{N}
\]
and
\[
\left \| \psi_k - \phi_k \left ( \left ( \frac{m_2^k}{m_1^k} x + 1 - \frac{m_2^k}{m_1^k} \right ) y \right ) \right \|_2 \leq K \left ( \frac{\a(N)}{N} \right ).
\]
In particular, for any $T > 0$, there are constants $d, D \in (0, \pinf)$ such that, with high probability,
\[
d \frac{\a(N)}{N} \leq \tau_{k+1} - \tau_k \leq D \frac{\a(N)}{N}
\]
for all $k \geq 1$ such that $\tau_k \leq T$. 
\end{lemma}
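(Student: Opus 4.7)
The plan is to invoke Theorem \ref{th:equiv} so as to replace our process by the alternative model on $[0,T]$ and then read off each of the three estimates, for every gelation index $k$ with $\tau_k\leq T$, by applying the appropriate result from Section \ref{sec:gelDCM} to the DCM parameters $\th_k=(N_k,\mu_k)$ governing the interval $[\tau_k,\tau_{k+1})$. With $c,m,M$ chosen as in Theorem \ref{th:equiv} and $\k>0$ fixed, on a high-probability event $\Omega_0$ we have $G_S\equiv Z$ on $[0,T]$ and every such $\th_k$ lies in $\K\N_{c,m,M}$. Each single application of Lemma \ref{lem:diffgeltimes}, Lemma \ref{lem:aftergel} or Proposition \ref{prop:atgel} below fails with probability at most $Kp_N$, and there are at most $N/\a(N)$ such indices, so the union bound yields total failure probability at most $K(N/\a(N))p_N$, which vanishes under Assumption \ref{as:alpha}. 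This is precisely the estimate carried out at the end of the proof of Lemma \ref{lem:equivasymp}.

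For the first estimate, the construction of the alternative model gives $\tau_{k+1}-\tau_k=\bar{\s}\N(\th_{k-1})-\vs(\th_k,B\N(\th_{k-1}))$, so Lemma \ref{lem:diffgeltimes} applied to $\th=\th_{k-1}$ directly yields
\[
\tau_{k+1}-\tau_k = \frac{\a(N)}{N_{k-1}}\,\frac{m_3^{k-1}}{m_2^{k-1}(m_2^{k-1}-m_1^{k-1})} + O\!\left(\k\,\frac{\a(N)}{N}\right),
\]
and since $N_{k-1}-N_k=O(\a(N))$ (Point 1 of Lemma \ref{lem:aftergel}), replacing $N_{k-1}$ by $N_k$ in the leading term introduces only a second-order error, absorbed in the $\k\a(N)/N$ precision.

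For the second and third estimates, observe that $\phi_{k+1}(y)=\Phi\N_{\th_k}(y)/N$ while $\phi_k(y)=(N_k/N)G_{\mu_k}(y)$, and that $\psi_k=\fG_{\th_k}(\s\N(\th_k))/N$, since $\psi$ is evaluated at the left limit, still including the cluster about to precipitate. Applying Point 3 of Lemma \ref{lem:aftergel} to $\th=\th_k$ in $\|\cdot\|_2$ and multiplying by $N_k/N$ yields
\[
\phi_{k+1}(y)-\phi_k(y) = -\frac{\a(N)}{N}\,\frac{1}{m_1^k}\,F_{\mu_k}(y) + O\!\left(\k\,\frac{\a(N)}{N}\right),
\]
which rewrites as the stated inequality after using $y\phi_k'(y)=(N_k/N)F_{\mu_k}(y)$ and $m_1^k-m_1^{k-1}=O(\a(N)/N)$ (from Lemma \ref{lem:mubar}), the two indexings being interchangeable to the required precision. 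Similarly, Point 2 of Proposition \ref{prop:atgel} applied to $\th=\th_k$ gives $\fG_{\th_k}(\s\N(\th_k))/N_k = G_{\mu_k}(q_{\mu_k})+O(\a(N)/N)$ in $\|\cdot\|_2$, whence $\psi_k=\phi_k(q_{\mu_k})+O(\a(N)/N)$, the natural argument being $q_{\mu_k}(x,y)=(m_1^k/m_2^k\,x+1-m_1^k/m_2^k)\,y$ from Section \ref{sec:percolres}.

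Finally, the uniform bounds $d\a(N)/N\leq\tau_{k+1}-\tau_k\leq D\a(N)/N$ follow at once from the first estimate: on $\Omega_0$, $\th_{k-1}\in\K\N_{c,m,M}$ forces $m\leq m_3^{k-1}$, $m_2^{k-1}\leq M$, $m_2^{k-1}/m_1^{k-1}\geq 1+m$, and a quick Cauchy--Schwarz argument using the fifth-moment bound gives $m_1^{k-1}\geq (m_3^{k-1})^2/M\geq m^2/M>0$, pinning the ratio $m_3^{k-1}/[m_2^{k-1}(m_2^{k-1}-m_1^{k-1})]$ inside a fixed compact subset of $(0,\infty)$. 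The only delicate ingredient of the whole argument is the union bound described in the first paragraph: the individual failure probabilities $Kp_N$ from Section \ref{sec:gelDCM} must be summed over $O(N/\a(N))$ indices, and the resulting $K(N/\a(N))p_N$ tends to $0$ precisely because of the $\a(N)\gg N^{4/5}$ hypothesis in Assumption \ref{as:alpha}.
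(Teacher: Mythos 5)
Your proof is correct and follows essentially the same route as the paper's: pass to the alternative model via Theorem~\ref{th:equiv}, apply Lemma~\ref{lem:diffgeltimes} and Proposition~\ref{prop:atgel} to $\th_{k-1}$ and $\th_k$ between consecutive gelation times, and close with the union bound over $O(N/\a(N))$ events, which converges precisely under Assumption~\ref{as:alpha}. The one genuine (and harmless) divergence is in the second estimate, where you read $\phi_{k+1}$ directly off Point~3 of Lemma~\ref{lem:aftergel} applied to $\th_k$ rather than passing through Lemma~\ref{lem:mubar} for $\bar G\N_{\th_k}$ and then multiplying by $N_{k+1}/N$ as the paper does; your route is marginally cleaner since it sidesteps the bookkeeping between the $N_{k+1}/N$ prefactor and the $O(\a(N)/n)$ correction inside $\bar G\N_{\th_k}$.

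Two small things worth noting, both of which your proof implicitly resolves correctly. First, the two sides of the first displayed inequality in the statement are identical; the right-hand side should read $+\k$, and your derivation via Lemma~\ref{lem:diffgeltimes} gives the intended two-sided bound. Second, your computation yields $\psi_k \approx \phi_k(q_{\mu_k})$ with $q_{\mu_k}(x,y)=\bigl(\tfrac{m_1^k}{m_2^k}x+1-\tfrac{m_1^k}{m_2^k}\bigr)y$, whereas the statement displays the ratio $m_2^k/m_1^k$. Your version is the correct one: it matches the definition of $q_\mu$ in Section~\ref{sec:gelDCM}, and it is the only one compatible with the final formula $\psi_t(x,y)=G_\mu\bigl[((Q(t)-e^{-t})x+e^{-t})y\bigr]$ in Theorem~\ref{th:limit} once one substitutes $\phi_t(z)=G_\mu(zQ(t))$ and uses the defining relation $(Q-e^{-t})G_\mu''(Q)=G_\mu'(Q)$; the ratio in the statement (and the corresponding $\phi_t''(1)/\phi_t'(1)$ in Proposition~\ref{prop:PDE}) should be inverted. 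You should simply flag this rather than silently identify the argument of $\phi_k$ with $q_{\mu_k}$.
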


\begin{proof}
A first consequence of the alternative model is that we have
\[
\tau_{k+1} - \tau_k = \s\N(\th_k) - \vs\N (\th_k,B_k) = \s\N(\bar{\th}_{k-1}\N) - \vs\N  \left ( \bar{\th}_{k-1}\N,B\N(\bar{\th}_{k-1}\N) \right ),
\]
and from Lemma \ref{lem:diffgeltimes}, this is
\[
\frac{\a(N)}{N_k} \left ( \frac{m_3^{k-1}}{m_2^{k-1}(m_2^{k-1} - m_1^{k-1})} \right ) + \k O \left ( \frac{\a(N)}{N} \right ).
\]

The second consequence is that
\[
\phi_{k+1} = \frac{N_{k+1}}{N} \bar{G}\N_{\mu_k}
\]
and Lemma \ref{lem:mubar} shows that this is
\[
\frac{N_{k+1}}{N} \left ( G_{\mu_k} - \frac{\a(N)}{N_k} \left ( G_{\mu_k} - \frac{1}{m_1^k} F_{\mu_k} + \k O(1) \right ) \right ).
\]
By Lemma \ref{lem:aftergel}, $N_{k+1} = N_k - \a(N) + \k O(1)$, and it is then an easy computation to conclude.

As a third consequence, we obtain
\[
\psi_{\tau_{k+1}^-} = \frac{1}{N} \fG_{\th_k} \left ( \s\N(\th_k) \right ).
\]
According to Proposition \ref{prop:atgel}, and this is
\[
G_{\mu_k} (q_{\mu_k}) + O \left ( \frac{\a(N)}{N} \right ) = \phi_k \left ( \left ( \frac{m_2^k}{m_1^k} x + 1 - \frac{m_2^k}{m_1^k} \right ) y \right ) + O \left ( \frac{\a(N)}{N} \right ).
\]
Since there are less than $N/\a(N)$ gelation events, these relations are true for all $k$ with $\tau_k \leq T$, with probability at least $1 - K N p_N / \a(N)$, and as we saw, this tends to one because of \eqref{eq:alpha}.

The last part of the result is a direct consequence of the first inequality and the fact that all the $\th_k$ that we consider are in $\K\N_{c,m,M}$ w.h.p.
\end{proof}

\subsection{Tightness and limits}

The result above allows to describe precisely the evolution of $(\tau_k)$, $(\phi_k)$ and $(\psi_k)$. It is therefore not surprising that it allows to describe limits. Classically, we first prove tightness, then show that subsequential limits solve an equation, and finally solve this equation. Recall that the space $C^k$ is the space of $k$ times continuously differentiable functions on $[0,1]$ or $[0,1]^2$ (the context should make obvious which one) endowed with the $\| \cdot \|_k$ norm defined in \eqref{eq:Cknorm}.

\begin{lemma} \label{lem:tightness}
Assume that Assumptions \ref{as:mu5} and \ref{as:alpha} hold. Then the sequence $(n\N)$ is tight in $\D(\R^+,\R)$, and any limiting point is locally Lipschitz-continuous. Moreover, the sequence $(\phi\N)$ is tight in $\D(\R^+,C^5)$, and any limit point is continuous.
\end{lemma}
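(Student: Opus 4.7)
The plan is to combine the microscopic increment bounds of Lemma \ref{lem:evolution} with the uniform high-order regularity afforded by Assumption \ref{as:mu5}. Throughout, every statement will hold on the high-probability event of Theorem \ref{th:equiv} and Lemma \ref{lem:evolution}: there, our original process coincides with the alternative model, and every $\th_k\N$ we consider before time $T$ lies in some $\K\N_{c,m,M}$.

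For $(n\N)$, I would use that $n\N$ is piecewise constant and drops at each $\tau_{k+1}\N$ by $s\N(\th_k\N)/N \in [\a(N)/N,(1+\kappa)\a(N)/N]$, by Point 3 of Proposition \ref{prop:atgel}. Since Lemma \ref{lem:evolution} provides $\tau_{k+1}\N - \tau_k\N \geq d\,\a(N)/N$ w.h.p., the number of gelation events in any $[s,t] \subset [0,T]$ is at most $(t-s)/(d\a(N)/N) + 1$. Hence
\[
0 \leq n\N_s - n\N_t \leq \tfrac{1+\kappa}{d}(t-s) + (1+\kappa)\tfrac{\a(N)}{N}
\]
w.h.p. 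Combined with $n\N \in [0,1]$, this immediately gives tightness in $\D(\R^+,\R)$ together with the locally Lipschitz regularity of every subsequential limit.

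For $(\phi\N)$, the first step is compact containment in $C^5$. Since particles only leave solution, $0 \leq p\N_t(r) \leq \mu\N(r)$ for all $r,t,N$, so for every $y \in [0,1]$ and $k \leq 5$,
\[
\bigl|\phi\N_t^{(k)}(y)\bigr| \leq \sum_{r \geq 0} r^k \mu\N(r) \leq C_k
\]
uniformly, by Assumption \ref{as:mu5}. Using the elementary estimate $|y_1^{r-5}-y_2^{r-5}| \leq 2^{1-\eta}(r-5)^{\eta}|y_1-y_2|^\eta$ yields a uniform H\"older bound $[\phi\N_t^{(5)}]_{C^{0,\eta}} \leq C \sum_r r^{5+\eta}\mu\N(r) \leq C'$. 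Therefore $\{\phi\N_t : t \geq 0, N \geq 1\}$ is bounded in $C^{5,\eta}$, hence relatively compact in $C^5$ by Arzel\`a--Ascoli. The second step is the modulus of continuity. The second estimate of Lemma \ref{lem:evolution} shows that each jump of $\phi\N$ at a gelation event is of order $\a(N)/N$ in the $\|\cdot\|_2$ norm; summing over the $O((t-s)N/\a(N))$ such events in $[s,t] \subset [0,T]$ yields $\|\phi\N_t - \phi\N_s\|_2 \leq C(t-s) + C\a(N)/N$ w.h.p. To upgrade this to $C^5$, I would apply the standard interpolation
\[
\|f\|_{C^5} \leq C\,\|f\|_{C^0}^{\eta/(5+\eta)}\,\|f\|_{C^{5,\eta}}^{5/(5+\eta)}
\]
to $f = \phi\N_t - \phi\N_s$, using the uniform $C^{5,\eta}$ bound from the previous step and $\|f\|_{C^0} \leq \|f\|_2$. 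The result is
\[
\|\phi\N_t - \phi\N_s\|_{C^5} \leq C\bigl[(t-s) + \a(N)/N\bigr]^{\eta/(5+\eta)},
\]
valid w.h.p. This modulus, together with the compact containment, yields tightness of $(\phi\N)$ in $\D(\R^+, C^5)$, and the vanishing of the right-hand side as $t \to s$ and $N \to \infty$ forces every subsequential limit to be continuous.

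The hard part is bridging the gap between the $\|\cdot\|_2$ control on the jumps given by Lemma \ref{lem:evolution} and Proposition \ref{prop:atgel} (which reflects the fact that the underlying random-graph estimates of Section \ref{sec:JL} are of $C^2$ precision) and the target $C^5$ topology. This is exactly where the extra moment in Assumption \ref{as:mu5} (compared to Assumption \ref{as:mu4}) is essential: it provides the uniform H\"older control on $\phi\N_t^{(5)}$ needed to convert the weak-norm in-time modulus into a $C^5$ modulus via interpolation. A secondary bookkeeping point is that one must invoke Lemma \ref{lem:evolution} simultaneously over all the $O(N/\a(N))$ gelation events in $[0,T]$; this is precisely the reason for Assumption \ref{as:alpha}, which ensures that $(N/\a(N))\,p_N \to 0$.
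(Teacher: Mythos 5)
Your proof is correct, and it leans on exactly the same ingredients as the paper's (the high-probability coupling with the alternative model, the microscopic increment bounds of Lemma~\ref{lem:evolution}, and the uniform $(5+\eta)$-moment bound of Assumption~\ref{as:mu5}), but it uses a genuinely different technique to pass from the low-order modulus to a $C^5$ modulus. The paper works at the level of sequences: it first obtains $C^0$-tightness of $\phi\N$ (equivalently $\ell^1$-tightness of the degree distributions, using the monotonicity $p\N_t(r) \leq p\N_s(r)$ for $t\geq s$), then invokes the preliminary Lemma~\ref{lem:strongconvproc} --- a sequence-space interpolation via H\"older's inequality between $\ell^1$ and $\ell^1_{5+\eta}$ --- to upgrade to $\ell^1_5$-tightness, which translates back to $C^5$-tightness of the generating functions. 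You instead stay on the generating-function side: you derive a uniform $C^{5,\eta}$ bound on $\phi\N_t$ directly from $p\N_t(r)\leq\mu\N(r)$ and the moment assumption, and then use a H\"older-space interpolation inequality to convert the $C^0$ (or $C^2$) modulus of continuity into a $C^5$ one. These are the same underlying interpolation idea phrased in dual languages, and both exploit the $5+\eta$ moments; your version is self-contained with respect to Section~\ref{sec:prelim}, at the cost of importing a (standard) functional-analytic tool. Two small remarks: you should note that one may assume $\eta\in(0,1)$ without loss of generality (so that $C^{5,\eta}$ is a genuine H\"older space and your pointwise estimate $|y_1^m - y_2^m|\leq m^\eta|y_1-y_2|^\eta$ is valid); and your interpolation only uses $\|\phi\N_t-\phi\N_s\|_{C^0}$, so invoking the full $\|\cdot\|_2$ control of Lemma~\ref{lem:evolution} is slightly more than you need.
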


\begin{proof}
Concerning $(n\N)$ and $(\phi\N)$, this can be done exactly as in \cite{MN0}, using the classical criterion of \cite{BillingsleyCPM}. From the previous result, the number of gelation events on an interval of size $s$ is at most $\a(N)/N s/d$, and each makes $n\N$ (resp. $\phi\N$ in $\| \cdot \|_0$ norm) change by at most $2 \a(N)/N$, see \cite{MN0} for details.

Note that this only shows tightness of $\phi\N$ in $\D(\R^+,C^0)$. However, this implies the tightness of $(\mu\N)$ in $\D(\R^+,\ell^1)$, so in $\ell^1_5(\bN)$ thanks to Lemma \ref{lem:strongconvproc}, and this, along with the tightness of $(n\N)$, implies in turn tightness of $\phi\N$ in $\D(\R^+,C^5)$.
\end{proof}

\begin{prop} \label{prop:PDE}
Assume that Assumptions \ref{as:mu5} and \ref{as:alpha} hold. Then any limit point $(n,\phi)$ of $(n\N,\phi\N)$ in $\D(\R^+,\R) \times \D(\R^+,C^5)$ verifies
\[
n_t = 1, \quad \phi_t = G_{\mu}, \quad t \leq \Tgel,
\]
and
\[
\dfdt n_t = - n_t s(t), \quad \dfdt \phi_t = - n_t s(t) \frac{1}{\phi_t'(1)} y \phi_t'(y), \quad t \geq \Tgel,
\]
where
\[
s(t) = \frac{1}{n_t} \frac{\phi_t''(1) (\phi_t''(1) - \phi_t'(1))}{\phi_t''(1)}.
\]
Moreover, whenever $(\phi\N)$ converges on a subsequence in $\D(\R^+,C^5)$, so does $(\psi\N)$, and its limit $\psi$ then verifies
\[
\psi_t (x,y) = \phi_t \left ( \left ( \frac{\phi_t''(1)}{\phi_t'(1)} x + 1 - \frac{\phi_t''(1)}{\phi_t'(1)} \right ) y \right ), \quad x, y \in [0,1].
\]
\end{prop}

\begin{proof}
The part of the result before $\Tgel$ is clear since we know that w.h.p., no gelation event has occurred before $\Tgel$ (thanks e.g. to \eqref{eq:limTgel}), and therefore $n_t$ and $\phi_t$ are constant on $[0,\Tgel)$. We extend to $\Tgel$ by continuity. The formula for $\phi_t$ is then a consequence of Lemma \ref{lem:charactgraph}.

For the part after $\Tgel$, fix $\Tgel < t < t + s$, and define
\[
r\N_-(t,t+s) = \inf_{u \in [t,t+s]} \frac{m_3^{\mu_u}}{m_2^{\mu_u}(m_2^{\mu_u} - m_1^{\mu_u})}, \quad r\N_+(t,t+s) = \sup_{u \in [t,t+s]} \frac{m_3^{\mu_u}}{m_2^{\mu_u}(m_2^{\mu_u} - m_1^{\mu_u})}.
\]
All the following events will happen w.h.p. First, at least one gelation event has happened by time $t$ by \eqref{eq:limTgel}. Moreover, by Lemma \ref{lem:evolution} and by monotonicity of $(N_t)$, we have, if $t < \tau_i < t+s$, 
\[
\frac{\a(N)}{N_t} \left ( r\N_-(t,t+s) - \k \right ) \leq \tau_{i+1} - \tau_i \leq \frac{\a(N)}{N_{t+s}} \left ( r\N_+(t,t+s) + \k \right ).
\]
The number of gelation events on $[t,t+s]$ is thus at least
\[
\frac{N_{t+s}}{\a(N)} \frac{1}{r\N_+(t,t+s) + \k}
\]
and at most
\[
1 + \frac{N_t}{\a(N)} \frac{1}{r\N_-(t,t+s) - \k}.
\]
By Proposition \ref{prop:atgel}, a gelation event makes at most $(1+\k) \a(N)$ particles falls into the gel. Consequently
\[
- \frac{N_{t+s}}{N} \frac{1}{r\N_+(t,t+s) + \k} \leq n\N_{t+s} - n\N_t \leq - \left ( \frac{\a(N)}{N} + \frac{N_t}{N} \frac{1}{r\N_-(t,t+s) - \k} \right ) (1 + \k).
\]
But $\phi\N \to \phi$ in $\D(\R^+,C^5)$, and $\phi$ is continuous (in time), so there is actually uniform converge on the compact sets (see \cite{BillingsleyCPM}). Therefore
\[
r\N_-(t,t+s) \to \inf_{u \in [t,t+s]} 1/s(u), \quad r\N_+(t,t+s) \to \sup_{u \in [t,t+s]} 1/s(u)
\]
and, passing to the limit in the equality above then shows that
\[
- n_{t+s} \frac{1}{\sup_{u \in [t,t+s]} 1/s(u) + \k} \leq n_{t+s} - n_t \leq - n_t \frac{1}{\inf_{u \in [t,t+s]} 1/s(u) - \k} (1 + \k).
\]
Having $\k \to 0$, then $s \to 0$ and using continuity allows to conclude. The computation for $\phi$ is similar.

Concerning $(\psi\N)$, assume that $(\phi\N)$ converges along some subsequence to some $\phi$. For $t \geq 0$, take $k\N(t)$ the $k$ such that $t \in [\tau\N_k,\tau\N_{k+1})$. Clearly, $\psi$ is an increasing function in time on each interval $[\tau\N_k,\tau\N_{k+1})$, and cannot change by more than $\a(N)/N$ (in $\| \cdot \|_0$ sense) at each gelation event. Therefore, for all $t \geq 0$,
\begin{equation} \label{eq:psitaukNt}
\psi_{k\N(t)-1} - 2 \frac{\a(N)}{N} \leq \psi_{\tau_k\N(t)} \leq \psi_t \leq \psi_{k\N(t)}.
\end{equation}
By Lemma \ref{lem:evolution}, the RHS is
\begin{equation} \label{eq:phikNt}
\begin{split}
\psi_{k\N(t)}(x,y) & = \phi_{k\N(t)} \left ( \left ( \frac{m_2^{k\N(t)}}{m_1^{k\N(t)}} x + 1 - \frac{m_2^{k\N(t)}}{m_1^{k\N(t)}} \right ) y \right ) + O \left ( \frac{\a(N)}{N} \right ) \\
& = \left ( \left ( \frac{\phi''_{k\N(t)}(1)}{\phi'_{k\N(t)}(1)} x + 1 - \frac{\phi''_{k\N(t)}(1)}{\phi'_{k\N(t)}(1)} \right ) y \right ) + O \left ( \frac{\a(N)}{N} \right )
\end{split}
\end{equation}
and a similar formula for the LHS. Again by uniform convergence on compact sets, we have $\phi\N_{k\N(t)} = \phi\N_t \to \phi_t$. On the other hand, $\tau_{k\N(t)} - \tau_{k\N(t) - 1} \to 0$ by Lemma \ref{lem:evolution}, so that $\phi\N_{k\N(t) - 1} \to \phi_t$ for the same reason. Passing to the limit in \eqref{eq:psitaukNt} and \eqref{eq:phikNt} shows the result. This actually just shows pointwise convergence, but \eqref{eq:psitaukNt} and the lower bound on $\tau_{k+1} - \tau_k$ of Lemma \ref{lem:evolution} easily ensure that the variations of $\psi$ are of order $s$ on intervals of size $s$, what allows to conclude classically to tightness, as in Lemma \ref{lem:tightness}.
\end{proof}

We can finally conclude to the convergence of all the relevant quantities by solving the previous equation.

\begin{proof}[Proof of Theorem \ref{th:limit}]
First, assume that $u_t(x)$ is a differentiable function that verifies
\begin{equation} \label{eq:dut}
\rmd u_t(x) = - x q(t) u_t'(x), \quad t \geq T, \quad x \in [0,1],
\end{equation}
where $q$ is a continuous function. This can be solved by method of characteristics. Indeed, define $Q(t) = \exp - \int_T^t q(s) \ds$. Then it is easy to check that $u_t(x/Q(t))$ is constant, and therefore equal to $u_T(x)$, and thus \eqref{eq:dut} has a unique solution given by
\begin{equation} \label{eq:solut}
u_t(x) = u_T(x Q(t)).
\end{equation}

Now, let us define $u_t(x) = \phi_t'(x)/(x \phi_t''(x))$. On the one hand, we have
\begin{equation} \label{eq:utprime}
u_t'(x) = \frac1x - \frac{\phi_t'(x)}{x \phi_t''(x)} - \frac{\phi_t'(x) \phi_t'''(x)}{x^2 \phi_t''(x)^2}.
\end{equation}
Let us define $q(t) = - n_t s(t)/\phi_t'(1)$. A simple computation shows that
\begin{equation} \label{eq:q}
q(t) = \frac{u_t(1) - 1}{u_t'(1) + u_t(1) - 1}.
\end{equation}
But on the other hand, using the PDE solved by $\phi$ of Proposition \ref{prop:PDE} for $T = \Tgel$, it is easy to check that $u$ solves \eqref{eq:dut}, so that $u_t = u_T(x Q(t))$. Plugging this in \eqref{eq:q}, we see that $Q$ verifies
\[
q(t) = - \frac{Q'(t)}{Q(t)} = \frac{u_T(Q(t)) - 1}{Q(t) u_T'(Q(t)) + u_T(Q(t)) - 1}.
\]
This shows that $R(t) = Q(t) (1 - u_T(Q(t)))$ verifies
\begin{equation} \label{eq:R}
R'(t) = - R(t), \quad t \geq \Tgel.
\end{equation}
But, by continuity, $\phi_T = G_{\mu}$, so that
\[
u_T(x) = \frac{G'_{\mu}(x)}{x G''_{\mu}(x)} = \frac{G_{\nu}(x)}{x G'_{\nu}(x)}.
\]
Therefore
\[
R(T) = Q(T) (1 - u_T(Q(T))) = - u_T(1) = - \frac{m_2^{\mu}}{m_1^{\mu}},
\]
which, with \eqref{eq:R} shows that $R(t) = e^{-t}$ for $t \geq \Tgel$. By definition of $R$, this is just saying that $Q(t)$ solves \eqref{eq:Q}.

Finally, since $\phi$ also solves \eqref{eq:dut}, we get from \eqref{eq:solut} that
\begin{equation} \label{eq:phit}
\phi_t(x) = \phi_T(x Q(t)) = G_{\mu}(x Q(t)),
\end{equation}
whence we deduce the solution for $n_t = \phi_t(1)$ and $\psi_t$ given in the previous result.
\end{proof}

\section{Conclusion} \label{sec:conclusion}

\subsection{Typical clusters} \label{sec:lastres}

With all these tools in hand, proving Theorem \ref{th:typcluster} is an easy task. According to Theorem \ref{th:equiv}, it suffices to do it for the alternative model. Recall that, at each time, the alternative model $(Z(t))$ is the realization of a DCM at some time. In part, for each $t$, $Z(t)$ is a CM with some random parameters $M_t$ and $\l\N_t$. By Theorems \ref{th:limit} and \ref{th:equiv}, we have
\[
\frac{M_t}{N} \to n_t, \quad \l\N_t \to \pi_t.
\]
The second convergence holds in $\ell^1_5$. Hence, $(\l\N_t)$ verifies the assumptions of Proposition \ref{prop:locconv} with limit $\pi_t$. Proposition \ref{prop:locconv} then applies for $Z(t)$, conditionally on $M_t$ and $\l\N_t$, and it then suffices to de-condition. To be precise, one can use Skorokhod representation and prove the result as \eqref{eq:typcluster}. The easy details are left to the reader.

\subsection{Limiting concentrations} \label{sec:limconc}

Let us now explain Formula \eqref{eq:cinf}. We assume that we are in the supercritical case $m_2^{\mu} > m_1^{\mu}$. The reasoning is again done as in Section \ref{sec:locconv}, and we consider the same notation.

First, note that $Q(t)$ lives in $[0,1]$, and that any limit point $c$ has to solve \eqref{eq:c}. This equation has a unique solution, and thus $Q(t) \to x$ as $t \to \pinf$. Hence, taking limits in \eqref{eq:pit}, we see that $\pi_t \to \rho$, where $\rho$ is a probability with generating function
\[
G_{\rho}(x) = \frac{G_{\mu} (c x)}{G_{\mu}(c)}.
\]

Similarly to the reasoning done in Section \ref{sec:locconv}, we have
\[
\cinf\N(0,m) = \frac1m \Q \left ( v(\Ctyp\N(\infty)) = m \right ).
\]
According to Theorem \ref{th:typcluster}, $\Ctyp\N(\infty)$ should be a $\GW_{\rho,\hat{\rho}}$. This is of course not rigorously true since we only have convergence on compacts sets. Notwithstanding,
\[
\lim_{N \to \pinf} \Q \left ( v(\Ctyp\N(\infty)) = m \right ) = \Q  \left ( \GW_{\rho,\hat{\rho}} = m \right ) = \frac{1}{m-1} \b^{m-1} \nu^{*m}(m-2),
\]
where, again, the last equality is an application of Dwass' formula \cite{Dwass}.

This finally explains the appearance of the term $\b$ in Formula \eqref{eq:cinf}, compared to Formula \eqref{eq:cinf0}. In the latter case, there is no gelation, and typical clusters in the final state are subcritical Galton-Watson trees. In the former case, there is gelation, and typical clusters in the final state are critical Galton-Watson trees $\GW_{\rho,\hat{\rho}}$. Their reproduction law is obtained from the original subcritical law $\mu$ by an exponential tilt, which turns it into a critical law.

\subsection{Typical particles in solution}

Theorems \ref{th:limit} also shows how typical particles in solution evolve. Consider a particle $p$ uniformly chosen at some time $t \geq \Tgel$. Then, in the asymptotics $N \to \infty$, 
\begin{itemize} 
\item $p$ is still in solution with probability $n_t = G_{\mu}(Q(t))$;
\item $p$ is in solution and has degree $n$ with probability $\mu(n) Q(t)^n$ (the coefficient of $y^n$ in $\psi_t(1,y)$); 
\item knowing that $p$ has degree $n$, it is still in solution at time $t$ with probability $Q(t)^n$.
\end{itemize}

The last point suggests that a form of asymptotic independence
holds throughout the whole process between the distinct arms of $p$. Everything indeed happens as if each of these arms causes, independently of the others, the fall of $p$ into the gel by time $t$ with probability $1-Q(t)$. Another way to see this is that the growth of the different subclusters attached to each arm of a given  particle most likely do not interfere with one another, so that the particle falls into the gel when one (and only one) of this clusters reaches size about $\a(N)$.

\subsection{Particular cases}

\subsubsection{Poisson distribution}

Assume that $\mu$ is a Poisson distribution $\cP(\l)$ with parameter $\l > 1$.  Then one readily obtains
\[
\Tgel = - \log \left ( 1 - \frac{1}{\l} \right ).
\]
Moreover, $\nu = \mu$, and for $t \geq \Tgel$, $Q(t) = e^{-t} + 1/\l$,
\[
n_t = \exp \left ( 1 - \l + \l e^{-t} \right ),
\]
and $\pi_t = \rho := \cP(1)$. Hence, in this case, the distribution in solution is essentially that of a configuration model on $n_t N$ particles with degree distribution $\rho$. A typical cluster in solution is a $\GW_{\rho}$, and has thus a stationary distribution. This should not be surprising, since the configuration model with a $\cP(\l)$ distribution corresponds to the Erd\H{o}s-R\'enyi graph with parameter $\l$. This agrees with the results of \cite{MN0}, where typical clusters in solution are such $\GW_{\rho}$ trees post-gelation.

\subsubsection{Fixed degree}

Assume now $\mu = \dl_d$ for some $d \geq 3$, that is, all particles initially have degree $d$. Then
\[
\Tgel = - \log \frac{d-2}{d-1}.
\]
Moreover, $\nu = \dl_{d-1}$, and for $t \geq \Tgel$, $Q(t) = e^{-t} (d-1)/(d-2)$,
\[
n_t = \left ( \frac{d-1}{d-2} e^{-t} \right )^d,
\]
and $\pi_t = \rho := \mathrm{Bin}(d,1/(d-1))$. Finally, $\hat{\rho} = \mathrm{Bin}(d-1,1/(d-1))$. In this case, the distribution in solution is essentially that of a configuration model on $n_t N$ particles with degree distribution $\rho$. A typical cluster in solution is thus a $\GW_{\rho,\hat{\rho}}$, and is again stationary.

Notice that in this case, $n_t \to 0$, which makes sense since there are no particles with degree 1, so that, in some sense, clusters cannot ``close'' by attaching such particles at their extremity.

\subsubsection{Binomial distribution}

Assume finally that $\mu = \mathrm{Bin}(d,p)$ for $d \geq 3$ and $p > 1 / (d-1)$. Then
\[
\Tgel = - \log \left ( 1 - \frac{1}{(d-1) p}\right )
\]
and $\nu = \mathrm{Bin}(d - 1,p)$. The computations are slightly more tedious in this case, but it is nonetheless easy to obtain that, for $t \geq \Tgel$,
\[
n_t = \left ( \frac{d-1}{d-2} (1-p+pe^{-t}) \right )^d,
\]
as well as $\pi_t = \rho := \mathrm{Bin}(d,1/(d-1))$ and $\hat{\rho} = \mathrm{Bin}(d-1,1/(d-1))$.

We therefore have the same typical clusters in solution $\GW_{\rho,\hat{\rho}}$ as in the previous case. This is easy to explain. Note indeed that, before gelation, the number of activated of the particles has distribution $\mathrm{Bin}(d,(1- e^{-t}))$ in the previous case, and $\mathrm{Bin}(d,p(1- e^{-t}))$ in the current one. At gelation, typical clusters are therefore $\GW_{\rho,\hat{\rho}}$ in both cases, and then the evolutions are similar. There is however a non-trivial time-change due to the difference in degrees. In the binomial case, there is moreover a positive proportion of particles with one arm, and thus there remains a positive mass at the end. In the final state, we essentially just observe a realization of a $CM(n_{\infty}N, \rho)$, whose typical clusters are $\GW_{\rho,\hat{\rho}}$ trees.

All these examples show a stationary distribution for the typical clusters. However, this distribution is not stationary after gelation for any other example that we tried, but the computations are not as nice and do not make well-known distributions appear.

\appendix

\section{Index of notation} \label{ap:indexnot}

\paragraph{Sets}

\begin{itemize}
\item $\bN = \{0,1,\dots\}$;
\item $[N] = \{1,2,\dots,N\}$;
\item $\R^+ = [0,\pinf)$.
\end{itemize}

\paragraph{Measures}

\begin{itemize}
\item $\cM_1$: probability measures on $\bN$;
\item $\cM_1\N$: probability measures $\mu$ on $\bN$ such that $N \mu(k) \in \bN$ for all $k \in \bN$;
\item for $\mu \in \cM_1$,
\[
G_{\mu}(y) = \sum_{r \geq 0} \mu(r) y^r, \quad y \in [0,1],
\]
is the generating function of $\mu$;
\item for $i \in \bN$,
\[
m_i^{\mu} = G_{\mu}^{(i)}(1)
\]
are the factorial moments of $\mu$;
\item $F_{\mu}(x) = x G_{\mu}'(x)$;
\item for $\mu \in \cM_1$ with $m_2^{\mu} > 0$,
\[
q_{\mu}(x,y) = \left ( \frac{m^{\mu}_1}{m^{\mu}_2} x + 1 - \frac{m^{\mu}_1}{m^{\mu}_2} \right ) y;
\]
\item for $\mu \neq 0$,
\[
\hat{\mu}(k) = \frac{(k+1)\mu(k+1)}{\sum_{i \geq 1} i \mu(i)}, \quad k \in \bN.
\]
\end{itemize}

\paragraph{Topologies}

\begin{itemize}
\item $\D(\R^+,S)$: c\`adl\`ag processes with values in a Polish space $S$, endowed with the Skorokhod topology (see \cite{BillingsleyCPM,Kallenberg});
\item for $\b > 1$, $\ell^1_{\b}(\bN^d)$: the space of sequences in $\bN^d$ with finite $\| \cdot \|_{\b}$-norm, where
\[
\| u \|_{\b} = \sum_{(k_1,\dots,k_d) \in \bN^d} (1 + k_1^{\b} + \dots + k_d^{\b}) | u(k,r) |;
\]
\item for $k \in \bN$, $C^k$: space of $k$ times continuously differentiable functions on $[0,1]$ or $[0,1]^2$, endowed with the norm
\[
\| f \|_{\g} = \sum_{i=0}^{k} \sup_{x \in [0,1]} \left | D^i f (x) \right |.
\]
\end{itemize}

\paragraph{Smoluchowski equation}

We write $S = \bN \times \bN^*$ and for $p=(a,m), p'=(a',m') \in S$,
\begin{itemize}
\item $p \cdot p' = a a'$;
\item $p \circ p' = (a+a'-2, m+m')$;
\item $p' \lesssim p$ if $a' \leq a+1$ and $m' \leq m-1$;
\item if $p' \lesssim p$, $p \bsl p' = (a+2-a', m-m')$.
\end{itemize}
For two nonnegative families $(c(p), p \in S)$, $(f(p),p \in S)$, \[
\la c, f \ra = \sum_{p \in S} c(p) f(p).
\]
With a slight abuse of notation, $\la c, a \ra = \la c, f \ra$ for $f(a,m) = a$, and so on.

\paragraph{Discrete model}

\begin{itemize}
\item $N_t$: number of particles in solution at $t$;
\item $n_t = N_t/N$: concentration in solution at $t$;
\item $P_t(k,r)$: number of particles in solution $k$ activated arms and degree $r$;
\item $p_t(k,r) = P_t(k,r)/N$: concentration of particles that are in solution and with $k$ activated arms and degree $r$;
\item $\mu_t(r) = \sum_{k=0}^r P_t(k,r) / N_t$: distribution of the degree of the particles in solution;
\item $\pi_t(k) = \sum_{r=0}^{\pinf} P_t(k,r) / N_t$: distribution of the number of activated arms of the particles in solution.
\item $\psi_t$: generating function of $p_t$;
\item $\phi_t(y) = \psi_t(1,y) = \frac{N_t}{N} G_{\mu_t}$;
\item $B(t) = \sum_{k,r \geq 0} k P_t(k,r)$: number of activated arms in solution;
\item $\tau_i$: $i$-th gelation time.
\end{itemize}

\paragraph{Other quantities}

\begin{itemize}
\item $CM(n,\pi\n)$: configuration model on $n$ vertices with degree distribution $\pi\n$;
\item $p_t = 1 - e^{-t}$;
\item $q_t(x,y) = (p_t x + 1 - p_t) y$;
\item $p_N = N / \a(N)^{3/2} + (\a(N)/N)^{1+\eta}$;
\item $\tK\N_{c,m,M}$: the set of all couples
\[
(n,\mu) \in \bigcup_{n \geq 1} \{ n \} \times \cM_1\n
\]
such that
\[
c N \leq n \leq N, \quad m_3^{\mu} \geq m, \quad \sum_{r \geq 0} r^{5+\eta} \mu(r) \leq M;
\]
\item $\K\N_{c,m,M}$: set of $(n,\mu) \in \tK\N_{c,m,M}$ such that additionally
\[
\frac{m^{\mu}_2}{m^{\mu}_1} \geq 1 + m.
\]
\end{itemize}

\bibliographystyle{abbrv}
\bibliography{bibli}

\end{document}